\documentclass[letterpaper,10pt]{amsart}
\usepackage{amsmath,amssymb,amsxtra,amsthm}
\usepackage{graphicx}

\theoremstyle{plain}
\newtheorem{theorem}{Theorem}[section]
\newtheorem{corollary}[theorem]{Corollary}
\newtheorem{lemma}[theorem]{Lemma}
\newtheorem{proposition}[theorem]{Proposition}

\newtheorem{conjecture}[theorem]{Conjecture}

\theoremstyle{definition}

\newtheorem{remark}[theorem]{Remark}

\newcommand{\B}{\mathbb}
\newcommand{\C}{\mathcal}
\newcommand{\F}{\mathfrak}

\newcommand{\ga}{\alpha}
\newcommand{\gb}{\beta}

\newcommand{\eps}{\varepsilon}
\newcommand{\gf}{\varphi}

\newcommand{\gl}{\lambda}

\newcommand{\gs}{\sigma}

\newcommand{\tbf}{\textbf}

\DeclareMathOperator{\Gal}{Gal}
\DeclareMathOperator{\spec}{Spec}

\begin{document}
\title[Transcendence of generating functions]{Transcendence of generating functions whose coefficients are multiplicative}
\author{Jason P.~Bell}
\author{Nils Bruin}
\author{Michael Coons}%
\address{Simon Fraser University, 8888 University Drive, Burnaby, British Columbia, V5A 1S6}
\email{jpb@sfu.ca, nbruin@sfu.ca}
\address{University of Waterloo, Dept.~of Pure Mathematics, Waterloo, Ontario, N2L 3G1}
\email{mcoons@math.uwaterloo.ca}
\thanks{The research of J.~P.~Bell and N.~Bruin is supported in part by a grant from NSERC of Canada, and the research of M.~Coons is supported by a Fields--Ontario Fellowship.}%
\subjclass[2000]{Primary 11N64; 11J91 Secondary 11B85}%
\keywords{algebraic functions, multiplicative functions, automatic sequences}%
\date{March 10, 2010}

\begin{abstract} In this paper, we give a new proof and an extension of the following result of B\'ezivin. Let $f:\B{N}\to K$ be a multiplicative
function taking values in a field $K$ of characteristic $0$ and write
$F(z)=\sum_{n\geq 1} f(n)z^n\in K[[z]]$ for its generating series.
Suppose that $F(z)$ is  algebraic over $K(z)$. Then either there is a natural number $k$ and a periodic multiplicative function $\chi(n)$ such that $f(n)=n^k \chi(n)$ for all $n$, or $f(n)$ is eventually zero.  In particular, 
$F(z)$ is either transcendental or rational.
For $K=\B{C}$, we also prove that if $F(z)$ is a $D$-finite generating series of a multiplicative function, then $F(z)$ is either transcendental or rational.
\end{abstract}

\maketitle


\section{Introduction}

In 1906, Fatou \cite{Fat1} investigated algebraic power series with integer coefficients. A power series with integer coefficients is either a polynomial or has radius of convergence of at most one. It is therefore natural to consider the special class of power series having integer coefficients which converge inside the unit disk. Fatou proved the following result.

\begin{theorem}[Fatou \cite{Fat1}]\label{fatou} If $F(z)=\sum_{n\geq 1}f(n)z^n\in\B{Z}[[z]]$ converges inside the unit disk, then either $F(z)\in\B{Q}(z)$ or  $F(z)$ is transcendental over $\B{Q}(z)$. Moreover, if $F(z)$ is rational, then each pole is located at a root of unity.
\end{theorem}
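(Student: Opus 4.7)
The plan is to prove two independent assertions: (i) algebraicity of $F(z)$ over $\B{Q}(z)$ forces $F(z)\in\B{Q}(z)$, which gives the stated rational/transcendental dichotomy; and (ii) rationality together with the convergence hypothesis forces every pole of $F(z)$ to be a root of unity.

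For (i), I would invoke the Pólya–Carlson theorem: a power series in $\B{Z}[[z]]$ with radius of convergence at least $1$ is either rational or admits the unit circle as a natural boundary. Any function algebraic over $\B{Q}(z)$ extends to a finitely sheeted branched cover of the Riemann sphere and therefore has only finitely many singularities, so the unit circle cannot be a natural boundary. The Pólya–Carlson dichotomy then forces $F(z)\in\B{Q}(z)$. If the radius of convergence strictly exceeds $1$, integrality makes the coefficients eventually zero, so $F(z)$ is a polynomial; the substantive case is radius exactly one, where Pólya–Carlson applies directly.

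For (ii), I would write $F(z)=P(z)/Q(z)$ in lowest terms over $\B{Z}$ with $Q(0)\ne 0$. The coefficient sequence $(f(n))$ satisfies a linear recurrence over $\B{Z}$ whose characteristic polynomial is the reciprocal of $Q$, and partial fractions give
\[
f(n)=\sum_i R_i(n)\,\alpha_i^n,
\]
with $R_i\in\overline{\B{Q}}[x]$ and the $\alpha_i$ the distinct reciprocals of the poles of $F(z)$. The convergence hypothesis forces $|\alpha_i|\le 1$, so that $f(n)$ grows at most polynomially. Grouping the $\alpha_i$ into Galois orbits over $\B{Q}$ and applying Kronecker's theorem — an algebraic integer whose Galois conjugates all lie in the closed unit disk is $0$ or a root of unity — each surviving $\alpha_i$, and hence each pole of $F(z)$, must be a root of unity.

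The principal technical obstacle is the algebraic-integer bookkeeping in step (ii): one needs the bound $|\alpha|\le 1$ to hold for \emph{every} Galois conjugate of each pole, not merely for the pole itself, and one must handle the content of $Q$ so that the reciprocal roots genuinely are algebraic integers. The cleanest fix is to show directly that each $\B{Q}$-irreducible factor of $Q(z)$ is cyclotomic, using that all conjugates of a pole are again poles by Galois-equivariance of the recurrence, and then combining the uniform growth bound on $f(n)$ across each orbit with Kronecker's theorem.
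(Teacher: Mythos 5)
The paper does not prove Fatou's theorem; it is stated as Theorem~1.1 and cited to Fatou \cite{Fat1}, so there is no in-paper proof for me to compare against. Evaluating your proposal on its own: the structure is sound, but the two halves sit at different levels of completeness.

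For the dichotomy, deriving Fatou from the P\'olya--Carlson theorem is logically valid (an algebraic function has only finitely many singularities, so it cannot have the unit circle as a natural boundary), but be aware this inverts the historical order and, in the context of this paper, also the expository one: Carlson's 1921 theorem is presented as \emph{strengthening} Fatou's 1906 result. Fatou's original argument is more direct. As a logical matter your reduction is fine, since Carlson's theorem has a proof independent of Fatou.

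For the pole-at-roots-of-unity claim, your structure is right and you correctly flag the two pitfalls, but the ``cleanest fix'' you propose (Galois-equivariance plus a growth bound across each orbit) still does not discharge the integrality issue you yourself raise, and that issue is the crux. What you actually need is \emph{Fatou's lemma on denominators}: if $F\in\B{Z}[[z]]$ is rational, then one can write $F=P/Q$ with $P,Q\in\B{Z}[z]$ coprime and $Q(0)=1$. Granting that normalization, the argument closes cleanly without any growth estimates: write $Q(z)=q_d\prod_i(z-\beta_i)$; the $\beta_i$ are precisely the poles of $F$ (coprimality, plus $Q\in\B{Z}[z]$ so the set of $\beta_i$ is Galois-stable), and convergence in the open unit disk gives $|\beta_i|\geq 1$ for every $i$. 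Then
\[
1=|Q(0)|=|q_d|\prod_i|\beta_i|\geq |q_d|\geq 1,
\]
forcing $|q_d|=1$ and $|\beta_i|=1$ for all $i$. Hence each $\beta_i$ is an algebraic integer all of whose conjugates lie on the unit circle, and Kronecker's theorem gives that each is a root of unity. Without the $Q(0)=\pm1$ normalization, the reciprocal roots need not be algebraic integers and Kronecker simply does not apply, so you should either cite Fatou's lemma explicitly or supply a proof of it; your current write-up names the gap but does not fill it.
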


Carlson \cite{Car1}, proving a conjecture of P\'olya, added to Fatou's theorem.

\begin{theorem}[Carlson \cite{Car1}] A series $F(z)=\sum_{n\geq 1}f(n)z^n\in\mathbb{Z}[[z]]$ that converges inside the unit disk is either rational or it admits the unit circle as a natural boundary.
\end{theorem}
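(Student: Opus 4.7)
The plan is to prove the contrapositive: if $F(z)$ extends analytically across some open arc $\gamma$ of the unit circle, then $F(z)\in\B{Q}(z)$. The strategy is the classical Hankel-determinant approach of P\'olya and Carlson.

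First, I would reformulate rationality in terms of Hankel determinants. By a classical criterion of Kronecker, $F(z)$ is rational if and only if the determinants $D_n := \det\bigl(f(i+j+1)\bigr)_{0\le i,j\le n}$ vanish for all sufficiently large $n$. Because $f(n)\in\B{Z}$, each $D_n$ is a rational integer, so either $D_n=0$ or $|D_n|\ge 1$. It therefore suffices to prove the analytic bound $|D_n|\to 0$ as $n\to\infty$; this forces $D_n=0$ for large $n$, whence rationality follows.

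Next, I would exploit the analytic continuation across $\gamma$ via Cauchy's integral formula. I would choose a contour $\Gamma$ that hugs the unit circle along its complement of $\gamma$ but bulges slightly outward into $|z|>1$ along $\gamma$; on $\Gamma$ the continued function $F$ is bounded, and $D_n$ admits an $n$-fold contour-integral representation against a polynomial kernel $P_n$. Optimising $P_n$ as a Chebyshev-type polynomial adapted to $\Gamma$, and using the fact that any proper closed subarc of the unit circle has transfinite diameter strictly less than $1$, one obtains an estimate of the shape $|D_n|\le C\,r^{n^2}$ with $0<r<1$. Combined with the integer lower bound, this forces $D_n=0$ for all large $n$, so $(f(n))_{n\ge 1}$ eventually satisfies a linear recurrence and $F(z)$ is rational.

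The main obstacle is the analytic estimate: producing a super-geometric bound $|D_n|\le C\,r^{n^2}$ with $r<1$ is genuinely potential-theoretic, resting on the fact that the complement of $\gamma$ in the unit circle has logarithmic capacity strictly less than one, combined with an extremal-polynomial argument on the deformed contour $\Gamma$. This is the technical heart of Carlson's original proof, and it is precisely where the integrality of $(f(n))$ becomes indispensable, through the trivial dichotomy that $D_n=0$ or $|D_n|\ge 1$.
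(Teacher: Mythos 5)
The paper does not actually prove this theorem: Theorem~1.2 is imported wholesale from Carlson's 1921 paper \cite{Car1} and used as a black box (indeed its only role here is, via Fatou's Theorem~1.1, to justify the quick transcendence remarks following it). So there is no in-paper proof to compare against.

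As an independent assessment: your outline is a faithful sketch of the classical P\'olya--Carlson argument. The skeleton is right: reduce rationality to the eventual vanishing of Hankel determinants $D_n$ via Kronecker's criterion, use integrality to turn the problem into proving $|D_n|<1$ for large $n$, and then obtain a super-geometric bound $|D_n|\le C r^{n^2}$ from the assumed analytic continuation across an arc. The correct heuristic for why $r<1$ is available is indeed the capacity statement you cite: a proper closed subarc of the unit circle has logarithmic capacity $\sin(\alpha/4)<1$ when its angular measure is $\alpha<2\pi$, so deleting even a small open arc drops the capacity below~$1$, which is what ultimately makes the extremal-polynomial/contour estimate beat the trivial bound. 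Two small cautions on the sketch as written. First, for $F$ to be bounded on your deformed contour $\Gamma$ you cannot literally ``hug'' the unit circle on the undeformed part --- you must stay a fixed distance $\delta>0$ inside on the arc where no continuation is assumed, and then optimize over $\delta$ and the degree of the extremal polynomial jointly; as stated, boundedness of $F$ on $\Gamma$ is not automatic. Second, the phrase ``admits the unit circle as a natural boundary'' in the theorem is the negation of ``extends across some arc,'' so your contrapositive set-up is correct, but it is worth being explicit that convergence inside the unit disk (rather than in some smaller disk) is what places the Hankel integrals on the relevant scale; without that hypothesis the integer coefficients could grow and the capacity comparison would not close. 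You correctly flag the extremal-polynomial estimate as the genuine technical content; since that is where the entire difficulty lives, the proposal should be read as a roadmap rather than a proof.
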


Recall that if $f(n)=O(n^d)$ for some $d$, the series $F(z)=\sum_{n\geq 1}f(n)z^n$ $\in\mathbb{Z}[[z]]$ has the unit circle as a natural boundary. By the two theorems above, such a series is either rational or transcendental over $\mathbb{Q}(z)$. This gives very quick transcendence results over $\B{Q}(z)$ for series $F(z)$ with $f(n)$ equal to any of the number--theoretic functions $\varphi(n), \tau(n^2), \tau^2(n), \omega(n),$ or $\Omega(n)$.
Here we follow the usual notation, where $\varphi(n)$ is the Euler totient function, $\tau(n)$ is the number of divisors of $n$, $\omega(n)$ is the number of distinct prime divisors of $n$, and $\Omega(n)$ is the number of prime divisors of $n$ counting multiplicity. 

Results like these are widely known. Indeed, Banks, Luca, and Shparlinski \cite{banks} have shown that $\sum_{n\geq 1} f(n)z^n\in\B{Z}[[z]]$ is irrational for $f(n)$ any one of $\varphi(n), \tau(n),$ $ \sigma(n),$  $\lambda(n), \mu(n), \omega(n), \Omega(n), p(n),$ or $\rho(n)$, where 
we write
$\lambda(n)=(-1)^{\Omega(n)}$ for Liouville's function, $\mu(n)$ for  the M\"obius function, and $\rho(n)=2^{\omega(n)}$ for the number of squarefree divisors of $n$. Transcendence results for some of these functions were given previously by Allouche \cite{All3, All1} and Yazdani \cite{Yaz1}. Taking into account the results of Borwein and Coons \cite{BC1} and Coons \cite{Coons1} completes the picture. Indeed, Coons \cite{Coons1} took this further by proving transcendence over $\B{F}_p(z)$ for many of these functions, after one considers a reduction mod a prime $p$.

All of the aforementioned results suggest that generating series of multiplicative functions are either rational or transcendental, but never algebraic.
S\'ark\"{o}zy \cite{Sar1} characterises multiplicative functions with rational generating series.
He shows that if $f$ is a multiplicative function from $\B{N}$ to $\B{C}$ such that $\sum_{n\geq 1}f(n)z^n$ is rational over $\B{C}(z)$ then
either $f$ is eventually zero, or there is a natural number $k$ and a periodic multiplicative function $\chi$ such that $f(n)=n^k \chi(n)$ for all $n$.

B\'ezivin \cite{Bez1} extended S\'ark\"{o}zy's results to algebraic power series as well as to a larger subset of $D$--finite power series.

\begin{theorem}[B\'ezivin \cite{Bez1}]\label{Bezivin} Let $f:\B{N}\to \B{C}$ be a multiplicative function, and suppose that its generating series $F(z)=\sum_{n\geq 1} f(n)z^n\in \B{C}[[z]]$ is algebraic over $\B{C}(z)$. Then either there is a natural number $k$ and a periodic multiplicative function $\chi:\B{N}\to \B{C}$ such that $f(n)=n^k \chi(n)$ for all $n$, or $f(n)$ is eventually zero.
\end{theorem}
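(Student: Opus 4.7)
The overall plan is to reduce to S\'ark\"ozy's theorem, quoted just above in the excerpt: it characterises the multiplicative $f$ whose generating series is rational. So it suffices to prove the implication
\[
f \text{ multiplicative and } F \text{ algebraic over } \B{C}(z) \;\Longrightarrow\; F \in \B{C}(z).
\]
With this in hand, the $f(n) = n^k \chi(n)$ or eventually-zero dichotomy is immediate.

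First I would reduce to the case of coefficients in a number field. Since $F$ satisfies some polynomial equation $P(z, F(z)) = 0$ with $P \in \B{C}[z,w]$, the field $\B{Q}(f(n) : n \in \B{N})$ has finite transcendence degree over $\B{Q}$, and a standard specialisation argument on this finitely generated extension (choosing a place that preserves both the algebraic equation and the multiplicativity identities $f(mn) = f(m)f(n)$) allows one to assume $f : \B{N} \to \bar{\B{Q}}$. This puts us in a setting where algebraic-number machinery is available.

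Next I would extract asymptotic information from algebraicity. An algebraic power series over $\B{C}(z)$ has only finitely many singularities $\ga_1, \ldots, \ga_s \in \B{C}^*$ and at each admits a Puiseux expansion with rational exponents and \emph{no} logarithmic terms. Darboux's transfer then gives an asymptotic expansion
\[
f(n) \;=\; \sum_{i,j} c_{ij}\, \ga_i^{-n}\, n^{r_{ij}} \;+\; O(R^{-n})
\]
for suitable $r_{ij} \in \B{Q}$ and some $R > \max_i |\ga_i|$; the $\ga_i$ of smallest modulus contribute the dominant term. To show $F$ is rational it is enough to force every $\ga_i$ to be a root of unity and every $r_{ij}$ to be a non-negative integer, ruling out branch points entirely.

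The crux is then multiplicativity. The identity $f(pq) = f(p) f(q)$ for distinct primes $p, q$ is the main lever: comparing the asymptotic expansion on both sides as $p, q \to \infty$ forces the dominant modulus to equal $1$ (otherwise $|\ga|^{-pq}$ does not match $|\ga|^{-p-q}$), forces each dominant $r_{ij}$ to be an integer (from comparing $(pq)^{r-1}$ with $p^{r-1} q^{r-1}$), and forces the dominant $\ga_i$ to be roots of unity (from phase matching). Subtracting off the dominant contribution and iterating on the remaining singularities collapses all Puiseux data to rational-function data. The main obstacle is that distinct dominant singularities of the same modulus can interfere destructively, making the leading term vanish on sparse sets of $n$; to apply the $f(pq)=f(p)f(q)$ matching robustly one must restrict $p, q$ to carefully chosen arithmetic progressions via Dirichlet's theorem, along which the leading term is guaranteed nonzero, and one must simultaneously bookkeep several $r_{ij}$ at each multi-branched singularity. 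Once this is carried out, $F$ is seen to have no true branch points, hence is rational, and S\'ark\"ozy's theorem finishes the proof.
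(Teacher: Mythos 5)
Your high-level plan---prove $F(z)\in\B{C}(z)$ and then invoke S\'ark\"ozy's rational-case characterisation---matches the paper, but the route you take to rationality is genuinely different from the one the authors use, and as written it has a real gap at the critical step.

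The paper reaches rationality through a chain of reductions ending in Fatou's theorem: a generalised Eisenstein lemma (Lemma~\ref{geneis}) controls denominators, a prime-number-theorem argument (Lemma~\ref{convlemma}) shows $\limsup|f(n)|^{1/n}\le 1$ for every place including archimedean, a $p$-adic maximum-modulus argument (Lemma~\ref{pfat}) bounds the coefficients at the bad primes, and then Theorem~\ref{fatou} (algebraic power series with integer coefficients analytic in the unit disk is rational) finishes the number-field case. The descent from a general characteristic-$0$ field proceeds via Noether normalisation, the Going-Up theorem, a degree bound on the resulting rational functions (Lemma~\ref{bound}), and the intersection $\bigcap\widetilde{\F{m}}=(0)$. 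You instead want to read off an asymptotic expansion from the Puiseux data at the finitely many singularities (Darboux transfer) and then use $f(pq)=f(p)f(q)$ to pin down the singularity structure. That is a different set of tools, closer in spirit to S\'ark\"ozy's and B\'ezivin's original analytic arguments.

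The gap is in your integrality step. You assert that comparing $(pq)^{r-1}$ with $p^{r-1}q^{r-1}$ forces the dominant exponent $r$ to be an integer, but $(pq)^{r}=p^{r}q^{r}$ holds \emph{identically} in $r$, so that comparison is vacuous. This is not a cosmetic issue: ruling out fractional exponents in the Puiseux expansion is precisely what distinguishes ``algebraic'' from ``rational,'' i.e.\ it is the content of the theorem. A $D$-finite---even algebraic---generating series with multiplicative coefficients can certainly look like $c\,n^{r}$ with $r\notin\B{Z}$ (e.g.\ $f(n)=n^{1/2}$ is completely multiplicative), and the constraint that kills such $f$ is not visible from the two-prime identity alone; one needs either the integrality/height information coming from Eisenstein-type denominators together with archimedean and $p$-adic growth bounds (the paper's route), or an argument in the style of Agmon and Indlekofer--K\'atai as in B\'ezivin's $D$-finite proof (reproduced in Appendix~\ref{Apx1} and Section~\ref{Df}). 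You would need to supply something of that strength here. Separately, your reduction to $\bar{\B{Q}}$-valued $f$ by ``a standard specialisation argument'' is also doing more work than you acknowledge: a single specialisation only shows that the \emph{image} of $F$ modulo one maximal ideal is rational, and recovering rationality of $F$ itself requires uniform degree control over all specialisations plus the Nullstellensatz intersection, exactly what Section~\ref{sec5} supplies. So the strategy is plausible and different in an interesting way, but as stated it does not close the two hardest steps.
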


\begin{theorem}[B\'ezivin \cite{Bez1}]\label{BezivinDF} Let $f:\B{N}\to \B{C}$ be a multiplicative function with image contained in $\B{R}$ or $\B{C}^*$, and suppose that its generating series $F(z)=\sum_{n\geq 1} f(n)z^n$ is $D$--finite. Then either there is an integer $k$ and a periodic multiplicative function $\chi:\B{N}\to \B{C}$ such that $f(n)=n^k \chi(n)$ for all $n$, or $f(n)$ is eventually zero.
\end{theorem}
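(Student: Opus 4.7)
The plan is to reduce to the algebraic case of Theorem \ref{Bezivin}. Concretely, we aim to show that, under the hypothesis that $f$ takes values either in $\B{R}$ or in $\B{C}^*$, a $D$-finite multiplicative series $F(z)$ must already be algebraic over $\B{C}(z)$. Once this is established, Theorem \ref{Bezivin} gives the claimed classification.

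First, $D$-finiteness translates into $P$-recursiveness: there exist polynomials $q_0, \ldots, q_d \in \B{C}[n]$ with $q_d \not\equiv 0$ such that
\[
q_d(n)\, f(n+d) + q_{d-1}(n)\, f(n+d-1) + \cdots + q_0(n)\, f(n) = 0
\]
for all sufficiently large $n$. The tension between this $P$-recursion (which links $f$ across windows of consecutive integers) and multiplicativity (which is entirely governed by the prime-power values $f(p^k)$) already suggests strong rigidity. The first concrete goal I would pursue is a polynomial bound $|f(p)| = O(p^k)$ on primes $p$. Here is where the hypothesis on the image of $f$ enters essentially: in the $\B{C}^*$ case, $f$ is nowhere zero, so one can pass to $\log|f|$ and study a real additive avatar of the recursion, while in the $\B{R}$ case, sign comparisons within the recurrence take the place of the logarithm argument. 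Either way, polynomial growth on primes spreads via multiplicativity to $|f(n)|=O(n^k)$ globally, so that $F(z)$ has radius of convergence $1$.

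With coefficient growth under control, I would then exploit the local structure of a $D$-finite series at a singularity $\alpha$ on $|z|=1$: the local expansion is of the form $\sum_j (z-\alpha)^{\gamma_j}\, P_j(\log(z-\alpha))\, h_j(z-\alpha)$ with algebraic exponents $\gamma_j$, polynomials $P_j$, and analytic $h_j$. The multiplicative structure of the coefficients couples $F$ at distinct roots of unity --- through the identity $\sum_{n \equiv a \,(\md\, m)} f(n)z^n = \frac{1}{m}\sum_{\zeta^m=1}\zeta^{-a}F(\zeta z)$ together with twists by Dirichlet characters, which convert multiplicative $f$ into another multiplicative series of the same type --- and these relations should force the polynomials $P_j$ to be constant (no logarithmic monodromy) and the $\gamma_j$ to be rational. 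In combination with the polynomial growth bound, this forces $F$ to be algebraic, and Theorem \ref{Bezivin} finishes the argument. The main obstacle is the first step: extracting the polynomial growth bound $|f(p)|=O(p^k)$ on primes from a $P$-recursion satisfied by a multiplicative $f$. This is precisely where the dichotomy $\B{R}$ vs.\ $\B{C}^*$ in the hypothesis is genuinely needed, and the two cases will require distinct manipulations of the recurrence; once it is in place, the subsequent elimination of logarithms and reduction to the algebraic case are comparatively standard analytic steps.
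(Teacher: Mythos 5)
Your reduction to Theorem \ref{Bezivin} cannot succeed, because the class of $D$-finite multiplicative series with image in $\B{R}$ or $\B{C}^*$ contains genuinely non-algebraic members. Note that the conclusion of Theorem \ref{BezivinDF} allows an \emph{integer} $k$, possibly negative, whereas the algebraic case (Theorem \ref{Bezivin}) yields a \emph{natural number} $k$. The completely multiplicative function $f(n)=1/n$ has image in $\B{R}_{>0}\subset\B{C}^*$, and its generating series $F(z)=-\log(1-z)=\sum_{n\geq 1}z^n/n$ satisfies $(1-z)F''(z)-F'(z)=0$, hence is $D$-finite, but is transcendental over $\B{C}(z)$. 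So the intermediate claim you are trying to establish --- that $F$ must be algebraic --- is simply false, and no growth estimate or local-singularity analysis can produce it. (Your proposed bound $|f(p)|=O(p^k)$ is also insufficient to conclude anything about algebraicity: $f(n)=1/n$ satisfies it with $k=0$, yet $F$ is not algebraic.)

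The route B\'ezivin takes --- and the one this paper follows in proving the extension Theorem \ref{mainD} --- avoids algebraicity entirely. One first shows that all singularities of a $D$-finite series with multiplicative coefficients lie at roots of unity (Lemma \ref{Bez3.5}, proved in the Appendix); then sections along arithmetic progressions (Lemma \ref{fred}) to isolate $z=1$ as the only possible boundary singularity of each piece; applies Agmon's reciprocal-coefficient lemma (Lemma \ref{agmon}) to suitably normalized pieces to extract a small-increment condition of the type $f(n+N)-f(n)\to 0$; and finally invokes the Indlekofer--K\'atai theorem (Theorem \ref{katai}) on multiplicative functions with small increments, which is precisely what yields the form $f(n)=n^{\sigma+i\tau}u(n)$ with $u$ periodic. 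The hypothesis that the image lies in $\B{R}$ or $\B{C}^*$ enters in the normalization that makes Agmon's lemma and the increment estimates applicable; it is not used, and cannot be used, to force algebraicity of $F$.
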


In this paper, we provide a new more number-theoretic proof, and slight extension, of Theorem \ref{Bezivin} as well as the extension to the complete $D$--finite case. More specifically, for algebraic generating series
 we extend the result to multiplicative functions $f$ taking values in any field of characteristic zero, and for $D$--finite generating series we remove the nonzero restriction for complex valued $f$.

\begin{theorem}\label{main} Let $K$ be a field of characteristic $0$, let $f:\B{N}\to K$ be a multiplicative function, and suppose that its generating series $F(z)=\sum_{n\geq 1} f(n)z^n\in K[[z]]$ is algebraic over $K(z)$. Then either there is a natural number $k$ and a periodic multiplicative function $\chi:\B{N}\to K$ such that $f(n)=n^k \chi(n)$ for all $n$, or $f(n)$ is eventually zero.
\end{theorem}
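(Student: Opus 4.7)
The plan is to reduce to the case $K=\B{C}$, which is exactly Theorem \ref{Bezivin}, by a specialization argument. The first task is to show that the coefficients $f(n)$ lie in some subfield $L\subseteq K$ that is finitely generated over $\B{Q}$. Let $P(z,y)\in K[z,y]$ satisfy $P(z,F(z))=0$, and let $K_0\subseteq K$ be the subfield generated over $\B{Q}$ by the finitely many coefficients of $P$. Since $F$ is algebraic, it is $D$-finite over $K_0(z)$, so the sequence $(f(n))$ satisfies a linear recurrence
$$q_0(n)f(n)+q_1(n)f(n-1)+\cdots+q_m(n)f(n-m)=0$$
whose coefficients $q_j$ are polynomials in $n$ with coefficients in $K_0$. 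The recurrence determines $f(n)$ as a $K_0$-rational function of $f(n-1),\ldots,f(n-m)$ at every $n$ with $q_0(n)\neq 0$, and $q_0$ has only finitely many integer roots. Letting $L$ be the field obtained by adjoining to $K_0$ the finitely many initial values together with the exceptional coefficients $f(n)$ for which $q_0(n)=0$, we obtain a finitely generated extension of $\B{Q}$ containing every $f(n)$ along with the coefficients of $P$.

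Since $L$ has finite transcendence degree over $\B{Q}$, I can choose a field embedding $\sigma\colon L\hookrightarrow\B{C}$. Put $\tilde f=\sigma\circ f$ and $\tilde F(z)=\sum_{n\geq 1}\tilde f(n)z^n\in\B{C}[[z]]$. Then $\tilde f$ is multiplicative because $\sigma$ is a ring homomorphism, and $\tilde F$ is algebraic over $\B{C}(z)$ because applying $\sigma$ coefficientwise to $P(z,F(z))=0$ yields a polynomial $P^\sigma\in\B{C}[z,y]$ with $P^\sigma(z,\tilde F(z))=0$. Theorem \ref{Bezivin} now applies to $\tilde f$: either $\tilde f$ is eventually zero, or $\tilde f(n)=n^k\tilde\chi(n)$ for some $k\in\B{N}$ and some periodic multiplicative $\tilde\chi\colon\B{N}\to\B{C}$.

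The last step is to transport this dichotomy back to $f$, which succeeds because $\sigma$ is injective. In the first case, $\sigma(f(n))=0$ forces $f(n)=0$, so $f$ is eventually zero. In the second, set $\chi(n)=f(n)/n^k\in L$; this is well defined and multiplicative in characteristic zero, and satisfies $\sigma(\chi(n))=\tilde\chi(n)$. Since $\tilde\chi$ has some period $T$, the identity $\sigma(\chi(n+T))=\sigma(\chi(n))$ together with injectivity of $\sigma$ gives $\chi(n+T)=\chi(n)$. The crux of the argument is step one: once the coefficients are known to lie in a finitely generated subfield, the specialization into $\B{C}$ and the appeal to Theorem \ref{Bezivin} are essentially routine.
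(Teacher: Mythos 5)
Your proof is correct, but it takes a fundamentally different and much shorter route than the paper, and the difference is worth flagging. You reduce the general case to the complex case by embedding a finitely generated subfield $L$ of $K$ into $\B{C}$, then cite B\'ezivin's Theorem~\ref{Bezivin} as a black box and pull the dichotomy back through the injective embedding. That argument is valid: once you know every $f(n)$ lies in a finitely generated extension $L$ of $\B{Q}$ (which you establish correctly via $P$-recursiveness, mirroring the paper's Lemma~\ref{lefschetz}), the embedding $L\hookrightarrow\B{C}$ exists, $\tilde f=\sigma\circ f$ is multiplicative with algebraic generating series, and the conclusion $\tilde f(n)=n^k\tilde\chi(n)$ transports back because $\sigma$ fixes $\B{Q}$ and is injective, so $\chi=f/n^k$ is a $K$-valued periodic multiplicative function.

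The difference from the paper is that the paper is not merely deriving Theorem~\ref{main} from Theorem~\ref{Bezivin}: its stated purpose is to give a \emph{new, self-contained} proof of B\'ezivin's result, which it does by a chain of reductions ($\B{Z}$-valued coefficients via Fatou's theorem, the Eisenstein-type Lemma~\ref{geneis}, and the growth estimate of Lemma~\ref{convlemma}; then $\B{Q}$-valued via $p$-adic bounds; then number fields via a normal basis; then general $K$ via Noether normalization and the Nullstellensatz). Your embedding step plays the role of Lemma~\ref{lefschetz}, but the paper cannot follow it with ``now apply B\'ezivin'' without undercutting its own contribution, so the bulk of Sections~\ref{sec2}--\ref{sec5} is real work you have outsourced. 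In short: as a derivation of Theorem~\ref{main} from Theorem~\ref{Bezivin}, your argument is clean and complete; as a replacement for the paper's proof, it begs the question. It would be worth making this dependence explicit when presenting the argument.
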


\begin{theorem}\label{mainD} Let $f:\B{N}\to \B{C}$ be a multiplicative function, and suppose that its generating series $F(z)=\sum_{n\geq 1} f(n)z^n\in \B{C}[[z]]$ is $D$--finite over $\B{C}(z)$. Then either there is an integer $k$ and a periodic multiplicative function $\chi:\B{N}\to \B{C}$ such that $f(n)=n^k \chi(n)$ for all $n$, or $f(n)$ is eventually zero.
\end{theorem}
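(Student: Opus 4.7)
My plan is to derive Theorem \ref{mainD} from B\'ezivin's Theorem \ref{BezivinDF} by reducing to the cases the latter already covers. Assume $f$ is not eventually zero, since otherwise there is nothing to prove. Theorem \ref{BezivinDF} already handles the cases when $f$ is real-valued or nowhere zero, so the only remaining case to treat is when $f:\mathbb{N}\to\mathbb{C}$ takes some non-real values and has some zeros. Note that by multiplicativity, if $f(p^a)=0$ for some prime $p$ and some $a\geq 1$, then $f(p^a m) = 0$ for all $m$ coprime to $p$, so ``some zero'' in fact forces infinitely many zeros; the zero set of $f$ has positive density.

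The central step is the following structural claim: the set $P = \{p : f(p^a) = 0 \text{ for some } a \geq 1\}$ is finite, and moreover for each $p\in P$ one has $f(p^a) = 0$ for all $a \geq 1$. Granting this, set $N = \prod_{p\in P} p$; then $f$ is supported on the monoid $M_N = \{n : \gcd(n, N) = 1\}$, and its restriction $f|_{M_N}: M_N\to\mathbb{C}^*$ is a multiplicative function on $M_N$ whose generating series $F(z) = \sum_{n\in M_N} f(n) z^n$ is D-finite. Theorem \ref{BezivinDF}, via the natural monoidal variant obtained by running B\'ezivin's argument restricted to primes outside $P$, then yields $f(n) = n^k \chi_0(n)$ for all $n\in M_N$ with $\chi_0$ periodic multiplicative on $M_N$. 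Extending by $\chi(n) = 0$ for $\gcd(n,N)>1$ gives a periodic multiplicative $\chi : \mathbb{N}\to\mathbb{C}$ with $f(n) = n^k \chi(n)$ on all of $\mathbb{N}$, as required.

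The main obstacle is the structural claim on $P$. The key tool is that D-finiteness is preserved under restriction to arithmetic progressions: for each modulus $M$ and each residue $r$, the series $\sum_{n \equiv r\pmod{M}} f(n)z^n$ is D-finite, as is $\sum_{v_p(n) = a} f(n)z^n$ for each $p,a$. Combining this with multiplicativity of $f$, I would argue as follows. If some prime $p$ had $f(p^a) = 0$ but $f(p^b)\neq 0$ for $a<b$, one obtains two D-finite subseries whose coefficients must satisfy incompatible multiplicative identities; tracking this through the finite-dimensionality of the $\mathbb{C}(z)$-span of $F$ and its derivatives forces $f$ to vanish on a density-one set, contradicting that $f$ is not eventually zero. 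A similar argument combining the subseries $\sum_{p|n} f(n)z^n$ across infinitely many primes $p\in P$ shows that $P$ must itself be finite, since otherwise inclusion--exclusion within the D-module generated by $F$ yields too many independent vanishing constraints to be consistent with the recurrence $f$ satisfies as a D-finite sequence.
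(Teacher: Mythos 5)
The central structural claim in your argument is false, and this breaks the whole reduction. You assert that if $P=\{p : f(p^a)=0\text{ for some }a\geq 1\}$, then for every $p\in P$ one has $f(p^a)=0$ for \emph{all} $a\geq 1$, so that $f$ is supported on the coprime-to-$N$ monoid. But the theorem's own conclusion already furnishes counterexamples. Take $\chi$ of period $4$ defined by $\chi(n)=1$ for $n$ odd, $\chi(n)=0$ for $n\equiv 2\pmod 4$, and $\chi(n)=c\neq 0$ for $n\equiv 0\pmod 4$. One checks directly that $\chi$ is multiplicative (the only case to verify is $m$ odd and $n\equiv 2$ or $0\pmod 4$, and then $mn\equiv 2$ or $0\pmod 4$ respectively, since $m$ is a unit mod $4$). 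Then $f(n)=n^k\chi(n)$ is multiplicative, its generating series is rational (hence $D$-finite), yet $f(2)=0$ while $f(4)=4^k c\neq 0$. So $2\in P$ but $f(2^a)\neq 0$ for $a\geq 2$, and $f$ is not supported on the odd integers. The ``monoidal variant'' reduction therefore has nothing to bite on. Even setting this aside, the final paragraph, where you propose to obtain both halves of the structural claim from ``finite-dimensionality of the $\mathbb{C}(z)$-span'' and ``inclusion--exclusion within the $D$-module,'' is not an argument; there is no mechanism indicated by which incompatible vanishing constraints would actually be produced, and B\'ezivin's own proof of the finiteness of $P$ (Lemma 3.3 of his paper, reproduced in the appendix) proceeds by an entirely different, and concrete, CRT-plus-recurrence construction. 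Likewise, invoking a ``monoidal variant'' of Theorem \ref{BezivinDF} is not a citation of a known result but a nontrivial theorem you would have to prove; the nowhere-zero and real-valued hypotheses enter B\'ezivin's argument in essential ways (for example via Agmon's reciprocal-series lemma, which requires all coefficients nonzero), and restricting the index set changes the generating function entirely.

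The paper's route is genuinely different and avoids any such structural claim. It applies Theorem \ref{BezivinDF} only to the real-valued multiplicative function $h=f\overline{f}$ (whose generating series is $D$-finite as a Hadamard product), obtaining $h(n)=n^k\omega(n)$ and hence, via Lemma \ref{p=1}, $|f(p)|^2=p^k$ for large primes $p$. In the case $k$ even it normalizes to $f_0(n)=f(n)/(n^{k/2}\sqrt{\omega(n)})$ (set to $0$ where $\omega=0$), replaces $f_0$ by a unimodular sequence $g$ (setting $g(n)=1$ where $f_0(n)=0$, noting that the correction is a rational series), and then combines Lemma \ref{Bez3.5}, Lemma \ref{fred}, Agmon's Lemma \ref{agmon}, and Propositions \ref{ghlemma}--\ref{neps} to establish a quantitative smallness $|f_0(n+M_1)-f_0(n)|=O(n^{-\eps})$, which feeds into the Indlekofer--K\'atai theorem to force $f_0$ to be periodic multiplicative. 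The case $k$ odd is then ruled out by a finite-generation argument. None of this resembles a reduction to the nowhere-zero case of B\'ezivin.
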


These theorems can be viewed in a wider context. Many results from the literature show that a rational--transcendental dichotomy holds for power series (respectively real numbers) whose coefficients (respectively base $k$ expansions) satisfy a property that is independent of being algebraic.  For example, the results of Fatou \cite{Fat1}, Carlson \cite{Car1}, and Cobham \cite{Cob1} show that power series with integer coefficients that do not grow too fast are either transcendental or rational.  More recently, Adamczewski and Bugeaud \cite{AB} showed that a real number which is both algebraic and automatic is necessarily rational. Similarly, Theorems \ref{Bezivin} and \ref{main} show that a power series whose coefficients are multiplicative is either transcendental or rational

Our proof of Theorem \ref{main} proceeds by a sequence of reductions. In Section \ref{sec2} we prove Theorem \ref{main} for $\B{Z}$--valued $f$ using Theorem~\ref{fatou}. A vital element is a generalization of a theorem of Eisenstein concerning the coefficients of the power series expansion of an algebraic function. In Sections \ref{sec3} and \ref{sec4}, we use this generalization to prove Theorem \ref{main} in the case that $K=\B{Q}$ and in the case that $K$ is a finite extension of $\B{Q}$, respectively. To complete the proof, in Section \ref{sec5}, we use the Lefschetz principle and the Nullstellensatz to prove the general case.  Finally, in Section \ref{Df} we prove Theorem \ref{mainD} which completes our characterization. 

We note that throughout our considerations, we use the fact that given a field $K$ and an extension $L$ of $K$, a power series $F(z)\in K[[z]]$ is algebraic over $K(z)$ if and only if it is algebraic over $L(z)$.

Also, in many places in this paper, we use the fact that a $D$--finite power series $F(z):=\sum_{n\geq 1}f(n)z^n$ has coefficients which are $P$--recursive \cite[Theorem 6.4.3]{Stan1}, which implies that the coefficients are contained in a finitely generated extension of $\B{Q}$.
This fact is easily established in the following way.
Suppose that $f(n)$ is $P$--recursive with polynomial recurrence $$\sum_{i=0}^d P_i(n) f(n-i)=0.$$
We let $R$ be the $\B{Q}$--algebra generated by $f(1),\ldots,f(N)$ and the coefficients of $P_0, ..., P_d$ where $N>d$ is chosen so that $P_0(n)$ is nonzero for $n>N.$ Then $f(n)$ takes values in the localization $S^{-1}R$, where $S =\{P_0(N+1),P_0(N+2),\ldots\}.$

\section{$\B{Z}$--valued multiplicative functions}\label{sec2}

In this section, we show that the generating function of a $\B{Z}$--valued multiplicative function $f$ is either rational or transcendental. To do this, we need some estimates on the growth of $f$. This is done via a generalization of a theorem of Eisenstein \cite[Problem VIII.153]{PolSze1}, who proved the following lemma in the case that $R=\B{Z}$ and $K=\B{Q}$.

\begin{lemma}\label{geneis} Let $R$ be an integral domain with field of fractions $K$. If $F(z)=\sum_{n\geq 1}f(n)z^n\in K[[z]]$ is the power series expansion of an algebraic function over $K(z)$, then there is a function $g:\B{N}\to R$ and a nonzero element $c\in R$ such that $f(n)=g(n)/c^n$ for every $n$.
\end{lemma}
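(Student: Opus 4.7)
The plan is to reduce to the case that $F$ satisfies a \emph{monic} polynomial equation over $R[z]$ and then to bound the denominators geometrically via a Newton iteration. First I would clear denominators in the minimal polynomial of $F$ over $K(z)$ to write $P(z,F)=0$ with $P(z,y)=\sum_{i=0}^{d}a_i(z)y^i\in R[z,y]$, irreducible in $K(z)[y]$ and $a_d\neq 0$. The substitution $G:=a_d F$ converts $P$ into a monic equation
$$G^d+a_{d-1}(z)G^{d-1}+a_{d-2}(z)a_d(z)G^{d-2}+\cdots+a_0(z)a_d(z)^{d-1}=0,$$
and a short bookkeeping step---inverting $a_d$ (or its nonvanishing residual factor at $z=0$ after extracting a power of $z$) in $K[[z]]$---shows that a geometric denominator bound for the coefficients of $G$ implies the desired bound for $F=G/a_d$. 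So it suffices to prove the lemma under the additional assumption that $P$ itself is monic in $y$.

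Second, with $P=y^d+\sum_{i<d}b_i(z)y^i\in R[z,y]$, $P(z,F)=0$, and $F(0)=0$, I would apply Newton's iteration $F_{k+1}=F_k-P(z,F_k)/P_y(z,F_k)$ starting from $F_0=0$. In the favorable case $\alpha:=P_y(0,0)\in R\setminus\{0\}$---that is, when $0$ is a simple root of $P(0,y)$---a direct induction shows $F_k\equiv F\pmod{z^{2^k}}$ and that the coefficients of $F_k$ have denominators dividing $\alpha^{2^k-1}$. Setting $c:=\alpha^2$ then yields $c^n f(n)\in R$ for every $n\geq 1$, which is the desired conclusion.

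The main obstacle is the degenerate case $\alpha=0$, where $0$ is a multiple root of $P(0,y)$. Here I would use the separability of $P$ (which follows from irreducibility and $\operatorname{char}K=0$), so that the discriminant $\Delta(z)\in R[z]$ and the series $P_y(z,F(z))\in K[[z]]$ are both nonzero, and invoke a Newton--Puiseux / iterated Hensel argument to isolate the branch corresponding to $F$. Concretely, substituting $y=z^r u$ with $r=\operatorname{ord}_z F$ and dividing by the appropriate power of $z$ produces a monic polynomial $\tilde P(z,u)$ satisfied by $\tilde F:=F/z^r$, whose value at $z=0$ is the leading coefficient of $F$. If this is a simple root of $\tilde P(0,u)$, the favorable case applies to $\tilde F$ and the bound transfers back to $F$; otherwise one iterates, and since $F$ has integer exponents and the Newton polygon of $P$ has only finitely many edges, the process terminates in at most $d$ steps, with only a bounded correction to $c$.
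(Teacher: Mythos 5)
Your overall engine is the same as the paper's---a Hensel/Newton lifting argument that produces a geometric bound on denominators---but the route through the degenerate case is genuinely different, and it is there that the gaps lie.

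Your favorable case is sound: with $P$ monic, $F(0)=0$ and $\alpha := P_y(0,0)\neq 0$, the coefficient-by-coefficient recursion (equivalently, Newton iteration) does give $\alpha^{2n-1}f(n)\in R$, hence $c=\alpha^2$ works. (The exact exponent $\alpha^{2^k-1}$ you state for the Newton iterates is only right for the first $2^k$ coefficients and is a bit optimistic, but the geometric conclusion is correct.) This is essentially the paper's inductive step, which after their truncation reads $a\cdot f(m+n)=\text{($R$-combination of products }f(m+i_1)\cdots f(m+i_d)\text{ with }\sum i_j<n)$ with $a=S_1(0)\neq 0$.

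The degenerate case is where you and the paper part ways, and where your sketch has real holes.

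First, the assertion that substituting $y=z^ru$ with $r=\operatorname{ord}_zF$ and dividing by the appropriate power of $z$ ``produces a monic polynomial $\tilde P(z,u)$'' is false in general. After dividing by $z^\mu$ with $\mu=\min_i(\operatorname{ord}_z b_i + ri)$, the $u^d$-coefficient of $\tilde P$ is $z^{rd-\mu}$; this equals $1$ only when $\mu=rd$, i.e.\ when the relevant Newton polygon segment hits the vertex at $i=d$. In general $\mu<rd$, so $\tilde P(0,u)$ has degree strictly less than $d$ and $\tilde P$ is not monic. Since monicity was the point of your first reduction, this reduction is not an invariant of the iteration, and the ``bound transfers back'' claim needs to be re-argued at each stage.

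Second, the iteration itself involves shifting $u\mapsto u-\tilde F(0)$ where $\tilde F(0)=f(r)\in K$ is not a priori in $R$; each such shift introduces new denominators into the coefficients of the new polynomial, which must be cleared, and the resulting cumulative contribution to $c$ is exactly the thing you are trying to bound. You wave at this with ``a bounded correction to $c$,'' but the bookkeeping is the whole content of the lemma and has to be carried out. Third, ``terminates in at most $d$ steps'' is not quite the right invariant; the correct termination argument uses that $\operatorname{ord}_z P_y(z,F(z))$ (finite by separability, since $\operatorname{char}K=0$) strictly decreases through the blow-ups, not the degree $d$.

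What the paper's truncation trick buys you is a way to bypass Newton--Puiseux entirely: replacing $F$ by the tail $G(z)=(F(z)-Q_m(z))/z^m$ for one sufficiently large $m$ already produces a relation $\sum_j S_j(z)G(z)^j=0$ in which $S_1(0)\neq 0$ and $S_j(0)=0$ for $j\geq 2$, so the Hensel step applies immediately, with no monic reduction and no blow-up iterations. The finitely many initial coefficients $f(1),\dots,f(m)$ are absorbed into a single extra factor $b$. If you want to keep your Newton--Puiseux route you can, but you must (i) drop the claim that $\tilde P$ stays monic and instead track the nonvanishing of $\tilde P_u(0,\tilde F(0))$ directly, (ii) clear the denominators arising from the shift constants $\tilde F(0)$ at each stage and record their contribution to $c$, and (iii) replace ``at most $d$ steps'' with the standard argument via $\operatorname{ord}_z P_y(z,F(z))$.
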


\begin{proof}
Pick a polynomial relation
$$P_r(z)F(z)^r+\cdots + P_0(z)\ = \ 0$$ with 
$P_0(z),\ldots ,P_r(z)\in R[z]$, not all zero, and with $r$ minimal.
For $1\leq j \leq r$, let 
\begin{equation}
F_j(z) := \sum_{i=j}^r {i\choose j} P_i(z)F(z)^{i-j}.
\end{equation}
By the minimality of $r$ the power series $F_1(z),\ldots , F_r(z)\in K[[z]]$ are nonzero and hence there exist natural numbers $n_1,\ldots , n_r$ where $n_j$ is the smallest nonnegative integer such that the coefficient of $z^{n_j}$ in $F_j(z)$ is nonzero. 

Let $$Q_n(z):=f(1)z+f(2)z^2+\cdots+f(n)z^n.$$ Then for $1\leq j\leq r$ $$\lim_{n\to\infty}\sum_{i=j}^r { i\choose j} P_i(z)Q_n(z)^{i-j}\to F_j(z),$$ where convergence is in the $(z)$--adic topology on $K[[z]]$. Hence there is a natural number $m>\max (n_1,\ldots, n_r)$ such that the coefficient of $z^{n_j}$ in the polynomial $$T_j(z):=\sum_{i=j}^r { i\choose j} P_i(z)Q_m(z)^{i-j}$$ is nonzero.
Define $$G(z):=\frac{F(z)-Q_m(z)}{z^{m}}=\sum_{n\geq 1}f(m+n)z^n.$$ Then $$F(z)=z^{m}G(z)+Q_m(z),$$
so substituting this expression into our polynomial relation we find
\begin{align*} 0&=\sum_{i=0}^r P_i(z)\left[z^{m}G(z)+Q_m(z)\right]^i\\
&=\sum_{i=0}^r P_i(z)\sum_{j=0}^i {i\choose j} z^{mj}G(z)^jQ_m(z)^{i-j}\\
&=\sum_{j=0}^r G(z)^j\left[\sum_{i=j}^r {i\choose j} z^{mj}Q_m(z)^{i-j}P_i(z)\right]\\
&=\sum_{j=0}^r z^{mj}T_j(z)G(z)^j.
\end{align*}
Note that $z^{mj}T_j(z)$ has a zero of order $mj+n_j$ at $z=0$ for $1\leq j\leq r$. Since $m>\max(n_1,\ldots,n_r)$ we have $$mr+n_r>m(r-1)+n_{r-1}>\cdots>m+n_1.$$ Letting $S_j(z):= z^{m(j-1)-n_1}T_j(z),$ we see that
\begin{equation}\label{SG0}\sum_{j=0}^r S_j(z)G(z)^j=0,\end{equation}
that the constant coefficients of $S_2(z),\ldots, S_r(z)$ are zero and that the constant coefficient of $S_1(z)$ is nonzero. Moreover, $S_0(z)\in R[z,z^{-1}]$ since it was obtained by multiplying the polynomial $T_0(z)$ by $z^{-m-n_1}$. 
We also have that
$$S_0(z)=-\sum_{j=1}^r S_j(z)G(z)^j,$$
which is a power series in $K[[z]]$, so $S_0(z)$ is, in fact, in $R[z]$.

Let $a\in R$ denote the constant coefficient of $S_1(z)$. We claim that $f(n+m)a^{n}\in R$ for all $n\in\B{N}$. We show this by induction on $n$. Looking at the coefficient of $z$ in both sides of  \eqref{SG0} gives $$a\cdot f(m+1)+b=0,$$ where $b\in R$ is the coefficient of $z$ in $S_0(z)$. Hence $a\cdot f(m+1)\in R$, proving the claim for $n=1$. Now assume that the claim is true for all natural numbers less than $n$. We look at the coefficient of $z^n$ in \eqref{SG0}. This shows that $a\cdot f(m+n)=S_1(0)f(m+n)$ is an $R$--linear combination of products of the form $$f(m+i_1)f(m+i_2)\cdots f(m+i_d)$$ where $d\leq r$, and $i_1,\ldots,i_d\geq 1$, and $i_1+i_2+\cdots+i_d<n.$ By the inductive hypothesis, $f(m+i_1)a^{i_1},\ldots,f(m+i_d)a^{i_d}\in R$, and hence $$f(m+i_1)\cdots f(m+i_d)a^{n-1}\in R,$$ whenever $i_1+i_2+\cdots+i_d<n.$ And so, $S_1(0)f(m+n)a^{n-1}\in R$ and this is equal to $f(m+n)a^n$, proving the claim.

Pick a nonzero $b\in R$ such that $f(1)b,\ldots f(m)b\in R$, then $f(n)(ab)^n\in R$ for all natural numbers $n$. Taking $c=ab$ completes the proof.
\end{proof}

For convenience, we take $\B{C}_\infty:=\B{C}$ and $|\cdot|_\infty$ to be the Euclidean norm on $\B{C}$. Note that the following lemma shows that the generating function of a $\B{C}_p$--valued multiplicative function is either transcendental or it converges in the unit disk of $\B{C}_p$, where $p\in\{2,3,5,\ldots\}\cup\{\infty\}$.

\begin{lemma}\label{convlemma} Let $p\in\{2,3,5,\ldots\}\cup\{\infty\}$ and $F(z):=\sum_{n\geq 1}f(n)z^n\in\B{C}_p[[z]]$ be the power series expansion of an algebraic function. If $f(n)$ is a multiplicative function, then $$\limsup_{n\to\infty}|f(n)|_p^{\frac{1}{n}}\leq 1.$$
\end{lemma}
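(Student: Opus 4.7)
My plan is to combine Lemma \ref{geneis} with the multiplicative structure of $f$ to control the growth of $|f(n)|_p$.

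First, I apply Lemma \ref{geneis} to extract an exponential upper bound on $|f(n)|_p$. For $p$ finite, I take $R$ to be the valuation ring $\{x \in \B{C}_p : |x|_p \leq 1\}$, whose field of fractions is $\B{C}_p$; Lemma \ref{geneis} produces a nonzero $c \in R$ and $g : \B{N} \to R$ with $f(n) = g(n)/c^n$, and since $|g(n)|_p \leq 1$ this gives $|f(n)|_p \leq |c|_p^{-n}$ and hence $\limsup_n |f(n)|_p^{1/n} \leq |c|_p^{-1}$. For $p = \infty$, algebraicity of $F$ over $\B{C}(z)$ implies $F$ has only finitely many singularities in $\B{C}$, so its power series at the origin has positive radius of convergence equal to the distance to the nearest singularity, yielding an analogous exponential bound. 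In either case, $|f(n)|_p \leq B^n$ for some $B \geq 1$.

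Next, I use multiplicativity to concentrate the growth on prime powers. For $n = \prod_i q_i^{a_i}$ with the $q_i$ distinct primes, $|f(n)|_p = \prod_i |f(q_i^{a_i})|_p \leq B^{\sum_i q_i^{a_i}}$, and the elementary inequality $\sum_i q_i^{a_i} \leq n$ (with equality iff $n$ is a prime power) shows that $|f(n)|_p^{1/n}$ is bounded strictly away from $B$ whenever $n$ has more than one distinct prime factor. Moreover this upper bound tends to $1$ as the prime factorization of $n$ becomes more spread out. Thus the genuine growth of $|f(n)|_p^{1/n}$ is captured along the prime-power subsequence, and the problem reduces to bounding $\limsup_{q^a \to \infty} |f(q^a)|_p^{1/q^a}$.

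Finally, suppose for contradiction that this limit superior exceeds $1$. I then exploit the algebraic equation $P(z, F) = 0$. In the archimedean case, the Darboux-type asymptotic expansion of coefficients of an algebraic function near its dominant singularity $\alpha$ (with $|\alpha| < 1$) gives $f(n) \sim C \alpha^{-n} n^{r-1}$; combined with $f(nm) = f(n) f(m)$ for coprime $n, m$, this forces $\alpha^{nm - n - m}$ to tend to a nonzero constant as $n, m \to \infty$ with $\gcd(n,m)=1$, which is impossible unless $|\alpha| = 1$. For the $p$-adic case, the analogous argument uses Strassman's theorem and $p$-adic Weierstrass preparation applied to $P(z, F) = 0$ to locate and analyze the dominant zero inside the $p$-adic unit disk. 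This last step---making the asymptotic/multiplicativity clash rigorous uniformly in $p \in \{2, 3, 5, \ldots\} \cup \{\infty\}$, particularly handling the possibility of several singularities of equal modulus contributing oscillating terms to the asymptotics---is where I expect the main difficulty to lie.
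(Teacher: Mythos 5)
Your first two steps mirror the paper's proof closely: the exponential bound via Lemma \ref{geneis} (taking $R$ to be the valuation ring for finite $p$, the implicit function theorem / positive radius of convergence for $p=\infty$) is exactly what the paper does, and your observation that multiplicativity forces the genuine growth onto the prime-power subsequence is also the paper's second ingredient (the paper sharpens your bound $\sum_i q_i^{a_i}\leq n$ to $a+b\leq\tfrac{2}{3}n$ for $n=ab\geq 12$ with $\gcd(a,b)=1$, which gives a quantitative gap between $(\alpha+\eps)^{2n/3}$ and $(\alpha-\eps)^n$ that the argument needs).

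The gap is in your third step, and you have flagged it correctly. Trying to derive a contradiction from Darboux-type asymptotics of $f(n)$ (or Strassman/Weierstrass preparation in the $p$-adic case) runs into precisely the obstruction you name: with several singularities of equal modulus the coefficient asymptotics carry oscillating phases that can cancel, so you cannot conclude $f(n)\sim C\alpha^{-n}n^{r-1}$ along the full sequence, and in particular you cannot substitute such an asymptotic into $f(nm)=f(n)f(m)$ to force $|\alpha|=1$. The $p$-adic analogue is at least as delicate and you give no concrete mechanism. The paper sidesteps asymptotics entirely. Since $F$ is algebraic it is $D$-finite, so the coefficients satisfy a polynomial recurrence $P_0(n)f(n)=\sum_{i=1}^r P_i(n)f(n-i)$, which yields $|f(n)|_p\leq n^k\max\{|f(n-1)|_p,\ldots,|f(n-r)|_p\}$ for all large $n$ and both archimedean and $p$-adic norms. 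Assuming $\alpha>1$, the set $S$ of $n$ with $|f(n)|_p>(\alpha-\eps)^n$ is infinite and (by your step two) eventually consists of prime powers only. Iterating the recurrence inequality backward from $n\in S$ shows that all of $n, n-1, \ldots, n-l$ with $l\leq \tfrac{\kappa}{r}\tfrac{n}{\log n}$ must satisfy $|f(\cdot)|_p>(\alpha+\eps)^{2n/3}$, hence all must be prime powers; this puts $\Omega\bigl(\tfrac{n}{\log n}\bigr)$ prime powers in an interval of length $O\bigl(\tfrac{n}{\log n}\bigr)$, contradicting the prime number theorem. That recurrence-plus-prime-counting argument is the idea your proposal is missing, and it works uniformly over all places without touching the analytic structure of the singularities.
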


\begin{proof} Denote $\ga:=\limsup_{n\to\infty}|f(n)|_p^{\frac{1}{n}}.$ If $p=\infty$, then $\ga$ is finite by the Implicit Function Theorem \cite[Theorem 2.3.1]{Kra1}.
If $p\neq \infty$, we note that the closed unit ball in $\B{C}_p$ is a ring whose field of fractions is $\B{C}_p$. Lemma \ref{geneis} implies that $f(n)=\frac{g(n)}{A^n}$ for some $\B{C}_p$--valued sequence $(g(n))_{n\geq 1}$ with $|g(n)|_p\leq 1$ and some nonzero $A\in\B{C}_p$. Then $$|f(n)|_p=\left|\frac{g(n)}{A^n}\right|_p\leq |A^{-1}|_p^n$$ and so $$\limsup_{n\to\infty}|f(n)|_p^{\frac{1}{n}}\leq |A^{-1}|_p<\infty.$$ Thus $\ga<\infty$.

Towards a contradiction, suppose $\ga>1$. Then for any $\eps>0$ there is an $N>0$ such that for all $n>N$ \begin{equation}\label{peps}|f(n)|_p<(\ga+\eps)^n\end{equation} and also there are infinitely many $n>N$ such that \begin{equation}\label{meps}(\ga-\eps)^n<|f(n)|_p.\end{equation}

Suppose $n$ is not a prime power. We write $n=ab$ with $\gcd(a,b)=1$ and $a,b>1$, so that $$|f(n)|_p=|f(ab)|_p=|f(a)f(b)|_p<(\ga+\eps)^{a+b}.$$ Since $ab=n$, we have for $n\geq 12$ that $a+b\leq \frac{2}{3}n,$ which yields $$|f(n)|_p=|f(ab)|_p<(\ga+\eps)^{\frac{2}{3}n}.$$  

Now suppose $n=q^k$ for some $k\geq 1$ and some prime $q$. Since $F(z)$ is algebraic it is $D$--finite \cite[Theorem 6.4.6]{Stan1}. Hence by \cite[Proposition 6.4.3]{Stan1}, there exist $r+1$ polynomials $P_0,P_1,\ldots,P_r\in\B{C}_p[z]$, with $P_0(z)$ not identically zero, such that $$P_0(n)f(n)=P_1(n)f(n-1)+P_2(n)f(n-2)+\cdots+P_r(n)f(n-r).$$ Furthermore, we can assume that the coefficients of the $P_i(z)$ are contained in the closed unit ball of $\B{C}_p$.

If $p=\infty$, for all $n$ sufficiently large we have that $1\leq |P_0(n)|_p$. Furthermore there is a $k$ such that $|P_i(n)|_p\leq n^k$ for each $0\leq i\leq r$. On the other hand, if $p\neq\infty$, there is a $k$ such that for all $n$ sufficiently large we have $\frac{1}{n^k}\leq |P_0(n)|_p$ and  $|P_i(n)|_p\leq 1$ for each $0\leq i\leq r$. It follows that
\begin{align*}|f(n)|_p &\leq n^{k}\cdot\max\{|f(n-1)|_p,\ldots,|f(n-r)|_p\}.\end{align*} 

Define the sets $S$ and $T$ by $$S:=\{n: |f(n)|_p>(\ga-\eps)^n\},\quad\mbox{and}\quad T:=\{n:|f(n)|_p>(\ga+\eps)^{\frac{2}{3}n}\}.$$ There is a finite set $S_0$ such that
$$S=S_0\cup \{q^k: k\geq 1, q\mbox{ prime}\}.$$

For each $n\in S$ and $0\leq i_1,i_2,\ldots\leq d,$ we have $$|f(n-i_1)|_p>\frac{(\ga-\eps)^n}{n^{k}},$$
and $$|f(n-i_1-i_2)|_p>\frac{(\ga-\eps)^n}{n^k(n-i_1)^k}>\frac{(\ga-\eps)^n}{(n^{k})^2}.$$
and in general for any $l$ that
$$|f(n-i_1-i_2-\cdots-i_l)|_p\geq \frac{(\ga-\eps)^n}{(n^{k})^l}.$$
For large enough $n$ we have $\frac{(\ga-\eps)^n}{n^{kl}}>(\ga+\eps)^{\frac{2}{3}n}$ and hence $$|f(n-i_1-i_2-\cdots-i_l)|_p>(\ga+\eps)^{\frac{2}{3}n}.$$
If $\frac{(\ga-\eps)^n}{n^{kl}}>(\ga+\eps)^{\frac{2}{3}n}$, then $n-i_1-i_2-\cdots-i_l\in T$ and hence is a prime power.

Considering the inequality, we have \begin{align*}\frac{(\ga-\eps)^n}{n^{kl}}>(\ga+\eps)^{\frac{2}{3}n} &\Longleftrightarrow \left[\frac{\ga-\eps}{(\ga+\eps)^\frac{2}{3}}\right]^n>n^{kl}\\
&\Longleftrightarrow n\log\left(\frac{\ga-\eps}{(\ga+\eps)^\frac{2}{3}}\right) > lk\log n. \end{align*} Since $\ga>1$ this tells us that there is a $c_0>0$ so that
$$c_0n>lk\log n$$
and
$$l<\frac{c_0n\cdot k}{\log n}=\frac{\kappa}{r} \frac{n}{\log n}$$
where we define $\kappa$ by $c_0kr=\kappa$. This implies that in the interval $$\left[n-\frac{\kappa n}{\log n}, n\right]$$ there are $\frac{\kappa}{r} \frac{n}{\log n}$ prime powers. This contradicts the prime number theorem.
\end{proof}

The following lemma is used in the characterization of multiplicative functions with rational generating functions. 

\begin{lemma}\label{declension} Suppose that $f:\B{N}\to\B{C}$ is a multiplicative function which for large enough $n$ is given by $f(n)=n^k\chi(n)$, where $\chi:\B{N}\to\B{C}$ is a periodic multiplicative function (not identically 0). Then $f(n)=n^k\chi(n)$ for all $n\in\B{N}$.
\end{lemma}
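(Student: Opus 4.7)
The plan is to recover the identity $f(n) = n^k \chi(n)$ for the finitely many small $n$ not yet covered by comparing two expressions for $f(pn)$, with $p$ a well-chosen large prime. Let $N_{0}$ be a threshold past which the hypothesis $f(m) = m^{k}\chi(m)$ holds, and let $N$ be a period of $\chi$. Since $\chi$ is multiplicative and not identically zero, $\chi(1) = 1$; by periodicity $\chi(r) = 1$ for every $r \equiv 1 \pmod{N}$. Dirichlet's theorem on primes in arithmetic progressions then supplies infinitely many primes $p \equiv 1 \pmod{N}$, and for each such prime $\chi(p) = 1 \neq 0$.

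Given any $n \in \B{N}$, I would pick such a prime $p$ with $p > \max(n, N_{0})$. Then $\gcd(p, n) = 1$, and both $p$ and $pn$ exceed $N_{0}$, so multiplicativity of $f$ at the coprime pair $(p, n)$ together with the hypothesis applied at $p$ and at $pn$ gives
\begin{equation*}
p^{k} f(n) = f(p) f(n) = f(pn) = (pn)^{k} \chi(pn) = p^{k} n^{k} \chi(n),
\end{equation*}
where multiplicativity of $\chi$ at the coprime pair $(p, n)$ has been used in the last equality and $\chi(p) = 1$ has been used in the first. Cancelling the nonzero factor $p^{k}$ yields $f(n) = n^{k} \chi(n)$.

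The only step that requires any thought is producing a prime $p$ at which $\chi$ does not vanish, for otherwise the cancellation above collapses; the observation that $\chi \equiv 1$ on the residue class $1 \bmod N$ reduces this immediately to Dirichlet's theorem, sidestepping the potentially awkward question of the vanishing set of $\chi$ on the primes. Everything else is a direct manipulation of the multiplicative relations satisfied by $f$ and $\chi$.
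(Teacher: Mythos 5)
Your proof is correct and follows the same overall strategy as the paper: pick a large prime $p$ coprime to $n$ with $\chi(p)\neq 0$, apply the hypothesis at $p$ and at $pn$, and use multiplicativity of $f$ and $\chi$ to transfer the identity down to $n$. Where you differ is in how you produce such a prime. The paper argues by contradiction from a maximal counterexample $n_0$, and it obtains a suitable prime by a soft density argument: if $\chi$ vanished at all but finitely many primes, then $\{n : \chi(n)\neq 0\}$ would have density zero, impossible for a periodic function that is not identically zero. You instead observe that $\chi(1)=1$ forces $\chi\equiv 1$ on the residue class $1\bmod N$ and invoke Dirichlet's theorem on primes in arithmetic progressions to find a prime $p\equiv 1\pmod N$ with $p>\max(n,N_0)$, giving $\chi(p)=1$ directly. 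Your route is a bit more explicit and even buys a slight simplification (you know $f(p)=p^k$ exactly, not just that it is nonzero), at the cost of appealing to Dirichlet, which is a heavier hammer than the elementary density observation the paper uses. Both arguments are sound, and the rest of the manipulation is identical.
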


\begin{proof} Suppose not. Then there is a maximal $n_0$ for which $f(n)\neq n^k\chi(n)$. Note that there are infinitely many primes $p$ for which $\chi(p)\neq 0$, otherwise the set of natural numbers $n$ for which $\chi(n)\neq 0$ would have zero density, and this is not possible for a periodic function that is not identically zero. Thus we can pick a prime $p>n_0$ such that $f(p)=p^k\chi(p)\neq 0$. Then $$f(n_0)=\frac{f(pn_0)}{f(p)}=\frac{(pn_0)^k\chi(pn_0)}{p^k\chi(p)}=n_0^k\chi(n_0),$$ which is a contradiction.
\end{proof}

To give the main result of this section, we will use Theorem \ref{fatou} of Fatou and the following result of S\'ark\"ozy.

\begin{theorem}[S\'ark\"ozy \cite{Sar1}]\label{ratC} Let $f:\B{N}\to\B{C}$ be a multiplicative function and suppose that $F(z)=\sum_{n\geq 1}f(n)z^n\in\B{C}[[z]]$ is rational over $\B{C}(z)$. Then either $f(n)$ is eventually zero or there is a nonnegative integer $k$ and a periodic multiplicative function $\chi$ such that $f(n)=n^k\chi(n).$
\end{theorem}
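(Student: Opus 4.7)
The plan is to exploit the fact that the coefficients of a rational power series eventually satisfy a linear recurrence with constant coefficients. Concretely, there exist a threshold $N$, distinct nonzero $\alpha_1,\ldots,\alpha_s\in\B{C}$, and nonzero polynomials $Q_1,\ldots,Q_s\in\B{C}[x]$ with
$$f(n)=\sum_{i=1}^{s} Q_i(n)\,\alpha_i^n\qquad(n>N).$$
The whole argument consists of squeezing this exponential-polynomial description against the functional equation $f(mn)=f(m)f(n)$ for coprime $m,n$.

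First I would pin down the moduli $|\alpha_i|$. Using $f(pq)=f(p)f(q)$ for distinct primes $p,q>N$ and comparing growth, one has $\rho^{pq}\asymp \rho^{p+q}$ (up to polynomial factors), where $\rho=\max_i|\alpha_i|$; since $pq\gg p+q$, this forces $\rho=1$, and an analogous liminf argument (valid as soon as $f$ is not eventually zero) forces $\min_i|\alpha_i|=1$ as well. Next I would show each $\alpha_i$ is a root of unity: viewing the identity
$$\sum_k Q_k(pq)\alpha_k^{pq}=\Bigl(\sum_i Q_i(p)\alpha_i^p\Bigr)\Bigl(\sum_j Q_j(q)\alpha_j^q\Bigr)$$
as an exponential polynomial in $q$ (with $p$ a fixed large prime) and matching characteristic roots on both sides, the finite set $\{\alpha_1,\ldots,\alpha_s\}$ must be stable under $\alpha\mapsto\alpha^p$ for infinitely many primes $p$; together with $|\alpha_i|=1$ and the Galois action on cyclotomic extensions, this forces each $\alpha_i$ to be a root of unity.

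Once the $\alpha_i$ are roots of unity, there is a single period $T$ and polynomials $R_0,\ldots,R_{T-1}\in\B{C}[x]$ with $f(n)=R_{n\bmod T}(n)$ for $n>N$. Applying multiplicativity along prime powers --- say $f(p^aq^b)=f(p^a)f(q^b)$ for large coprime prime powers lying in prescribed residue classes modulo $T$, and varying $a,b$ --- one compares leading terms to deduce that all the $R_r$ share a common leading behaviour $c_r\,n^k$ for one nonnegative integer $k$. Defining $\chi(n):=f(n)/n^k$, the function $\chi$ is $T$-periodic for $n>N$, and multiplicativity of $f$ transfers to multiplicativity of $\chi$ on that range. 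Finally, Lemma \ref{declension} promotes the identity $f(n)=n^k\chi(n)$ from ``eventually'' to ``for all $n\in\B{N}$''.

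The main obstacle is the third step: converting the exponential-polynomial form back into a clean $n^k\chi(n)$. The danger is that different residue classes $r\pmod T$ could a priori produce polynomials $R_r$ of different degrees or with incompatible leading coefficients, and it is precisely the multiplicative cross-relations between different residue classes --- combined with the Chinese remainder structure arising from pairing coprime prime powers --- that must be exploited to synchronise these polynomials into a single uniform factor $n^k$ times a genuinely periodic, multiplicative character $\chi$.
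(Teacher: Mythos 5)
The paper does not prove Theorem \ref{ratC}; it cites S\'ark\"ozy \cite{Sar1} and uses the result as a black box, so there is no in-paper proof to compare against. Your overall plan --- represent $f(n)=\sum_i Q_i(n)\alpha_i^n$ for $n$ large and play this off against $f(mn)=f(m)f(n)$ for coprime $m,n$ --- is the natural one, but two of the three substantive steps are not justified as written.

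The growth argument in your step two is a heuristic: you treat $|f(pq)|$ as comparable to $\rho^{pq}$, but the exponential polynomial $\sum_i Q_i(n)\alpha_i^n$ can cancel severely at individual $n$, so one needs a genuine lower bound $|f(n)|\gg\rho^{n(1-\varepsilon)}$ on a large set of $n$ (compare the density/prime-number-theorem work in Lemma \ref{convlemma}). Your remark that $\min_i|\alpha_i|=1$ ``by a liminf argument'' is even harder to substantiate. Fortunately neither is needed: once you establish root-stability in step three, $|\alpha_i|\neq 1$ is automatically excluded, since $\alpha^p$ would then take infinitely many distinct values as $p$ ranges over large primes, which cannot all lie in the finite set $\{\alpha_1,\ldots,\alpha_s\}$.

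The real gap is in step three. You ``match characteristic roots'' on both sides of $f(pn)=f(p)f(n)$, but that identity holds only for $n$ coprime to $p$, a set of density $(p-1)/p<1$, and two distinct exponential polynomials can agree on such a set (e.g.\ the constant $1$ and $(-1)^n$ agree on all even $n$). To repair this you must restrict to a single residue class $n=r+pm$ with $\gcd(r,p)=1$ and compare exponential polynomials in $m$: the left-hand side has characteristic roots $\alpha_k^{p^2}$ and the right-hand side $\alpha_j^p$, and for all primes $p$ large enough that these are pairwise distinct and $f(p)\neq 0$, the uniqueness of exponential-polynomial representations forces $\{\alpha_k^{p^2}\}=\{\alpha_j^p\}$. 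Then $x\mapsto x^p$ permutes the finite set $\{\alpha_j^p\}$, so every element has a finite forward orbit, hence is a root of unity; your appeal to ``the Galois action on cyclotomic extensions'' is not needed. Finally, the synchronisation you flag in step four is handled by the two-variable identity $R_{ab}(mn)=R_a(m)R_b(n)$ (valid as a polynomial identity because the coprimality condition on $(m,n)$ is positive density): since the left side has no mixed monomials $m^in^j$ with $i\neq j$, each $R_r$ must be a pure monomial $c_rn^k$; taking $m\equiv 1\pmod T$ then gives the functional equation $R_b(mn)=m^kR_b(n)$, which forces the same form on residues $b$ not coprime to the period $T$. With those two repairs --- the arithmetic-progression restriction before matching roots, and the mixed-monomial argument for monomiality --- your outline does close, and Lemma \ref{declension} finishes, as you say.
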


\begin{corollary}\label{AlgZ} Let $f:\B{N}\to\B{Z}$ be a multiplicative function and suppose that $F(z)=\sum_{n\geq 1}f(n)z^n\in\B{Z}[[z]]$ is the power series expansion of a function that is algebraic over $\B{Q}(z)$. Then either $f(n)$ is eventually zero or there is a nonnegative integer $k$ and a periodic multiplicative function $\chi$ such that $f(n)=n^k\chi(n).$
\end{corollary}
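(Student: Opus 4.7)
The plan is to chain together three results from earlier in the paper: the growth bound in Lemma~\ref{convlemma}, Fatou's dichotomy (Theorem~\ref{fatou}), and S\'ark\"ozy's classification of rational generating series of multiplicative functions (Theorem~\ref{ratC}). The idea is that $\B{Z}$-valued multiplicative algebraic series are forced to have subgeometric coefficient growth, hence to lie in the convergent-unit-disk setting where Fatou's theorem applies, and rationality then feeds directly into S\'ark\"ozy's theorem.

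First, since $F(z)\in\B{Z}[[z]]\subseteq\B{C}[[z]]$ is algebraic over $\B{Q}(z)$ and hence over $\B{C}(z)$, I would invoke Lemma~\ref{convlemma} with $p=\infty$ to obtain $\limsup_{n\to\infty}|f(n)|^{1/n}\leq 1$. In particular, $F(z)$ defines an analytic function inside the open unit disk of $\B{C}$.

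Next, Fatou's theorem (Theorem~\ref{fatou}) applies to $F(z)\in\B{Z}[[z]]$ converging inside the unit disk and yields the dichotomy that $F(z)$ is either rational over $\B{Q}(z)$ or transcendental over $\B{Q}(z)$. Since $F(z)$ is algebraic by assumption, this forces $F(z)\in\B{Q}(z)$. Finally, with rationality in hand, I apply S\'ark\"ozy's Theorem~\ref{ratC} to the rational multiplicative generating series $F(z)$; this immediately delivers the conclusion that either $f(n)$ is eventually zero, or there exist a nonnegative integer $k$ and a periodic multiplicative function $\chi$ with $f(n)=n^k\chi(n)$.

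No real obstacle arises: the proof is a short citation chain, with the only mild verification being that Lemma~\ref{convlemma}, stated for $\B{C}_p$-valued coefficients, applies here by viewing $\B{Z}\subseteq\B{C}=\B{C}_\infty$ and by using the standard fact (emphasized in the introduction) that algebraicity over $\B{Q}(z)$ is equivalent to algebraicity over $\B{C}(z)$ for power series with rational coefficients.
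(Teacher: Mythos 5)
Your proof is correct and follows the paper's argument exactly: apply Lemma~\ref{convlemma} with the Euclidean norm to get analyticity in the open unit disk, invoke Fatou's Theorem~\ref{fatou} to conclude rationality, and finish with S\'ark\"ozy's Theorem~\ref{ratC}. Nothing to add.
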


\begin{proof} By Lemma \ref{convlemma}, using the Euclidean norm, $F(z)=\sum_{n\geq 1}f(n)z^n\in\B{Z}[[z]]$ is analytic in the open unit disk. Using Theorem \ref{fatou}, we see that $F(z)$ is rational. The result follows from Theorem \ref{ratC}.
\end{proof}

\section{$\B{Q}$--valued multiplicative functions}\label{sec3}

In this section, we consider the case $K=\B{Q}$ of Theorem \ref{main} via a reduction to the $\B{Z}$--valued case handled in Section \ref{sec2}.

\begin{lemma}\label{pfat} Let $F(z):=\sum_{n\geq 1} f(n) z^n\in\B{Q}[[z]]$ be the power series expansion of an algebraic function over $\B{Q}(z)$ and $p$ be finite. If $F(z)$ converges in $\B{C}_p$ for $|z|_p<1$, then $|f(n)|_p$ is uniformly bounded.
\end{lemma}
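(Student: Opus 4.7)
The plan is to combine Lemma~\ref{geneis} with the polynomial relation defining $F$ to pin down the $p$-adic size of the coefficients. First, I apply Lemma~\ref{geneis} with $R=\B{Z}$ and $K=\B{Q}$ to write $f(n)=g(n)/c^n$ with $g\colon\B{N}\to\B{Z}$ and $c\in\B{Z}\setminus\{0\}$. When $p\nmid c$, the bound $|f(n)|_p=|g(n)|_p\leq 1$ is immediate, so I focus on the case $p\mid c$.

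The key intermediate step will be to bound $|F(z_0)|_p$ uniformly over $z_0$ in the $p$-adic open unit disk. Once this is in hand, the conclusion follows from the non-archimedean Gauss-norm identity
\[
\sup_{|z|_p<1}|F(z)|_p\;=\;\sup_n|f(n)|_p,
\]
which holds because on the circle $|z|_p=r$ the supremum of $|F(z)|_p$ equals $\max_n|f(n)|_p r^n$, and $r^n\to 1$ as $r\to 1^-$.

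To produce the uniform bound on $|F(z_0)|_p$ I would use a polynomial relation $P(z,y)=\sum_{i=0}^r P_i(z)y^i\in\B{Z}[z,y]$ satisfied by $F$. For $|z_0|_p<1$ each $P_i(z_0)$ lies in $\B{Z}_p$, so the Newton polygon of $P(z_0,y)$ in $y$ has all its vertices on or above the horizontal axis; the $p$-adic valuations of the roots of $P(z_0,y)$ are then determined by its slopes. The value $F(z_0)$ is one of these roots, and since $F$ is $p$-adic analytic on the open unit disk with $F(0)=0$, it tracks the branch corresponding to the root cluster singled out by the simple root $y=0$ of $P(0,y)$, which enjoys a uniform bound in terms of the polynomial $P$.

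The main obstacle is this branch-tracking step when the leading coefficient $P_r(z)$ vanishes at some $z_0$ with $|z_0|_p<1$: at such points the Newton polygon of $P(z_0,y)$ degenerates and several branches of the algebraic function escape to infinity, so naive Cauchy-type bounds on the roots of $P(z_0,y)/P_r(z_0)$ fail. The hypothesis that $F$ converges on $|z|_p<1$ is crucial here, since it forces the $F$-branch itself to remain finite on the whole disk; making this quantitative, for instance by a $p$-adic Weierstrass preparation applied to $P_r(z)$ or by an analytic-continuation argument tracking the $F$-branch outward from a small neighbourhood of $0$, will be the technical heart of the proof.
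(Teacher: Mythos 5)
Your plan and the paper's share the same two-step skeleton: bound $|F|_p$ near the boundary of the disk via the polynomial relation $P(z,F(z))=0$, then use a non-archimedean maximum-modulus / Gauss-norm identity to transfer that bound to the individual coefficients $f(n)$. But you have explicitly left the central step open. You note, correctly, that the difficulty is handling points $z_0$ close to the unit circle where the leading coefficient $P_r(z_0)$ is small or vanishes, and you say resolving this ``will be the technical heart of the proof'' --- but you do not resolve it. The Newton-polygon / branch-tracking strategy you sketch is much harder than what is actually needed and is not carried out.

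The paper's resolution is simple and worth knowing: rather than trying to control $F$ on the entire open disk, it removes small $p$-adic disks around \emph{all} zeros of $P_0(x)\cdots P_k(x)$ from the closed unit ball, obtaining a set $X$ on which every coefficient polynomial $P_i$ is bounded both above and below by positive constants $c_1,c_2$. Since there are only finitely many such zeros, $X$ still contains points $\lambda$ with $|\lambda|_p$ arbitrarily close to $1$, and on $X$ the crude estimate $|P_k(\lambda)F(\lambda)^k|_p=|\sum_{i<k}P_i(\lambda)F(\lambda)^i|_p$ already forces $|F(\lambda)|_p\le\max(c_1c_2^{-1},1)$. No branch-tracking, Newton polygons, or Weierstrass preparation is required. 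A second, smaller gap: the identity $\sup_{|z|_p<1}|F(z)|_p=\sup_n|f(n)|_p$ is asserted but not justified; the paper handles this elementarily by choosing $|\lambda|_p$ outside the countable set of radii that could make two nonzero terms $|f(n)\lambda^n|_p$ coincide, so that the strong triangle inequality gives exact equality $|F(\lambda)|_p=\max_n|f(n)\lambda^n|_p$, and then lets $|\lambda|_p\to 1$ along such radii. You should adopt both of these devices to turn your outline into a proof.
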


\begin{proof} Since $F(z)$ is algebraic, we have $P(z,F(z))=0$ for some polynomial $$P(x,y)=\sum_{j=0}^k P_j(x)y^j\in\B{Z}[x,y].$$ Let $\{\ga_1,\ldots,\ga_m\}$ denote the set of zeros of $P_0(x)\cdots P_k(x)$. We can pick $\eps>0$ 
such that the subset $X$ of $\B{C}_p$ defined by $$X:=\overline{B_{\B{C}_p}(0,1)}\backslash \bigcup_{j=1}^m B_{\B{C}_p}(\ga_j,\eps),$$ has the property that $\{|x|_p:x\in X\}\supset \left(\frac{1}{2},1\right).$

The polynomials $P_0(x),\ldots,P_k(x)$ are uniformly bounded on $X$ because $X$ itself is bounded. Since every $x\in X$ has $|x|_p\leq 1$, we see that there is a positive constant $c_1$ such that for all $x\in X$ we have  \begin{equation}\label{ubP}|P_0(x)|_p,\ldots,|P_k(x)|_p\leq c_1.\end{equation} Furthermore, each $P_i(x)$ can be written in the form $P_i(x)=a(x-\gb_1)\cdots (x-\gb_d),$ with $\{\gb_1,\ldots,\gb_d\}\subseteq \{\ga_1,\ldots,\ga_m\}$. Moreover, if $x\in X$ then $|x-\ga_j|_p\geq \eps$, whence there is a positive constant $c_2$ such that \begin{equation}\label{lbP}|P_0(x)|_p,\ldots,|P_k(x)|_p\geq c_2.\end{equation}  Let $$S:= \bigcup_{\substack{\{n:f(n)\neq 0\}\\ m\neq n}}\left\{r:r^{n-m}=|f(m)|_p/|f(n)|_p\right\}.$$ In particular, $S$ is countable.

Thus we can find a sequence $\{\gl_j\}\subseteq X$ with $|\gl_j|_p\notin S$ such that $|\gl_j|_p\uparrow 1$. If $|\gl|_p<1$ and $|\gl|_p\notin S$ then all nonzero terms of the sequence $\{|f(n)\gl^n|_p\}$ are distinct, and $$|f(n)\gl^n|_p\to 0.$$ Hence \begin{equation}\label{star}|F(\gl)|_p=\left|\sum_n f(n)\gl^n\right|_p=\max_n\left\{|f(n)\gl^n|_p\right\}.\end{equation} 

Also, if $\gl\in X$, then $$c_2\leq|P_0(\gl)|_p,\ldots,|P_k(\gl)|_p\leq c_1$$ and so \begin{align*} |P_k(\gl)F(\gl)^k|_p &= |P_{k-1}(\gl)F(\gl)^{k-1}+\cdots+P_0(\gl)|_p\\
&\leq \max_{0\leq i\leq k-1}|P_i(\gl)|_p|F(\gl)|_p^i\\
&\leq c_1\cdot\max (|F(\gl)|_p^{k-1},1).\end{align*} But $|P_k(\gl)|_p\geq c_2,$ and so $|F(\gl)|_p^k\leq c_1c_2^{-1}\max(|F(\gl)|_p^{k-1},1)$. Thus \begin{equation}\label{dagger}|F(\gl)|_p\leq \max(c_1c_2^{-1},1)\end{equation} for $\gl\in X$. 

Hence if $|\gl|_p\notin S$, we have $|f(n)|_p|\gl|_p^n\leq \max(c_1c_2^{-1},1)$ by combining \eqref{star} and \eqref{dagger}. In particular, using our sequence $\{\gl_j\}$ we see $|f(n)|_p|\gl_j|_p^n\leq \max(c_1c_2^{-1},1)$ for all $j$. Since $|\gl_j|_p\to 1$, we have $|f(n)|_p\leq \max(c_1c_2^{-1},1)$ and so the $|f(n)|_p$ are uniformly bounded.
\end{proof}

\begin{theorem} Let $f:\B{N}\to\B{Q}$ be a multiplicative function, $F(z):=\sum_{n\geq 1} f(n) z^n\in\B{Q}[[z]]$, and suppose that $F(z)$ is algebraic over ${\B{Q}(z)}$. Then either $f(n)$ is eventually zero or there is a nonnegative integer $k$ and a periodic multiplicative function $\chi$ such that $f(n)=n^k\chi(n).$
\end{theorem}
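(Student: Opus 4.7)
The plan is to reduce to the rational case of Fatou's Theorem \ref{fatou} combined with S\'ark\"ozy's Theorem \ref{ratC} by clearing denominators uniformly. The key point to establish first is that there exists a single nonzero integer $N$ with $Nf(n)\in\B{Z}$ for every $n$, despite the fact that $Nf$ need not be multiplicative.

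To produce such an $N$, I would combine the Eisenstein-type Lemma \ref{geneis} with the $p$-adic estimates of Lemmas \ref{convlemma} and \ref{pfat}. Applying Lemma \ref{geneis} with $R=\B{Z}$ yields a representation $f(n)=g(n)/c^n$ with $g:\B{N}\to\B{Z}$ and $c$ a fixed nonzero integer; in particular, for every prime $p\nmid c$ we have $f(n)\in\B{Z}_p$, so the denominator of $f(n)$ in lowest terms involves only the finitely many primes dividing $c$. For each such prime $p\mid c$, Lemma \ref{convlemma} gives $\limsup_{n\to\infty}|f(n)|_p^{1/n}\leq 1$, so $F(z)$ converges in the $p$-adic open unit disk of $\B{C}_p$; then Lemma \ref{pfat} supplies a uniform bound $|f(n)|_p\leq M_p$, and hence a uniform lower bound $v_p(f(n))\geq -e_p$ for some fixed $e_p\geq 0$. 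Setting $N:=\prod_{p\mid c}p^{e_p}$ forces $Nf(n)\in\B{Z}_p$ for every prime $p$, and therefore $Nf(n)\in\B{Z}$ for all $n$.

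With $N$ in hand, consider $NF(z)=\sum_{n\geq 1}Nf(n)z^n\in\B{Z}[[z]]$. This series is algebraic over $\B{Q}(z)$, and by Lemma \ref{convlemma} at the archimedean place it has radius of convergence at least one in $\B{C}$. Fatou's Theorem \ref{fatou} then forces $NF(z)$ to be either rational or transcendental; since it is algebraic it must be rational, whence $F(z)$ itself lies in $\B{Q}(z)$. Applying S\'ark\"ozy's Theorem \ref{ratC} to the now-rational generating series of the multiplicative function $f$ yields the desired dichotomy.

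The only nontrivial step is the construction of $N$, and the main work there has already been deposited in Lemmas \ref{geneis} and \ref{pfat}; the rest of the argument is an assembly. The single point requiring care is the reliance on multiplicativity via Lemma \ref{convlemma} to guarantee that $F(z)$ converges in the $p$-adic open unit disk for every finite $p$ simultaneously, which is precisely what activates Lemma \ref{pfat} and lets us promote the Eisenstein bound $f(n)=g(n)/c^n$ to a uniform denominator.
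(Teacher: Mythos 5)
Your proposal is correct and follows essentially the same route as the paper: Lemma \ref{geneis} to localize the possible denominators to the primes dividing $c$, Lemma \ref{convlemma} to activate the hypothesis of Lemma \ref{pfat} at each such prime, a uniform $N$ clearing denominators, and then Fatou plus S\'ark\"ozy on $NF(z)$. The only cosmetic difference is that you phrase the bound via $p$-adic valuations $v_p(f(n))\geq -e_p$, whereas the paper speaks of uniform boundedness of $|g(n)/A^n|_p$; these are the same thing.
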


\begin{proof} There is a $P(z,y)\in\B{Z}[z,y]$ such that $P(z,F(z))=0$. By Lemma \ref{geneis} there is a natural number $A$ such that $$F(z)=\sum_{n\geq 1}\frac{g(n)}{A^n}z^n$$ where $g(n)\in\B{Z}$ for each $n$. 

Let $p$ be a prime. We consider $F(z)\in\B{Q}[[z]]\subset\B{C}_p[[z]]$. By Lemma \ref{convlemma} $$\limsup_{n\to\infty} \left|\frac{g(n)}{A^n}\right|_p^{\frac{1}{n}}\leq 1.$$ Notice that if $p\nmid A$, then $|g(n)/A^n|_p\leq 1$, and if $p|A$ then by Lemma \ref{pfat}, we have that $|g(n)/A^n|_p$ is uniformly bounded. Consequently there exists a natural number $N$ such that $N\cdot F(z)$ is a power series with integer coefficients; moreover, $N\cdot F(z)$ is algebraic. By using Lemma \ref{convlemma} with norm $|\cdot|_\infty$, we see that $N\cdot F(z)$ is analytic in the open unit disk of $\B{C}$. Theorem \ref{fatou} implies that $F(z)$ is rational. The result now follows from Theorem \ref{ratC}.
\end{proof}

\section{Finite extensions of $\B{Q}$}\label{sec4}

Taking the results of the previous sections further, we now prove Theorem \ref{main} in the case that $K$ is a finite extension of $\B{Q}$.

\begin{theorem}\label{finext} Let $K$ be a finite extension of $\B{Q}$ and $f:\B{N}\to K$ be a multiplicative function. If $F(z):=\sum_{n\geq 1}f(n)z^n\in K[[z]]$ is the power series expansion of an algebraic function over $K(z)$. Then either $f(n)$ is eventually zero or there is a nonnegative integer $k$ and a periodic multiplicative function $\chi:\B{N}\to K$ such that $f(n)=n^k\chi(n).$
\end{theorem}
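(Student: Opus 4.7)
The plan is to show that $F(z)$ already lies in $K(z)$, at which point Theorem \ref{ratC} of S\'ark\"ozy yields the desired dichotomy. The reduction is by Fatou's theorem (Theorem \ref{fatou}) applied coordinate-by-coordinate, after clearing the denominators of $f(n)$ at every place of $K$.

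First I would uniformly bound $|f(n)|_v$ at every finite place $v$ of $K$. By Lemma \ref{geneis} applied with $R=\mathcal{O}_K$, there is a nonzero $A\in\mathcal{O}_K$ with $A^nf(n)\in\mathcal{O}_K$ for all $n$, which already yields $|f(n)|_v\leq 1$ at every finite place $v$ not dividing $A$. At each of the finitely many remaining finite places $v\mid A$, fix an embedding $K\hookrightarrow\B{C}_p$ realizing $v$, where $p$ is the residue characteristic of $v$. Then $F(z)\in\B{C}_p[[z]]$ is still algebraic over $\B{C}_p(z)$, and Lemma \ref{convlemma} gives $\limsup|f(n)|_v^{1/n}\leq 1$. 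The proof of Lemma \ref{pfat} uses only algebraicity over $\B{C}_p(z)$ and convergence on the $p$-adic open unit disk, so it applies verbatim to produce a uniform bound on $|f(n)|_v$. Because only finitely many places contribute, there is a positive integer $M$ with $Mf(n)\in\mathcal{O}_K$ for every $n$.

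Next I would decompose $F$ over $\B{Q}$. Write $K=\B{Q}(\alpha)$ with $d=[K:\B{Q}]$, let $\sigma_1,\ldots,\sigma_d\colon K\hookrightarrow\overline{\B{Q}}$ be the embeddings, and expand $f(n)=\sum_{i=0}^{d-1}b_i(n)\alpha^i$ with $b_i(n)\in\B{Q}$; set $B_i(z):=\sum_nb_i(n)z^n\in\B{Q}[[z]]$. Applying the $\sigma_j$ gives the Vandermonde-type system $\sigma_j(f(n))=\sum_ib_i(n)\sigma_j(\alpha)^i$, which inverts to express each $B_i(z)$ as an $\overline{\B{Q}}$-linear combination of the series $F_{\sigma_j}(z):=\sum_n\sigma_j(f(n))z^n$. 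Each $F_{\sigma_j}$ is algebraic over $\overline{\B{Q}}(z)$ (apply $\sigma_j$ to the defining polynomial of $F$) and is analytic on the open unit disk of $\B{C}$ by Lemma \ref{convlemma} with $p=\infty$, so each $B_i(z)$ is algebraic over $\B{Q}(z)$ and converges on the open unit disk of $\B{C}$. Choosing a positive integer $N$ with $\mathcal{O}_K\subseteq\frac{1}{N}\B{Z}[\alpha]$, the inclusion $Mf(n)\in\mathcal{O}_K$ gives $MN\cdot B_i(z)\in\B{Z}[[z]]$ for every $i$. Theorem \ref{fatou} now forces $B_i(z)\in\B{Q}(z)$, hence $F(z)=\sum_iB_i(z)\alpha^i\in K(z)$, and Theorem \ref{ratC} finishes the proof.

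The main obstacle is the transfer of Lemma \ref{pfat} to the number-field setting. Although the argument is purely metric, one must rescale the polynomial relation satisfied by $F$ so that its coefficients sit in the $v$-adic unit ball of $\B{C}_p$, reproduce the constants $c_1,c_2$ from the $v$-adic sizes of the $P_i(x)$, and still find a sequence $\lambda_j\in X$ avoiding the countable exceptional set $S$ using density of $|\B{C}_p^{\times}|_p$ in $(0,\infty)$. None of this is deep, but it is the one step that cannot be quoted verbatim from Sections \ref{sec2} and \ref{sec3}.
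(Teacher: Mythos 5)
Your proof is correct, and it follows the same overall strategy as the paper: clear denominators at every finite place of $K$, decompose $F$ into $\B{Q}$-valued components, apply Fatou's theorem to each component, and invoke S\'ark\"ozy. There are two worthwhile differences in the execution. First, the paper passes to the Galois closure and uses a \emph{normal} $\B{Q}$-basis of $K$, invoking Gow--Quinlan to write the coordinate projections as $K$-linear combinations of elements of $\Gal(K/\B{Q})$; you instead use the power basis $\{1,\alpha,\dots,\alpha^{d-1}\}$ and invert the Vandermonde system coming from the $d$ embeddings. These are equivalent ways to see that each rational coordinate series $B_i(z)$ is a $\overline{\B{Q}}$-linear combination of the algebraic conjugate series $F_{\sigma_j}(z)$, and hence algebraic over $\B{Q}(z)$; your version avoids the reduction to the Galois case. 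Second, and more substantively, you bound $|f(n)|_v$ directly for the $K$-valued $f$ at the finitely many bad places, which requires re-proving Lemma \ref{pfat} for power series over $K$ (equivalently over $\B{C}_p$). The paper arranges things so that Lemma \ref{pfat} is only applied to the $\B{Q}$-valued components $f_i(n)$, after first establishing $\limsup|f_i(n)|_p^{1/n}\le 1$ by writing $f_i$ as a $K$-linear combination of the conjugates $f^{\sigma}$ and applying Lemma \ref{convlemma} to each. Your observation that the proof of Lemma \ref{pfat} uses nothing beyond algebraicity over $\B{C}_p(z)$, the ultrametric, the density of $|\B{C}_p^{\times}|_p$ in $(0,\infty)$, and boundedness of polynomials on bounded sets is correct; with that extension in hand, applying Lemma \ref{geneis} once with $R=\mathcal{O}_K$ is a slight tidying of the paper's component-by-component denominator clearing. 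Both routes land on $MN\cdot B_i(z)\in\B{Z}[[z]]$ algebraic and convergent on the unit disk, so Fatou and then Theorem \ref{ratC} finish in the same way.
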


\begin{proof} Suppose not. Without loss of generality, we may assume that $K$ is a Galois extension of $\B{Q}$. Pick a $\B{Q}$--basis $\{\ga_1,\ldots,\ga_r\}$ for $K$. In addition, we can assume this basis is {\em normal}, meaning that $\Gal(K/\B{Q})$ acts on the $\ga_i$ by permutation.

Then there exist $\B{Q}$--valued functions $f_1(n),\ldots,f_r(n)$ such that $$f(n)=f_1(n)\ga_1+f_2(n)\ga_2+\cdots+f_r(n)\ga_r.$$ Each of $F_i(z):=\sum_{n\geq 1}f_i(n)z^n\in\B{Q}[[z]]$ is algebraic over $\B{Q}(z)$. To see this, note that Theorem 1 of \cite{Gow1} gives that every $\B{Q}$--linear endomorphism of $K$ can be expressed as a $K$--linear combination of elements of $\Gal(K/\B{Q})$.

For $1\leq i\leq r$, let $M_i:K\to K$ be a $\B{Q}$--linear map sending $\ga_i\mapsto 1$ and $\ga_j\mapsto 0$ for $j\neq i$. Then $M_i(f(n))=f_i(n)$. On the other hand, $M_i$ is a $K$--linear combination of elements of $\Gal(K/\B{Q})$, and since applying automorphisms preserves algebraicity, and algebraic power series are closed under taking linear combinations, we see that $F_i(z)=M_i(F(z))$ is algebraic.

By Lemma \ref{geneis}, there exists a natural number $A$ and maps $g_1,\ldots,g_r:\B{N}\to\B{Z}$ such that $f_i(n)=g_i(n)/A^n.$ Let $p$ be a prime dividing $A$. We identify $K$ with its image under a field embedding into $\B{C}_p$. Then $|f_i(n)|_p=|M_i(f(n))|_p$; moreover, $M_i(f(n))$ is a $K$--linear combination of multiplicative functions (these are the functions $f^\gs(n)$ where $\gs\in\Gal(K/\B{Q})$) whose generating functions in $z$ are algebraic over $\B{Q}(z)$. By Lemma \ref{convlemma}, for each $\gs\in\Gal(K/\B{Q})$ we have $\limsup_{n\to\infty}|f^\gs(n)|_p^{1/n}\leq 1.$ Consequently, $\limsup_{n\to\infty}|f_i(n)|_p^{1/n}\leq 1$. By Lemma \ref{pfat}, the $|f_i(n)|_p$ are uniformly bounded. 

As there are only finitely many primes dividing $A$, we see there is a natural number  $N$ such that $N\cdot f_i(n)\in\B{Z}$ for each $i$ and every $n$. Note that $f_i(n)$ is a $K$--linear combination of the $f^\gs(n)$, each of which has the property that $\limsup_{n\to\infty}|f^\gs(n)|_\infty^{1/n}\leq 1$ by Lemma \ref{convlemma}, and hence $F_i(z)$ converges in the open unit disk of $\B{C}$. 

But $N\cdot F_i(z)$ is an algebraic power series with integer coefficients that converges in the open unit disk of $\B{C}$; hence by Theorem \ref{fatou}, it is a rational function. Since $F(z)=\sum_{i=1}^r \ga_i F_i(z)$, it too is rational. The result now follows from Theorem \ref{ratC}.
\end{proof}

\section{The algebraic case}\label{sec5}

In this section, we prove Theorem \ref{main}. The proof involves two more reductions. The first is an application of the Lefschetz principle showing that it is sufficient to prove our result in the case that $K$ is a finitely generated extension of $\B{Q}$; this is done in Lemma \ref{lefschetz}. The second reduction is obtained via Lemma \ref{bound}, which provides bounds that we use in the reduction from the case that $K$ is finitely generated over $\B{Q}$ to the case that $K$ is a finite extension of $\B{Q}$.

\begin{lemma}\label{lefschetz} Let $K$ be a field of characteristic $0$ and $F(z)=\sum_{n\geq 1}f(n)z^n\in K[[z]]$ be a $D$--finite power series. Then there exists a finitely generated $\B{Q}$--subalgebra $R$ of $K$ and a nonzero polynomial $p(x)\in R[x]$ with $p(1),p(2),\ldots$ all nonzero such that $f(n)\in S^{-1}R$ for all $n$, where $S$ is the multiplicatively closed subset of $R$ generated by $p(1),p(2),\ldots$. In the case that $F(z)$ is the power series expansion of an algebraic function over $K(z)$, there exists a finitely generated $\B{Q}$--subalgebra $T$ of $K$ such that $f(n)\in T$ for all $n$.
\end{lemma}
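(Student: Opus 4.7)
The plan is to handle the two assertions separately, in each case performing the defining recursion or polynomial identity over a suitably chosen finitely generated $\B{Q}$-subalgebra of $K$.

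For the $D$-finite statement, I would invoke the fact recalled in the introduction that a $D$-finite $F(z)$ has $P$-recursive coefficients: there exist polynomials $P_0,\ldots,P_d \in K[x]$ with $P_0 \not\equiv 0$ and
$$\sum_{i=0}^d P_i(n)\, f(n-i) \;=\; 0 \qquad (n \gg 0).$$
Choose an integer $N_0 \geq d$ strictly larger than every positive integer root of $P_0$, so that $P_0(n) \neq 0$ for $n > N_0$. Let $R$ be the $\B{Q}$-subalgebra of $K$ generated by the (finitely many) coefficients of $P_0,\ldots,P_d$ together with the values $f(1),\ldots,f(N_0)$. Set $p(x) := P_0(x+N_0) \in R[x]$; then $p(k) = P_0(N_0+k) \neq 0$ for every $k \geq 1$. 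A direct induction using
$$f(n) \;=\; -\frac{1}{P_0(n)} \sum_{i=1}^d P_i(n)\, f(n-i), \qquad n > N_0,$$
then shows $f(n) \in S^{-1}R$ for all $n$, where $S$ is the multiplicative set generated by $\{p(k)\}_{k \geq 1}$.

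For the algebraic statement, I would rerun the proof of Lemma \ref{geneis} while tracking the subalgebra in which every quantity lives. Fix a polynomial relation $\sum_{i=0}^r P_i(z) F(z)^i = 0$ of minimal degree $r$, with $P_i \in K[z]$. Let $R_0 \subseteq K$ be the $\B{Q}$-subalgebra generated by the coefficients of $P_0,\ldots,P_r$ together with $f(1),\ldots,f(m)$, where $m$ is the integer produced inside that proof (it depends only on the $P_i$ and on $F$). With this choice, all of the auxiliary objects $Q_m, F_j, T_j, S_j, G$ have coefficients in $R_0$, the constant $a := S_1(0)$ is automatically an element of $R_0$, and taking $b = 1$ already satisfies $f(1)b,\ldots,f(m)b \in R_0$. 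The induction from Lemma \ref{geneis} then yields $f(m+n)\,a^n \in R_0$ for every $n \geq 1$, so that $T := R_0[1/a]$ is a finitely generated $\B{Q}$-subalgebra of $K$ containing every $f(n)$.

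The one mild obstacle is that Lemma \ref{geneis} is stated under the hypothesis $K = \mathrm{Frac}(R)$, which we cannot arrange here without enlarging $K$. The remedy is that its proof never genuinely uses the fraction-field hypothesis: it only uses $P_i \in R[z]$, $a,b \in R$, and $f(1)b,\ldots,f(m)b \in R$. Once $R_0$ is picked so that these inclusions hold by construction, the proof transports verbatim, and this is what drives the algebraic case.
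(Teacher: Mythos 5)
For the $D$-finite statement, your argument is identical to the paper's: build $R$ from the coefficients of the recurrence polynomials and the initial values $f(1),\ldots,f(N_0)$, set $p(x)=P_0(x+N_0)$, and induct using the recursion. No daylight between the two.

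For the algebraic statement you take a genuinely different route. The paper first observes that algebraic $\Rightarrow$ $D$-finite, invokes the part of the lemma it has just proved to get a finitely generated $\B{Q}$-subalgebra $R$ with $f(n)\in\operatorname{Frac}(R)$ for all $n$, and then applies Lemma~\ref{geneis} as a black box to this $R$ (legitimate because $F$ is algebraic over $\operatorname{Frac}(R)(z)$, by the remark earlier in the paper about passing to subfields) to obtain $A\in R$ with $A^n f(n)\in R$, and takes $T=R[1/A]$. You instead inline the proof of Lemma~\ref{geneis} and track the $\B{Q}$-subalgebra directly. That works, and your key observation is correct: the proof of Lemma~\ref{geneis} never really uses $K=\operatorname{Frac}(R)$; it only needs $P_i\in R[z]$, the minimality of $r$ over the big field (which passes down to $\operatorname{Frac}(R_0)$), and $f(1),\ldots,f(m)\in R_0$, after which the induction yields $f(m+n)a^n\in R_0$ and $T:=R_0[1/a]$ works. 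The paper's route is shorter given its modular setup; yours is more self-contained and avoids the round trip through $D$-finiteness.

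One inaccuracy in your write-up worth fixing: you assert that ``all of the auxiliary objects $Q_m, F_j, T_j, S_j, G$ have coefficients in $R_0$.'' That is not true for $F_j$ or for $G$: the $F_j$ involve the full series $F$ and $G(z)=\sum_{n\geq1}f(m+n)z^n$ has coefficients that are precisely the quantities whose integrality you are trying to establish. The $F_j$ are used only to define the indices $n_j$, and the induction never needs $G\in R_0[[z]]$ — what is needed is exactly $Q_m,T_j,S_j\in R_0[z]$ and $a=S_1(0)\in R_0$, together with the Laurent-versus-power-series argument showing $S_0\in R_0[z]$. So the slip is harmless, but the sentence should be restricted to the objects that actually land in $R_0[z]$.
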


\begin{proof} Since $F(z)$ is $D$-finite, there exist polynomials $P_0(z),\ldots ,P_r(z)\in K[z]$ with $P_0(z)\neq 0$ such that
$$P_0(n)f(n)+P_1(n)f(n-1)+\cdots +P_r(n)f(n-r)=0$$ for $n\geq r$ \cite[Proposition 6.4.3]{Stan1}.  As $P_0(z)\neq 0$, there is a natural number $N>r$ such that $P_0(n)\neq 0$ for $n> N$. Let $R\subseteq K$ denote the finitely generated $\B{Q}$--subalgebra of $K$ generated by the coefficients of $P_0(z),\ldots ,P_r(z)$ and $f(1),\ldots,f(N)$.

Let $p(x)=P_0(x+N)$ and let $S$ be the multiplicatively closed subset of $R$ generated by $p(1),p(2),\ldots$. We claim that $f(n)\in S^{-1}R$ for every natural number $n$. If not, then there is a minimal natural number $n$ such that  $f(n)\notin S^{-1}R$. By construction $n$ is greater than $N$. Thus \begin{align*}f(n)&=-P_0(n)^{-1}(P_1(n)f(n-1)+\cdots +P_r(n)f(n-r))\\
&=-p(n-N)^{-1}(P_1(n)f(n-1)+\cdots +P_r(n)f(n-r)),\end{align*} which is in $S^{-1}R$, a contradiction. Hence the claim is valid.

We now consider the case that $F(z)$ is algebraic over $K(z)$. Note that $F(z)$ is $D$-finite \cite[Theorem 6.4.6]{Stan1}. It follows that there is a finitely generated $\B{Q}$--subalgebra $R$ of $K$ such that $f(n)$ is in the field of fractions of $R$ for every natural number $n$. Then by Lemma \ref{geneis}, there is some nonzero $A\in R$ such that $A^nf(n)\in R$ for all natural numbers $n$. We now take $T$ to be the finitely generated $\B{Q}$--algebra obtained by adjoining $1/a$ to $R$.  Then $f(n)\in T$ for all natural numbers $n$.
\end{proof}

\begin{lemma}\label{bound} Let $K$ be a finite extension of degree $d$ over $\B{Q}$ and $f:\B{N}\to K$ be a multiplicative function. Suppose that $F(z)=\sum_{n\geq 1}f(n)z^n\in K[[z]]$ is the power series expansion of a rational function in $K(z)$. If $F(z)$ satisfies a polynomial $P(z,F(z))=0$, where $P(z,y)=\sum_{i=0}^rP_i(y)z^i\in K[z,y]$, then $F(z)=A(z)/B(z),$ where $A(z)\in K[z]$ has $$\deg A(z)\leq \max_{0\leq i\leq r-1}\big(\deg P_i(z)\big)+(rd!-1)\cdot\deg P_r(z),$$ $B(z)\in\B{Q}[z]$ has degree at most $d!\cdot \deg P_r(z)$, and all roots of $B(z)$ are roots of unity.
\end{lemma}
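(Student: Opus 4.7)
The plan is to combine three ingredients: a clearing-denominators argument over $K$ showing the reduced denominator of $F$ divides $P_r$, S\'ark\"ozy's Theorem~\ref{ratC} to show every pole of $F$ is a root of unity, and a Galois-averaging step to descend from $K[z]$ to $\B{Q}[z]$. I read the hypothesis in the standard form $P(z,y) = \sum_{i=0}^r P_i(z)\, y^i$ with $P_i \in K[z]$ and $P_r \neq 0$. First, write $F(z) = A_0(z)/B_0(z)$ in lowest terms in $K(z)$ with $B_0$ monic and $\gcd(A_0,B_0)=1$. Substituting into $\sum_{i=0}^r P_i(z) F(z)^i = 0$ and clearing denominators by $B_0^r$ yields the polynomial identity
\[
\sum_{i=0}^r P_i(z)\, A_0(z)^i B_0(z)^{r-i} = 0.
\]
Isolating the $i=r$ term gives $P_r A_0^r = -B_0 \cdot \sum_{i<r} P_i A_0^i B_0^{r-1-i}$, so $B_0 \mid P_r A_0^r$ in $K[z]$; coprimality forces $B_0 \mid P_r$, whence $\deg B_0 \leq \deg P_r$.

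The next step is to locate the poles. Fix an embedding $K \hookrightarrow \B{C}$. By Theorem~\ref{ratC}, either $f$ is eventually zero (in which case $F$ is a polynomial and the lemma is immediate), or $f(n) = n^k\chi(n)$ for some $k \geq 0$ and a periodic multiplicative $\chi$ of period $N$. Grouping $F(z) = \sum n^k \chi(n) z^n$ by residue classes modulo $N$ and using that $\sum_{n \equiv a \,(N)} n^k z^n$ is a rational function with denominator dividing $(1-z^N)^{k+1}$, one obtains $F(z) = U(z)/(1-z^N)^{k+1}$ for some $U \in K[z]$. Hence every pole of $F$---and so every root of $B_0$---is a root of unity.

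To descend to $\B{Q}[z]$, let $L$ be the Galois closure of $K/\B{Q}$, so that $[L:\B{Q}] \leq d!$, and set
\[
B(z) := \prod_{\sigma \in \Gal(L/\B{Q})} B_0^\sigma(z).
\]
This product is Galois invariant, hence lies in $\B{Q}[z]$; its roots are Galois conjugates of roots of $B_0$, which remain roots of unity; and $\deg B \leq [L:\B{Q}]\deg B_0 \leq d! \deg P_r$. Set $A(z) := F(z) B(z) = A_0(z)\cdot(B(z)/B_0(z))$, which lies in $K[z]$ since $B_0 \mid B$ in $L[z]$. To bound $\deg A$, substitute $F = A/B$ back into the algebraic relation, multiply by $B^r$, and compare degrees of the $i=r$ term versus the rest, which gives
\[
\deg P_r + r \deg A \leq \max_{0 \leq i<r}\bigl(\deg P_i + i \deg A + (r-i)\deg B\bigr).
\]
A short case analysis on whether $\deg A \geq \deg B$ (in which case the right-hand max is attained at $i=r-1$, yielding $\deg A \leq \max_{i<r}\deg P_i + \deg B - \deg P_r$) or $\deg A < \deg B$ (in which case $\deg A < \deg B \leq d!\deg P_r$), combined with $\deg B \leq d! \deg P_r$, yields the claimed estimate $\deg A \leq \max_{i<r}\deg P_i + (rd!-1)\deg P_r$. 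The only genuinely nontrivial input is Theorem~\ref{ratC}: without multiplicativity of $f$, a rational $F$ could have poles anywhere, and the assertion that $B(z) \in \B{Q}[z]$ has only cyclotomic roots would fail. Everything else reduces to polynomial-degree bookkeeping and Galois averaging.
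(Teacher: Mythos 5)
Your overall strategy matches the paper's quite closely: clear denominators to show the reduced denominator $B_0$ divides $P_r$ (the paper does this via $G = P_rF$ being integral over the integrally closed ring $K[z]$, you via an equivalent coprimality argument), invoke Theorem~\ref{ratC} to place all poles at roots of unity, Galois-average $B_0$ over $\Gal(L/\B{Q})$ to land in $\B{Q}[z]$, and finally bound $\deg A$ from the cleared polynomial relation. The first three steps are sound.

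The gap is in your final degree estimate. Your case split on $\deg A \geq \deg B$ versus $\deg A < \deg B$ does not close for $r=1$. In the second case you only use $\deg A < \deg B \leq d!\,\deg P_r$, i.e.\ $\deg A \leq d!\,\deg P_r - 1$, and then assert this implies $\deg A \leq \max_{i<r}\deg P_i + (rd!-1)\deg P_r$. For $r\geq 2$ that follows since $(r-1)d!\geq 1$, but for $r=1$ the target is $\deg P_0 + (d!-1)\deg P_1$, and $d!\,\deg P_1 - 1 \leq \deg P_0 + (d!-1)\deg P_1$ is equivalent to $\deg P_1 - 1 \leq \deg P_0$, which is not guaranteed by the hypotheses. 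Note the lemma allows an arbitrary (non-minimal) annihilating polynomial, so $\deg P_1$ can exceed $\deg P_0 + 1$. In that regime your bound is simply too weak. (The statement is still true there, but for a reason your argument never invokes: from $P_1A = -P_0B$ one has the exact identity $\deg A = \deg P_0 + \deg B - \deg P_1$.)

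The paper avoids the case split entirely and it is cleaner: starting from
\[
\deg P_r + r\deg A \ \leq\ \max_{0\leq i\leq r-1}\bigl(\deg P_i + i\deg A + (r-i)\deg B\bigr),
\]
it bounds each of the three pieces of the maximand independently: $\deg P_i \leq \max_j \deg P_j$, $i\deg A\leq (r-1)\deg A$, and $(r-i)\deg B \leq r\,d!\,\deg P_r$. Summing the three upper bounds gives
\[
\deg P_r + r\deg A \ \leq\ \max_{0\leq i\leq r-1}\deg P_i + (r-1)\deg A + r\,d!\,\deg P_r,
\]
and solving for $\deg A$ yields the required inequality uniformly in $r\geq 1$. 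Replacing your case analysis with this one-line estimation would close the gap.
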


\begin{proof} Note that $G(z):=P_r(z)F(z)$ satisfies a polynomial equation of the form $$G(z)^r+Q_{r-1}(z)G(z)^{r-1}+\cdots+Q_0(z)=0.$$ Thus $G(z)$ is rational function in $K(z)$ that is integral over $K[z]$. Since $K[z]$ is integrally closed, $G(z)$ is a polynomial. Hence $F(z)$ can be written as $$F(z)=\frac{C(z)}{P_r(z)},$$ for some polynomial $C(z)\in K[z]$. Now by Theorem \ref{ratC}, all poles of $F(z)$ are roots of unity; thus we can write $$F(z)=\frac{A_0(z)}{B_0(z)},$$ where $A_0(z)\in K[z]$, $B_0(z)\in K[z]$ divides $P_r(z)$, and the zeros of $B_0(z)$ are all roots of unity.  Note that $K$ is contained in a Galois extension $L$ of $K$ with $[L:\B{Q}]\leq d!$.  Taking $$B(z) = \prod_{\gs\in {\rm Gal}(L/\B{Q})} B_0^{\gs}(z),$$
we see we can
write $$F(z)=\frac{A(z)}{B(z)},$$ where $A(z)\in K[z]$ and $B(z)\in\B{Q}[z]$ is of degree at most $d!\cdot \deg P_r(z)$.

To finish the proof, it only remains to prove the bound on $\deg A(z)$. Note that we have $$P_r(z)A(z)^r=-P_{r-1}(z)A(z)^{r-1}B(z)-\cdots-P_0(z)B(z)^r.$$ Comparing degrees we see that \begin{align*}\deg P_r(z)+r\cdot\deg A(z) &\leq \max_{0\leq i\leq r-1}\big(\deg P_i(z)+i\cdot\deg A(z)+(r-i)\cdot\deg B(z)\big)\\
&\leq \max_{0\leq i\leq r-1}\big(\deg P_i(z)+i\cdot\deg A(z)+(r-i)d!\cdot\deg P_r(z)\big)\\
&\leq \max_{0\leq i\leq r-1}\big(\deg P_i(z)\big)+(r-1)\deg A(z)+rd!\cdot\deg P_r(z),
\end{align*} and so we have the bound \begin{equation*} \deg A(z)\leq \max_{0\leq i\leq r-1}\big(\deg P_i(z)\big)+(rd!-1)\cdot\deg P_r(z).\qedhere\end{equation*}
\end{proof}

\begin{proof}[Proof of Theorem \ref{main}] By Lemma \ref{lefschetz}, there is a finitely generated $\B{Q}$--subalgebra $R$ of $K$ such that $f(n)\in R$ for all $n$. 
Then $F(z)$ satisfies a polynomial equation
$$\sum_{i=0}^r P_i(z)F(z)^i \ = \ 0$$ for some polynomials 
$P_0(z),\ldots ,P_r(z)\in R[z]$.

By Noether normalization \cite[Theorem 13.3]{Eis1}, there exist $t_1,\ldots,t_e\in R$ such that $t_1,\ldots, t_e$ are algebraically independent over $\B{Q}$, and $R$ is a finite $\B{Q}[t_1,\ldots,t_e]$--module.

Let $\C{S}\subseteq \spec(\B{Q}[t_1,\ldots,t_e])$ be the set of maximal ideals of the form $$(t_1-\ga_1,t_2-\ga_2,\ldots,t_e-\ga_e)$$ with $(\ga_1,\ldots,\ga_e)\in\B{Q}^e$. We write $$R=\sum_{i=1}^d \B{Q}[t_1,\ldots,t_e]u_i$$ with $u_i\in R.$ 

Since $R$ is a finite $\B{Q}[t_1,\ldots ,t_e]$--module, it is integral over $\B{Q}[t_1,\ldots ,t_e]$ \cite[Corollary 5.4, p.~396]{Hun1}.  
Consequently, by the Going--Up theorem \cite[Proposition 4.15]{Eis1}, for every $\F{m}\in\C{S}$ there is a maximal ideal $\widetilde{\F{m}}$ of $R$ with $$\widetilde{\F{m}}\cap \B{Q}[t_1,\ldots,t_e]=\F{m}.$$ 

Note that $R/\widetilde{\F{m}}$ is spanned by the images of $u_1,\ldots,u_d$ as a $\B{Q}$--vector space, since $\B{Q}[t_1,\ldots,t_e]/\F{m} \cong \B{Q}$. Hence $$\left[R/\widetilde{\F{m}}:\B{Q}\right]\leq d.$$  Thus $(f(n)+\widetilde{\F{m}})$ is a sequence in a finite extension $K_0$ of $\B{Q}$ with $[K_0:\B{Q}]\leq d$. Moreover, it is a multiplicative sequence and the power series $$y:=\sum_{n\ge 1} (f(n)+\widetilde{\F{m}})z^n,$$ satisfies the image of of the polynomial equation $P(y,z)=0$ mod $\widetilde{\F{m}}$. Thus by Lemma \ref{bound}, $y$ is a rational function $A(z)/B(z)$ with $A(z)\in (R/\widetilde{\F{m}})[z]$ and $B(z)\in \B{Q}[z]$ has all its zeros at roots of unity, and the degrees of $A(z)$ and $B(z)$ are bounded by a function of $d$ and the degrees of $P_0(z),\ldots, P_r(z)$.  

Since there are only finitely many monic polynomials in $\B{Q}[z]$ of fixed degree whose zeros are all roots of unity, we see that there is a single polynomial $C(z)\in\B{Q}[z]$ such that every rational polynomial up to some fixed degree whose zeros are all roots of unity divides $C(z)$. In particular, there exists a polynomial $C(z)\in\B{Q}[z]$ such that for every maximal ideal $\widetilde{\F{m}}$ lying over a maximal ideal $\F{m}\in \C{S}$, we have $$C(z)\sum_{n\geq 1}\left(f(n)+\widetilde{\F{m}}\right)z^n\in (R/\widetilde{\F{m}})[z],$$ and there is a uniform bound on the degrees of these polynomials as $\F{m}$ ranges over $\C{S}$.

Now consider $C(z)F(z)$. By the above remarks, there is a natural number $N$ such that for every maximal ideal $\widetilde{\F{m}}$ lying over a maximal ideal $\F{m}\in \C{S}$ and $n>N$ the coefficient of $z^n$ in $C(z)F(z)$ is in $\widetilde{\F{m}}$.

But $$\bigcap_{\F{m}\in\C{S}}\F{m}=(0),$$ and so $$\bigcap_{\substack{\widetilde{\F{m}}\ {\rm lying\ over~}\\ \F{m}\in S}}\widetilde{\F{m}}=(0).$$ Thus the coefficient of $z^n$ in $C(z)F(z)$ is $0$ for all $n>N$; that is, $C(z)F(z)$ is a polynomial. Thus $F(z)$ is a rational function, hence the result is given by Theorem \ref{ratC}.
\end{proof}

\section{The complete $D$--finite case}\label{Df}

In this section we prove Theorem \ref{mainD}. To achieve this we use a mixture of B\'ezivin's methods \cite{Bez1} as well as our own methods in order to remove the nonzero condition from the $D$--finite case.

\begin{lemma}[B\'ezivin \cite{Bez1}]\label{Bez3.5} Suppose that $F(z)=\sum_{n\geq 1}f(n)z^n\in\B{C}[[z]]$ is a $D$--finite power series with multiplicative coefficients. Then all of the singularities of $F(z)$ are located at roots of unity.
\end{lemma}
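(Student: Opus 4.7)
The plan is to follow B\'ezivin's strategy, combining it with the reduction tools already developed in the paper.

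\textbf{Step 1 (radius of convergence).} I would first show that, unless $f(n)$ is eventually zero (in which case $F$ is a polynomial and has no singularities), the radius of convergence of $F$ equals $1$. Since $F$ is $D$-finite, the coefficients $f(n)$ are $P$-recursive, and standard asymptotic theory (Birkhoff-Trjitzinsky / P\'olya) gives that $\limsup |f(n)|^{1/n}$ equals $1/\rho$ where $\rho$ is the radius of convergence. Call this limit $L$. If $L \ne 1$, pick large distinct primes $p, q$; by multiplicativity, $|f(pq)|^{1/(pq)} = |f(p)|^{1/(pq)} \cdot |f(q)|^{1/(pq)} = \bigl(|f(p)|^{1/p}\bigr)^{1/q}\bigl(|f(q)|^{1/q}\bigr)^{1/p}$, which tends to $L^0 \cdot L^0 = 1$ as $p,q\to\infty$, contradicting $L\ne 1$. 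Hence $L=1$ and $\rho=1$, so every singularity of $F$ lies in the closed region $|z|\ge 1$ and the nearest ones lie on the unit circle.

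\textbf{Step 2 (reduction to a number field and algebraicity of singularities).} To get control over the singularities, I would apply Lemma \ref{lefschetz} to deposit the values $f(n)$ into a localization of a finitely generated $\B{Q}$-subalgebra $R$ of $\B{C}$, and then apply Noether normalization and specialization at maximal ideals of $R$ (as in the proof of Theorem \ref{main}) to reduce to the case in which $f$ takes values in a number field $K$. In that setting the $D$-finite equation for $F$ has coefficients in $K[z]$; after clearing denominators the leading polynomial can be taken in $\B{Z}[z]$, so the candidate singularities (the zeros of that leading polynomial) form a finite Galois-closed set of algebraic numbers. The actual singularities of $F$ are a subset of this set.

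\textbf{Step 3 (all singularities on the unit circle, then Kronecker).} The heart of the argument is to rule out singularities with $|\alpha|>1$; Step~1 already excludes $|\alpha|<1$. Near any singularity $\alpha$ of a $D$-finite function, $F$ admits an asymptotic expansion in generalized Puiseux form, which by transfer theorems contributes to $f(n)$ a term of shape $c_\alpha\, \alpha^{-n} n^{\sigma_\alpha}(\log n)^{k_\alpha}$. Writing $f(n)$ as the sum of these contributions and substituting into the multiplicative identity $f(mn)=f(m)f(n)$ for coprime $m,n$, I would match growth rates $|\alpha|^{-mn}$ on the left against $|\alpha|^{-m}|\alpha|^{-n}$-type products on the right; such cross-terms can only balance if $|\alpha|=1$, giving the required contradiction. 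Once all singularities are known to lie on the unit circle and the set of singularities is Galois-closed by Step~2, Kronecker's theorem on algebraic integers with all conjugates of modulus one concludes that each singularity is a root of unity.

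\textbf{Main obstacle.} Step~1 and Step~4 (Kronecker) are clean; the difficulty is Step~3, ruling out singularities off the unit circle. The contributions from such outer singularities are exponentially dominated by those from the unit circle, so a naive coefficient comparison is too crude; one needs either the fine asymptotic decomposition for $D$-finite coefficients together with a careful multiplicative matching, or an alternative argument that builds an auxiliary multiplicative $D$-finite function (e.g. by peeling off the unit-circle part) whose coefficients decay exponentially, and then uses multiplicativity together with Step~1 to force this auxiliary function to be zero.
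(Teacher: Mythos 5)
Your proposal diverges significantly from the paper's argument and contains several genuine gaps. The paper's proof (Appendix A) is short and purely algebraic: since $f$ is $P$-recursive with a recurrence of order $t$, one shows (Lemma \ref{B3.2}) that for every $q$ coprime to $(2t+1)!$ the multiplicative relation $f(nq)=f(n)f(q)$ holds for \emph{all} $n$ large, not just for $n$ coprime to $q$. Choosing such a $q$ with $f(q)\neq 0$ (possible by Lemma \ref{B3.3}), the $q$-section $\sum_n f(nq)z^n$ equals $f(q)F(z)$ plus a polynomial, so the two power series have exactly the same singularities. On the other hand, Lemma \ref{B2.2} says the singularities of the $q$-section are $q$-th powers of singularities of $F$. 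Hence every singularity of $F$ is the $q$-th power of a singularity of $F$; since the singularity set is finite, iterating this map forces each singularity to satisfy $\omega^{q^k}=\omega$ for some $k\geq 1$, i.e.\ to be a root of unity. No asymptotics, no number-field reduction, no Kronecker.

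Concretely, here is where your plan breaks down. In Step 1, the estimate $|f(pq)|^{1/(pq)}\to 1$ only controls $|f(n)|^{1/n}$ along products of two large coprime factors; the $\limsup$ could perfectly well be attained along prime powers, and your argument says nothing about those. This is exactly the difficulty the paper confronts in Lemma \ref{convlemma}, where the prime power case is handled using the $P$-recursive bound together with the prime number theorem; and that lemma is only proved for \emph{algebraic} $F$, not $D$-finite, where a priori $\limsup|f(n)|^{1/n}$ could even be infinite. In Step 2, applying Lemma \ref{lefschetz} and Noether normalization to specialize at a maximal ideal \emph{changes the sequence} $f$ to $f+\widetilde{\F{m}}$ taking values in a number field; the singularities of the specialized generating function bear no obvious relation to those of $F$ itself. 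More fundamentally, a $D$-finite series over $\B{C}$ can have singularities that are transcendental (the zeros of the leading coefficient of the defining ODE need not be algebraic), so the premise that the singularity set consists of algebraic integers, which Step 4's appeal to Kronecker requires, is false in this generality. Finally, Step 3, which you yourself identify as the heart of the matter, is not an argument but a hoped-for one: "careful multiplicative matching" of Birkhoff--Trjitzinsky asymptotics is notoriously delicate when several singularities on a circle oscillate against each other, and nothing in your sketch indicates how that cancellation problem would be resolved. The paper's route through sections and Lemma \ref{B2.2} avoids all of these issues at once and is the approach you should internalize.
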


\begin{remark} Lemma \ref{Bez3.5} is stated above with only the necessary conditions for B\'ezivin's proof. This is Lemma 3.5 in \cite{Bez1}. For those interested in more details, see Appendix \ref{Apx1} for B\'ezivin's proof.
\end{remark}

\begin{lemma}\label{fred} Suppose $G(z):=\sum_{n\geq 0}g(n)z^n\in\B{C}[[z]]$ has only finitely many singularities and they are all located at roots of unity. Then there is a natural number $N$ such that for $0\leq j\leq N$ the only possible singularity of the function $G_j(z):=\sum_{n\geq 0} g(Nn+j)z^n$ occurs at $z=1$.
\end{lemma}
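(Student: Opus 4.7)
The plan is to use the classical $N$-section (multisection) trick from generating function theory. Since $G(z)$ has only finitely many singularities, and all are roots of unity, I can choose a natural number $N$ such that $\zeta^N = 1$ for every singularity $\zeta$ of $G(z)$. Let $\omega = e^{2\pi i/N}$ be a primitive $N$-th root of unity.

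For each $j$ with $0 \leq j \leq N-1$, consider the $j$-th multisection
$$H_j(z) := \sum_{n \equiv j \pmod{N}} g(n) z^n = \frac{1}{N}\sum_{k=0}^{N-1} \omega^{-jk} G(\omega^k z),$$
where the second equality comes from the standard orthogonality relation $\frac{1}{N}\sum_{k=0}^{N-1} \omega^{k(n-j)} = [n\equiv j \pmod{N}]$. Each summand $G(\omega^k z)$ is singular only at points of the form $\omega^{-k}\zeta$ where $\zeta$ is a singularity of $G$; by the choice of $N$ these are $N$-th roots of unity. Consequently $H_j(z)$ has singularities contained in the set of $N$-th roots of unity.

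By definition $H_j(z) = z^j G_j(z^N)$, so $G_j(z^N) = z^{-j} H_j(z)$. Since the series $H_j(z)$ has lowest degree at least $j$, division by $z^j$ causes no singularity at the origin, and thus $G_j(z^N)$ (as a function of $z$) has singularities contained in the set of $N$-th roots of unity. Now if $G_j(w)$ were singular at some point $w_0 \neq 1$, then $G_j(z^N)$ would be singular at every $z$ with $z^N = w_0$. For all such $z$ to be $N$-th roots of unity, we would need $w_0 = 1$, a contradiction. Hence the only possible singularity of $G_j(z)$ is at $z=1$, which is exactly the claim (the case $j = N$ reduces to $j = 0$ modulo $N$).

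The only point requiring care is the interaction between the factor $z^{-j}$ and the behavior at $z=0$, which is easily handled because $H_j$ is a power series of order at least $j$ in $z$. Beyond that, the argument is completely formal, so I do not anticipate any substantial obstacle; the essential input is only the standard roots-of-unity filter and the fact that $N$ can be chosen to simultaneously kill all singularities of $G$.
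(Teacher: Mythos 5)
Your proof is correct and is essentially the same argument as the paper's: both use the root-of-unity filter (multisection) identity to express $z^j G_j(z^N)$ as a linear combination of $G(\omega^k z)$, observe that the resulting singularities are all $N$-th roots of unity, and then deduce that $G_j$ itself can only be singular at $z=1$. You spell out a couple of details (the $1/N$ normalisation in the multisection formula, which the paper omits, and the final deduction that $w_0\neq 1$ would force a non-root-of-unity singularity of $G_j(z^N)$) that the paper leaves implicit, but the approach is the same.
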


\begin{proof} Since all of the singularities of $G(z)$ occur at roots of unity, there is a natural number $N$ such that if $z=\zeta$ is a singularity of $G(z)$ we have $\zeta^N=1$. Note that \begin{equation}\label{Gj}z^jG_j(z^N)=\sum_{k=0}^{N-1}G(e^{2\pi i k/N}z)e^{-2\pi i kj/N}.\end{equation} Now all of the singularities of the right--hand side of \eqref{Gj} are located at $N$--th roots of unity, and hence so are all of the singularities of $G_j(z^N)$. It follows that if $G_j(z)$ has a singularity it is located at $z=1$.
\end{proof}

\begin{lemma}[Agmon \cite{Agm1}]\label{agmon} Let $A(z):=\sum_{n\geq 1}a(n)z^n\in\B{C}[[z]]$ be a power series whose coefficients are all nonzero. Denote $B(z):=\sum_{n\geq 1}z^n/a(n)\in\B{C}[[z]]$. Suppose that the $\lim_{n\to\infty}\sqrt[n]{|a(n)|}=1$ and that $A(z)$ has only $z=1$ as a singularity on the unit circle. Then either $B(z)$ admits the unit circle as a natural boundary or $B(z)$ has only $z=1$ as a singularity on the unit circle and $a(n+1)/a(n)\to 1$ as $n\to\infty$.
\end{lemma}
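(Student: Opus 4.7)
The plan is to proceed by contrapositive: assume $B(z)$ does not admit the unit circle as a natural boundary, and derive both that $z=1$ is the only singularity of $B$ on $|z|=1$ and that $a(n+1)/a(n)\to 1$. The hypothesis $\sqrt[n]{|a(n)|}\to 1$ immediately gives that both $A$ and $B$ have radius of convergence exactly one, placing us in a classical unit-disk setting with a regular arc $\Gamma\subset\{|z|=1\}$ for $B$ containing some $z_0\neq 1$.

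For the first conclusion, I would exploit the Hadamard multiplication theorem applied to the coefficient-wise product
\[
(A\odot B)(z) \ = \ \sum_{n\geq 1}\frac{a(n)}{a(n)}\,z^n \ = \ \frac{z}{1-z},
\]
whose only singularity is at $z=1$. Hadamard's theorem asserts that every singularity of $A\odot B$ is of the form $\alpha\beta$ with $\alpha$ a singularity of $A$ and $\beta$ a singularity of $B$. Since the singularities of $A$ on $|z|=1$ are concentrated at $z=1$, and $B$ extends analytically across an open arc near $z_0$, a rotation/covering argument would let me rule out any hypothetical singularity $\zeta\neq 1$ of $B$ on $|z|=1$: rotating $\zeta$ via multiplication by an $\alpha$ close to $1$ would force a boundary singularity of $A\odot B$ away from $z=1$, contradicting the explicit form of $z/(1-z)$.

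For the ratio conclusion, having localized all unit-circle singularities of $B$ at $z=1$, I would study the shifted series
\[
(1-z)B(z) \ = \ \frac{z}{a(1)} + \sum_{n\geq 2}\left(\frac{1}{a(n)}-\frac{1}{a(n-1)}\right)z^n,
\]
which by the singularity analysis is analytic across every point of $|z|=1$ except possibly $z=1$. A Darboux-type local analysis of $B$ near $z=1$, together with the equal radii of convergence of $A$ and $B$, should force the coefficients $1/a(n)-1/a(n-1)$ to decay more rapidly than $1/a(n)$, which rearranges to $a(n-1)/a(n)\to 1$ and hence $a(n+1)/a(n)\to 1$.

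The main obstacle will be the Hadamard-product step: controlling how singularities of $A$ and $B$ lying off the unit circle interact (neither hypothesis directly constrains them) requires essential use of the fact that both radii of convergence are exactly one, so that only boundary singularities can produce a boundary singularity of $A\odot B$. Making this propagation of regularity across the punctured unit circle rigorous is the technical heart of Agmon's argument, and any clean writeup will likely rely on the original analysis in \cite{Agm1}.
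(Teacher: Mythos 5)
The paper does not actually prove this lemma; its ``proof'' is the one-line citation to Agmon's paper \cite{Agm1}, so your attempt is not a restatement of the paper's argument but an independent construction. Unfortunately it has a fatal gap at the Hadamard step. Hadamard's multiplication theorem is a one-way implication: it says that the singularities of the coefficient-wise product $A\odot B$ are \emph{contained in} the set of products $\alpha\beta$ with $\alpha$ a singularity of $A$ and $\beta$ a singularity of $B$. It does not assert a converse, and in particular the regularity of $A\odot B = z/(1-z)$ at a point $\zeta\neq 1$ on the unit circle tells you nothing about whether $B$ is regular at $\zeta$. Your proposed rotation argument needs exactly this nonexistent converse: a hypothetical singularity $\zeta\neq 1$ of $B$, multiplied by the singularity $1$ of $A$, produces the ``candidate'' singular point $\zeta$ of $A\odot B$, but the theorem permits that candidate to be regular (cancellation of singularities in Hadamard products is entirely generic). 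As a sanity check, for any $f$ of radius $1$ one has $f\odot\tfrac{1}{1-z}=f$, so Hadamard products whose result has the singularity structure of $\tfrac{z}{1-z}$ impose no constraint at all on the other factor.

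The second half of your sketch is also underpowered. You want to pass from ``$(1-z)B(z)$ has no unit-circle singularity other than possibly $z=1$'' to the coefficient asymptotic $1/a(n)-1/a(n-1)=o(1/a(n))$, but Darboux/singularity-analysis transfer theorems require explicit local behavior (algebraic or logarithmic singularity, $\Delta$-analytic domain, etc.), and the lemma assumes nothing about the type of singularity $B$ has at $z=1$. The implication $a(n+1)/a(n)\to 1$ is a nontrivial Tauberian conclusion; in Agmon's original argument it comes out of a careful analysis of the boundary behavior of $B$ and is not a consequence of generic singularity bookkeeping. In short: the Hadamard step needs to be discarded, and the ratio step needs substantially more structure than the lemma's hypotheses visibly provide, which is why both the paper and B\'ezivin simply defer to Agmon's paper rather than sketch a proof.
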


\begin{proof} This is stated as Lemma 2.9 in \cite{Bez1}; its proof is given by Agmon \cite{Agm1}.
\end{proof}

\begin{proposition}\label{ghlemma} Let $f(n)$ be a complex--valued sequence, let $\ga\in\B{C}$, and let $M\in\B{N}$. Define \begin{enumerate}
\item[(i)] $g(n)= f(n)-f(n-M)$ for $n>M$
\item[(ii)] $h(n)=g(n)-\ga g(n-M)$ for $n>2M$.\end{enumerate} If $|f(n)|=O(1)$ and $|h(n)|=O(n^{-\eps})$ for some $\eps\in(0,1]$ then there is a $M_1\in\B{N}$ such that $|f(n)-f(n-M_1)|=O(n^{-\eps/2}).$
\end{proposition}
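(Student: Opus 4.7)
My plan is to exploit the recurrence $g(n) = \alpha g(n-M) + h(n)$ through its iterated form
\[
g(n) - \alpha^K g(n-KM) \;=\; \sum_{k=0}^{K-1} \alpha^k h(n-kM),
\]
valid for $K \le n/(2M)$, whose right-hand side is $O(Kn^{-\eps})$ when $|\alpha|\le 1$. I would then split the argument on $|\alpha|$.

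For $|\alpha|\ne 1$, direct iteration is enough and in fact yields the stronger bound $|g(n)| = O(n^{-\eps})$. When $|\alpha|<1$, I take $K = \Theta(\log n)$ so that the boundary term $\alpha^K g(n-KM)$ is $O(n^{-\eps})$, while the geometric sum contributes another $O(n^{-\eps})$. When $|\alpha|>1$, I solve the recurrence forward and use boundedness of $g$ (a consequence of $|f|=O(1)$) to send the boundary term to zero, obtaining the absolutely convergent representation $g(n) = -\sum_{k\ge 1}\alpha^{-k}h(n+kM)$, which is of size $O(n^{-\eps})$. Either way, $M_1 = M$ works.

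The essential case is $|\alpha|=1$. I first treat $\alpha=1$: the iterated identity becomes $|g(n)-g(n-kM)| \le C k n^{-\eps}$, while $\sum_{k=0}^{K-1} g(n-kM) = f(n) - f(n-KM)$ telescopes and is bounded by $2\|f\|_\infty$. Writing
\[
K\,g(n) \;=\; \sum_{k=0}^{K-1} g(n-kM) \;+\; \sum_{k=0}^{K-1}\bigl(g(n)-g(n-kM)\bigr)
\]
and bounding each piece yields $K|g(n)| \le 2\|f\|_\infty + CK^2 n^{-\eps}$, and the balance $K \asymp n^{\eps/2}$ gives $|g(n)| = O(n^{-\eps/2})$, so $M_1=M$ works. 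For $\alpha$ a root of unity of order $r$, I reduce to the $\alpha=1$ case by passing to $\tilde f(n) := f(n) - f(n-rM)$: iterating the recurrence $r$ times with $\alpha^r=1$ shows that $\tilde f(n) - \tilde f(n-M) = g(n)-g(n-rM) = O(n^{-\eps})$, while $\tilde f$ remains bounded and $\sum_{k=0}^{K-1}\tilde f(n-kM)$ re-indexes to a difference of $2r$ values of $f$ and is hence $O(1)$. The same balancing argument applied to $\tilde f$ then delivers $|\tilde f(n)| = O(n^{-\eps/2})$, i.e.\ $M_1 = rM$.

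The principal obstacle is the quadratic error $K^2 n^{-\eps}$, whose trade-off against the $1/K$ term from boundedness is precisely what forces the loss from $\eps$ to $\eps/2$. The remaining possibility $|\alpha|=1$ with $\alpha$ not a root of unity does not arise in the intended application to Theorem \ref{mainD}, because Lemma \ref{Bez3.5} confines the singularities of $F(z)$ to roots of unity and therefore constrains the values of $\alpha$ produced in the surrounding argument.
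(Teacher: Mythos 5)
Your argument differs from the paper's in two ways. First, the paper opens with a chain of reductions --- to $M=1$, then to real-valued $f$, then to real $\alpha$ --- so that on the unit circle only $\alpha=\pm1$ remain, handled by its Cases III and IV. You skip the reductions entirely and dispose of every root of unity in one stroke by passing to $\tilde f(n):=f(n)-f(n-rM)$ when $\alpha^r=1$: iterating the recurrence $r$ times cancels the $\alpha$-factor so that $\tilde f(n)-\tilde f(n-M)=g(n)-g(n-rM)=O(n^{-\eps})$, while $\sum_{0\le k<K}\tilde f(n-kM)$ telescopes to a sum of $2r$ values of $f$ and is $O(1)$; running the Case IV balancing on $\tilde f$ then gives $|\tilde f(n)|=O(n^{-\eps/2})$ with $M_1=rM$. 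This is a clean, uniform treatment that subsumes the paper's $\alpha=-1$ and $\alpha=1$ cases simultaneously.

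However, your proof has a genuine gap. The proposition as stated allows arbitrary $\alpha\in\mathbb{C}$, and you explicitly waive the case $|\alpha|=1$ with $\alpha$ not a root of unity, on the grounds that it ``does not arise in the intended application.'' That is not a proof of the stated proposition, and the justification you give is not substantiated: in the proof of Proposition~\ref{neps} the relevant $\alpha$ is $\beta_i^M$, where the $\beta_i$ are the roots of the top-degree characteristic polynomial $Q(z)$ of the $P$-recurrence, and Lemma~\ref{Bez3.5} (about the singularities of $F(z)$) says nothing directly about those $\beta_i$. Moreover, the case you skip is exactly the dangerous one: with $\alpha=e^{i\theta}$, $\theta/\pi$ irrational, $M=1$ and $f(n)=(\alpha^n-1)/(\alpha-1)$, one has $f$ bounded, $g(n)=\alpha^{n-1}$ and $h\equiv 0$, yet $|f(n)-f(n-M_1)|=|\alpha^{M_1}-1|/|\alpha-1|$ is a nonzero constant for every $M_1$. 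So the conclusion genuinely fails for such $\alpha$, and any complete proof must exclude that regime. The paper's chain of reductions is precisely its mechanism for doing so --- once $f$ and $\alpha$ are taken real, the imaginary part of $h(n)$ already yields $|g(n-1)|=O(n^{-\eps})$ unless $\alpha\in\mathbb{R}$, leaving $\pm1$ as the only unit-circle values --- and if you bypass those reductions you must supply a substitute argument rather than an appeal to the application.
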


\begin{proof} We start by making a few reductions. Firstly, by considering the $M$ sequences $$f_i(n):=f(Mn+i)$$ for $0\leq i<M$, we see that we may assume $M=1$. Next, by considering the real and imaginary parts of $f(n)$ separately, we may assume that $f(n)$ is real--valued. Thirdly, if $f(n)$ is real--valued and $M=1$, we may assume that $\ga\in\B{R}$. To see this, note that $$\Im(h(n))=\Im(\ga)g(n-1).$$ Thus if $\ga\notin\B{R}$, $$|f(n-1)-f(n-2)|=|g(n-1)|=\frac{\Im(h(n))}{\Im(\ga)}=O(n^{-\eps}).$$ We thus assume that $M=1$, $f:\B{N}\to\B{R}$, and $\ga\in\B{R}$.

We divide the remainder of the proof into four cases. We make use of the identity \begin{equation}\label{*} g(N)-\ga^{r+1}g(N-r-1)=h(N)+\ga h(N-1)+\ga^2 h(N-2)+\cdots+\ga^rh(N-r)\end{equation} for $N>r\geq 0$ and $N-r\geq 3$. In particular, since there is a $C>0$ such that $|h(n)|<Cn^{-\eps}$ for all $n\geq 3$, we have \begin{equation}\label{**} |g(N)-\ga^{r+1}g(N-r-1)|\leq \sum_{i=0}^r\frac{C|\ga|^i}{(N-i)^\eps}<\frac{C(r+1)}{(N-r)^\eps}\max\{1,|\ga|^r\}.\end{equation}

{\sc Case I:} $|\ga|>1$. In this case \eqref{**} gives $$|\ga|^{r+1}|g(N-r-1)|<\frac{C(r+1)}{(N-r)^\eps}|\ga|^r+|g(N)|.$$ We take $r=\left\lfloor \frac{2\log n}{\log |\ga|}\right\rfloor$ and $N=n+r+1$. Then $$|g(n)|<\frac{C\left(\left\lfloor \frac{2\log n}{\log |\ga|}\right\rfloor+1\right)}{(n+1)^\eps|\ga|}+\frac{|g(n+r+1)|}{|\ga|^{r+1}}.$$ Note that $$\frac{C\left(\left\lfloor \frac{2\log n}{\log |\ga|}\right\rfloor+1\right)}{(n+1)^\eps|\ga|}=O\left(\frac{\log n}{(n+1)^\eps}\right).$$ Also $$|g(n+r+1)|=|f(n+r+1)-f(n+r)|\leq |f(n+r+1)|+|f(n+r)|=O(1)$$ and $$|\ga|^{r+1}>|\ga|^{\frac{2\log n}{\log |\ga|}}=n^2.$$ So $$\frac{g(n+r+1)}{|\ga|^{r+1}}=O\left(\frac{1}{n^2}\right).$$  It follows that $|g(n)|=O(n^{-\eps/2})$ in this case, so we take $M_1=1.$

{\sc Case II:} $|\ga|<1$. Equation \eqref{**} gives $$|g(N)|<\frac{C(r+1)}{(N-r)^{\eps}}+|\ga|^{r+1}|g(N-r-1)|.$$ If $\ga=0$ we are done, so assume that $\ga\neq 0$. We take $N=n$ and $r=\left\lfloor \frac{2\log n}{-\log |\ga|}\right\rfloor.$ Then we see $$|g(n)|<\frac{C\left(\left\lfloor \frac{2\log n}{-\log |\ga|}\right\rfloor+1\right)}{\left(n-\left\lfloor \frac{2\log n}{-\log |\ga|}\right\rfloor\right)^\eps} +|\ga|^{r+1}|g(n-r-1)|.$$ Just as in the first case, we see that $$|g(n)|=O\left(\frac{\log n}{n^\eps}\right)$$ and hence taking $M_1=1$ gives the claim.

{\sc Case III:} $\ga=-1$. In this case \begin{align*} h(n) &= g(n)+g(n-1)\\
&= f(n)-f(n-1)+f(n-1)-f(n-2)\\
&=f(n)-f(n-2).\end{align*} Thus taking $M_1=2,$ we see that $$|f(n)-f(n-M_1)|=|h(n)|=O(n^{-\eps/2}).$$

{\sc Case IV:} $\ga=1$. For this case, we follow the proof of Lemma 1 in \cite{Bel1}. Equation \eqref{**} gives $$|g(n)-g(n-r-1)|<\frac{C(r+1)}{(n-r)^\eps}$$ for $0\leq r<n-2$. Notice that \begin{align*} |f(n)-f(n-t-1)|&= |g(n)+g(n-1)+\cdots+g(n-t)|\\
&=|(t+1)g(n)+(g(n-1)-g(n-2))+\\
&\quad\qquad\cdots+(g(n-t)-g(n))|\\
&>(t+1)|g(n)|-\sum_{r=0}^{t-1}|g(n)-g(n-r-1)|\\
&>(t+1)|g(n)|-\sum_{r=0}^{t-1}\frac{C(r+1)}{(n-r)^\eps}\\
&>(t+1)|g(n)|-\frac{C}{(n-t)^\eps}\cdot\frac{(t+1)t}{2}.
\end{align*} Since $|f(n)|=O(1)$ there is a $C_1>0$ such that $|f(n)-f(n-t-1)|<C_1$ for all $n$ and $t$. Thus $$(t+1)|g(n)|<C_1+\frac{C}{(n-t)^\eps}\cdot\frac{(t+1)t}{2}.$$ We take $t=\lfloor n^{\eps/2}\rfloor.$ Then $$n^{\eps/2}|g(n)|\leq (t+1)|g(n)|<C_1+\frac{C}{(n-t)^\eps}\cdot\frac{(t+1)t}{2}=O(1).$$ In particular, $|g(n)|=O(n^{-\eps/2}),$ and so taking $M_1=1$, we obtain the result. This completes the proof.
\end{proof}

\begin{proposition}\label{neps} Let $F(z):=\sum_{n\geq 1}f(n)z^n\in\B{C}[[z]]$ be a $D$--finite power series. Suppose that $f(n)=O(1)$ and there exists a $M\in \B{N}$ such that $|f(n)-f(n-M)|\to 0$. Then there exists $M_1\in\B{N}$ and an $\eps>0$ such that $$|f(n)-f(n-M_1)|=O(n^{-\eps}).$$
\end{proposition}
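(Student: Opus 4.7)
Set $g(n) := f(n)-f(n-M)$ for $n>M$ and let $G(z) := (1-z^M)F(z)=\sum_{n>M}g(n)\,z^n$. Since $F(z)$ is $D$--finite and the class of $D$--finite power series is stable under multiplication by polynomials in $z$, the series $G(z)$ is again $D$--finite. By hypothesis $g(n)=O(1)$ and $g(n)\to 0$. My plan is to prove the stronger statement
$$ g(n)=O(n^{-\eps}) $$
for some $\eps>0$, after which the proposition holds at once with $M_1=M$ and no invocation of Proposition \ref{ghlemma} is required on this path.

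To obtain the polynomial decay, I would invoke the asymptotic theory of $P$--recursive sequences (Birkhoff--Trjitzinsky): every $D$--finite sequence admits an asymptotic expansion of the form
$$ g(n)\;\sim\;\sum_{j} c_j\,\exp\!\bigl(Q_j(n^{1/s_j})\bigr)\,\rho_j^{\,n}\,n^{r_j}(\log n)^{k_j}, $$
with $Q_j\in\B{C}[x]$, $s_j\in\B{N}$, $\rho_j$ algebraic, $r_j\in\B{Q}$, and $k_j\in\B{Z}_{\geq 0}$. Equivalently, one can argue via singularity analysis: the annihilating ODE for $G(z)$ has finitely many singular points, all in $\{|z|\geq 1\}$, and near each one Frobenius--Fuchs analysis (with the Birkhoff normal form at irregular singular points) describes $G(z)$ as a $\B{C}$--linear combination of $(1-z/\zeta)^{\alpha}(\log(1-z/\zeta))^k$ and analogous irregular pieces, transferred to $g(n)$ via a Flajolet--Sedgewick style transfer theorem. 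Boundedness of $g$ forces $|\rho_j|\leq 1$ for every surviving term, with $r_j\leq 0$ and $k_j=0$ at equality, and forbids any growing exponential factor. The hypothesis $g(n)\to 0$ eliminates the remaining non-decaying contributions: a nonzero combination $\sum_j c_j\rho_j^n$ with $|\rho_j|=1$ cannot tend to $0$ (by Weyl equidistribution when $\rho_j$ is irrational on the circle, and by an elementary pigeonhole argument in the periodic case). What survives is either super-polynomially small (from irregular singularities with $\Re Q_j\to -\infty$) or of polynomial decay $n^{r_j}(\log n)^{k_j}$ with $r_j<0$, yielding the desired bound $g(n)=O(n^{-\eps})$.

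The principal obstacle is the clean and rigorous invocation of this asymptotic machinery: (i) justifying analytic continuation of $G(z)$ in a suitable $\Delta$-domain about each singularity on the unit circle, (ii) handling possible irregular singular points of the annihilating ODE on that circle, and (iii) carrying out the Weyl equidistribution step to kill non-decaying contributions without unexpected cancellation. Should the full strength of Birkhoff--Trjitzinsky be undesirable, a two-step alternative using Proposition \ref{ghlemma} is available: identify the dominant characteristic roots $\lambda_1,\ldots,\lambda_k$ (all with $|\lambda_j|=1$) of the recurrence for $g$; the hypothesis $g(n)\to 0$ forces $\lambda_j^M=1$ for every surviving $\lambda_j$; then choose $\alpha$ equal to a dominant such root to cancel the leading contribution, so that $h(n):=g(n)-\alpha\,g(n-M)=O(n^{-\eps})$, and apply Proposition \ref{ghlemma} to conclude $|f(n)-f(n-M_1)|=O(n^{-\eps/2})$ for some $M_1\in\B{N}$.
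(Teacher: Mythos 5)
Your main route is genuinely different from the paper's, and it has a real gap that you yourself partially flag. You want to invoke Birkhoff--Trjitzinsky asymptotics (or singularity analysis) to get a full asymptotic expansion of the $D$--finite sequence $g(n)$ and then argue that boundedness plus $g(n)\to 0$ leaves only polynomially decaying terms. The difficulty is not merely technical tidying: the Birkhoff--Trjitzinsky theorem, in the generality you need (arbitrary $P$--recursive sequences, possibly irregular singular points of the annihilating ODE), is well known to have gaps in its original proof and has never been established in a form that would let you simply read off the stated expansion for an arbitrary $D$--finite coefficient sequence. The singularity-analysis reformulation requires analytic continuation of $G(z)$ to a $\Delta$-domain around each unit-circle singularity, which is exactly what fails at irregular singular points, and your Weyl/pigeonhole step to kill non-decaying $\rho_j^n$ terms also has to contend with potential cancellation between terms of the same absolute growth rate. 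You list these as the ``principal obstacles,'' but they are the whole proof, so the argument as written does not go through.

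The paper's proof avoids all of this and is essentially elementary: it starts from the $P$--recurrence $\sum_{i=0}^{d}P_i(n)f(n-i)=0$, divides by $n^D$ with $D=\max_i\deg P_i$, and uses $f(n)=O(1)$ to deduce the constant-coefficient \emph{approximate} recurrence $\sum_{i=0}^{d}c_i f(n-i)=O(1/n)$, where $c_i$ is the $z^D$-coefficient of $P_i$. Factoring $Q(z)=\sum c_i z^i = \prod_i(1-\beta_i z)$ (after normalizing $c_0=1$), one defines $f_0=f$ and $f_i(n)=f_{i-1}(n)-\beta_i f_{i-1}(n-1)$, so that $f_d(n)=O(1/n)$ and hence $|f_d(n)-f_d(n-1)|=O(n^{-1})$. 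Then one picks the minimal $i$ for which $|f_i(n)-f_i(n-M)|=O(n^{-\eps})$ holds for some $M$ and $\eps\in(0,1]$; if $i>0$, a short computation shows $h(n):=g(n)-\beta_i^M g(n-M)$ with $g(n)=f_{i-1}(n)-f_{i-1}(n-M)$ satisfies $|h(n)|=O(n^{-\eps})$, and Proposition~\ref{ghlemma} then descends to $f_{i-1}$, contradicting minimality. No asymptotic expansion, no analytic continuation, no case analysis of singular points. Note, incidentally, that this argument never uses the hypothesis $|f(n)-f(n-M)|\to 0$ at all, so the proposition is stated with more hypotheses than the proof actually needs.

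Your proposed ``two-step alternative'' is much closer in spirit to the paper's proof, since it does invoke Proposition~\ref{ghlemma}, but it is too vague to stand on its own: ``identify the dominant characteristic roots of the recurrence for $g$'' has no clean meaning for a variable-coefficient ($P$--recursive) recurrence without again appealing to Birkhoff--Trjitzinsky, and choosing a single $\alpha$ to ``cancel the leading contribution'' will not handle the case of several unimodular roots of the same modulus. The paper's trick of dividing by $n^D$ to extract a genuine constant-coefficient polynomial $Q(z)$, and then peeling off its linear factors one at a time through $f_i(n)=f_{i-1}(n)-\beta_i f_{i-1}(n-1)$ with a downward induction via Proposition~\ref{ghlemma}, is the missing idea that makes this route rigorous. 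I would recommend rewriting your proof along those lines.
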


\begin{proof} Since $(f(n))_n$ is $P$--recursive, there exist polynomials $P_0,\ldots,P_d,$ not all zero, such that \begin{equation}\label{P*} P_0(n)f(n)+P_1(n)f(n-1)+\cdots+P_d(n)f(n-d)=0\end{equation} for all $n$ sufficiently large. 

Let $D=\max\{\deg P_0,\ldots,\deg P_d\}$ and let $c_i$ be the coefficient of $z^D$ in $P_i(z)$ (possibly zero). Then dividing \eqref{P*} by $n^D$ and using the fact that $|f(n)|=O(1)$, we see that $$\sum_{i=0}^d c_if(n-i)=O\left(\frac{1}{n}\right).$$ Let $Q(z)=\sum_{i=0}^dc_iz^i.$ By shifting indices if necessary, we may assume $c_0\neq 0$; moreover, we can take $c_0=1$. Factor $$Q(z)=(1-\gb_1z)\cdots(1-\gb_dz)$$ where some of the $\beta_i$ may be zero. We take $f_0(n)=f(n)$ and define $$f_i(n)=f_{i-1}(n)-\gb_if_{i-1}(n-1)$$ for $1\leq i\leq d.$ By construction $$f_d(n)=\sum_{i=0}^d c_i f(n-i)=O\left(\frac{1}{n}\right)$$ for $n>d$. Also, for all $i$, $|f_i(n)|=O(1)$. Note that $|f_d(n)-f_d(n-1)|=O(n^{-1})$. Pick $i$ minimal for which there exists a $M$ such that $|f_i(n)-f_i(n-M)|=O(n^{-\eps})$ for some $\eps\in(0,1]$. Hence we have $i\leq d$. If $i=0$, we are done, so we may assume $i>0$.

Let $g(n)=f_{i-1}(n)-f_{i-1}(n-M)$ and $h(n)=g(n)-\gb_i^M g(n-M)$. Notice that \begin{align*} |h(n)|&= |f_{i-1}(n)-f_{i-1}(n-M)-\gb_i^Mf_{i-1}(n-M)+\gb_i^Mf_{i-1}(n-2M)|\\
&=\left|\sum_{j=0}^{M-1}\gb_i^j(f_{i-1}(n-j)-\gb_i f_{i-1}(n-j-1))\right.\\
&\qquad\quad-\left.\sum_{j=0}^{M-1} \gb_i^j(f_{i-1}(n-M-j)-\gb_i f_{i-1}(n-M-j-1))\right|\\
&=\left|\sum_{j=0}^{M-1}\left(\gb_i^jf_{i}(n-j)+\gb_i^jf_i(n-M-j)\right)\right|\\
&\leq \sum_{j=0}^{M-1}|\gb_i^j|\cdot|f_i(n-j)-f_i(n-j-M)|\\
&=O(n^{-\eps}).\end{align*} Since $|f_{i-1}(n)|=O(1)$, we see by Proposition \ref{ghlemma} that there exists a $M_1$ such that $$|f_{i-1}(n)-f_{i-1}(n-M_1)|=O(n^{-\eps/2}).$$ This contradicts the minimality of $i$. Thus $i=0$ and the result follows.
\end{proof}

Within the proof of Theorem \ref{mainD}, we will use the following theorem of Indlekofer and K\'atai \cite{Kat1}, as well as a simple lemma on periodic multiplicative functions.

\begin{theorem}[Indlekofer and K\'atai \cite{Kat1}]\label{katai} Let $f:\B{N}\to\B{C}$ be a multiplicative function, and assume that $$\sum_{n\leq x}|f(n+M)-f(n)|=O(x)$$ for a suitable $M\in\B{N}$. Then either \begin{enumerate} \item[(a)] $\sum_{n\leq x}|f(n)|=O(x),$ or 
\item[(b)] $f(n)=n^{\gs+i\tau}u(n),$ $\gs\leq 1$, and $u$ is a complex--valued multiplicative function satisfying $$u(n+M)=u(n)\ \ (\forall n\geq 1),\quad u(n)=\chi_M(n)\ \ {\textnormal{\emph{if}}}\ \ (n,M)=1,$$ where $\chi_M(n)$ is a suitable multiplicative character $\bmod\ M$.\end{enumerate}
\end{theorem}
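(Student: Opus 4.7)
The plan is to combine the bounded translational variation hypothesis with the multiplicativity of $f$ to force the rigid form in (b), treating (a) as the degenerate alternative. Broadly, the approach follows the tradition of Wirsing--Halász mean value theorems: after extracting a dominant archimedean twist $n^{s}$, the residual function should be forced to be an essentially periodic character on the residues coprime to $M$.

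Assuming case (a) fails, the average $x^{-1}\sum_{n\leq x}|f(n)|$ does not vanish, so by Halász's theorem on mean values of complex-valued multiplicative functions one can extract a distinguished exponent $s = \sigma + i\tau$ such that the twisted sequence $g(n) := f(n)/n^{s}$ admits a nonzero mean value. A short computation using $(n+M)^{s} = n^{s}(1 + M/n)^{s}$ transfers the translational hypothesis to $g$, giving $\sum_{n\leq x}|g(n+M) - g(n)| = O(x)$; the bound $\sigma \leq 1$ then follows because growth of $|f|$ exceeding $n$ on a set of positive density would violate this. To identify $g$ on $\{n : (n,M) = 1\}$ with a Dirichlet character $\chi_M$, I would compare the multiplicative identities $g(pk) = g(p)g(k)$ and $g((p+jM)k) = g(p+jM)g(k)$ for primes $p$ and $p+jM$ lying in the same residue class modulo $M$ (whose coexistence is guaranteed by Dirichlet's theorem on primes in arithmetic progressions), averaging over $k$ coprime to $M$. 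This comparison, fed through the translational hypothesis, should force $g(p)$ to depend only on $p \bmod M$, and multiplicativity then upgrades residue-class constancy into the character property. Finally, set $u(n) := g(n)$ for all $n$ and verify $u(n+M) = u(n)$ for every $n \geq 1$ using the extended hypothesis together with the just-established structure on the coprime residues, yielding the form stated in (b).

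The main obstacle is the extraction of the exponent $s$ in the first step: the full Halász--Montgomery apparatus is needed to show that an appropriate $\tau \in \B{R}$ exists with $n^{-i\tau}f(n)$ admitting a nonzero mean value, and essentially all of the analytic content of the theorem is concentrated here. A secondary but nontrivial challenge is promoting the character identity from the coprime residues to genuine periodicity on all of $\B{N}$; this is precisely why statement (b) is carefully split into two conditions on $u$, with the character identity $u(n)=\chi_M(n)$ holding only on $(n,M)=1$ while mere periodicity $u(n+M)=u(n)$ is retained on the remaining residues.
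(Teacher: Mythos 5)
The paper itself does not prove Theorem \ref{katai}: the result is quoted verbatim from Indlekofer and K\'atai \cite{Kat1} and used as a black box in Section \ref{Df}, so there is no in-paper argument against which to compare yours. Your sketch therefore has to stand on its own.

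On those terms there is a genuine gap at the very first step. Hal\'asz's mean-value theorem, in the form that extracts a twist $\tau$ with $n^{-i\tau}f(n)$ admitting a nonzero mean value, requires $|f(n)|\leq 1$ (or at least tight a priori control of $|f(p)|$ on primes). Here $f$ is an arbitrary complex-valued multiplicative function, and the negation of (a) tells you only that $x^{-1}\sum_{n\leq x}|f(n)|$ is unbounded --- no pointwise or averaged growth bound is available at this stage. More pointedly, the target conclusion $f(n)=n^{\sigma+i\tau}u(n)$ with $u$ periodic would imply $|f(n)|\asymp n^{\sigma}$ on a positive-density set, which is precisely the structure you are trying to establish, so invoking a Hal\'asz-type dichotomy to produce $s=\sigma+i\tau$ is circular. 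Relatedly, the ``short computation'' transferring the increment bound to $g=f/n^{s}$ silently assumes $\sigma\geq 0$ (dividing by $n^{\sigma}$ with $\sigma<0$ amplifies increments), and $\sigma\geq 0$ is again something you would need to derive, not assume. The residue-class comparison ``averaged over $k$'' and the promotion of periodicity to $n$ with $(n,M)>1$ are stated as intentions rather than arguments; the latter is delicate precisely because the character identity is only available on coprime residues. You correctly diagnose that the analytic core of the theorem is the extraction of the exponent $s$, but that is exactly the step left undone.
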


\begin{lemma}\label{p=1} Suppose that $\omega:\B{N}\to\B{C}$ is a periodic multiplicative function and let $p$ be a prime not dividing the period of $\omega(n)$. If $\omega(n)$ is not identically zero, then $|\omega(p)|=1$.
\end{lemma}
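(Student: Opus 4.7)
The plan is to show that $\omega$, restricted to integers coprime to the period $N$, descends to a Dirichlet character $\chi$ on $(\B{Z}/N\B{Z})^\ast$. Once this is established, since $p\nmid N$ means $p$ represents a unit mod $N$, we get $\chi(p)^{\phi(N)}=\chi(p^{\phi(N)})=\chi(1)=1$, so $\chi(p)$ is a root of unity and $|\omega(p)|=|\chi(p)|=1$.

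First I would verify $\omega(1)=1$: since $\omega$ is not identically zero, pick $n$ with $\omega(n)\neq 0$ and apply multiplicativity to the coprime pair $(1,n)$. Then for $a\in(\B{Z}/N\B{Z})^\ast$ set $\chi(a):=\omega(\tilde a)$ for any positive-integer lift $\tilde a$; this is well-defined by periodicity of $\omega$.

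The substantive step is to verify $\chi(ab)=\chi(a)\chi(b)$ for all $a,b\in(\B{Z}/N\B{Z})^\ast$. The subtlety here is that multiplicativity of $\omega$ only applies to coprime arguments, whereas arbitrary lifts $\tilde a,\tilde b$ need not be coprime. My approach would be to fix a lift $\tilde a$ and then select a lift of $b$ of the form $\tilde b=b_0+kN$ with $\gcd(\tilde a,\tilde b)=1$. Such $k$ exists by a small Chinese remainder argument: for each prime $q\mid\tilde a$ either $q\mid N$, in which case $q\nmid\tilde b$ automatically because $\gcd(b,N)=1$, or $q\nmid N$, in which case the forbidden $k$'s form a single residue class mod $q$ that is easily avoided. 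With these coprime lifts in hand, multiplicativity of $\omega$ combined with periodicity yields $\chi(ab)=\omega(\tilde a\tilde b)=\omega(\tilde a)\omega(\tilde b)=\chi(a)\chi(b)$.

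The coprime-lifting step is the only real obstacle; after that the conclusion is just the standard fact that any multiplicative map from a finite group to $\B{C}$ sending $1$ to $1$ is automatically a group homomorphism into $\B{C}^\ast$ whose image consists of roots of unity.
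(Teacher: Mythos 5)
Your proposal is correct and follows essentially the same route as the paper: both proofs show that $\omega$, restricted to residues coprime to the period, is completely multiplicative (equivalently, descends to a homomorphism on $(\B{Z}/N\B{Z})^\ast$), and both conclude via Euler's theorem that $\omega(p)$ is a root of unity. The only cosmetic difference is in the coprime-lifting step, where you use a Chinese-remainder argument to choose a lift of $b$ coprime to a fixed lift of $a$, whereas the paper observes that $b, M+b, \ldots, (a-1)M+b$ forms a complete residue system modulo $a$ and picks a representative coprime to $a$; these are the same idea.
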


\begin{proof} Denote the period of $\omega(n)$ by $M$. We will first show that $\omega(n)$ is completely multiplicative on integers coprime to $M$. To this end, let $a$ and $b$ be integers each coprime to $M$; note that also, $ab$ is coprime to $M$. We have two cases: $a\equiv b\ (\bmod\ M)$ or $a\not\equiv b\ (\bmod\ M)$.

If $a\equiv b\ (\bmod\ M)$, then $b\equiv a+M\ (\bmod\ M)$. Since $\gcd(a,M)=1$ there are $x,y\in\B{Z}$ so that $ax+My=1$. But also, $$1=ax+My=ax+My-ay+ay=a(x-y)+(a+M)y.$$ Since $1$ can be written as a linear combination of $a$ and $a+M$, we have $\gcd(a,a+M)=1$. Thus using the $M$--periodicity of $\omega(n)$, we have $$\omega(ab)=\omega(a(a+M))=\omega(a)\omega(a+M)=\omega(a)\omega(b).$$

Now suppose that $a\not\equiv b\ (\bmod\ M)$. Since $\gcd(b,M)=1$ and $\{0,1,\ldots,a-1\}$ is a complete set of residues modulo $a$, we have that $$b,M+b,2M+b,\ldots,(a-1)M+b$$ is a complete set of residues modulo $a$. Thus there is a $k\in\{0,1,\ldots,a-1\}$ such that $\gcd(kM+b,a)=1$. As before, using the $M$--periodicity of $\omega(n)$, we have that $$\omega(ab)=\omega(a(kM+b))=\omega(a)\omega(kM+b)=\omega(a)\omega(b).$$ Thus we have shown that $\omega(n)$ is completely multiplicative on integers corpime to $M$.

To finish our proof, suppose that $\omega(n)$ is not identically zero and let $p$ be a prime not dividing $M$. By a classical theorem of Euler, we have $p^{\varphi(M)}\equiv 1\ (\bmod\ M).$ Using the completely multiplicative property of $\omega(n)$ on integers coprime to $M$, we have $$\omega(1)=\omega(p^{\varphi(M)})=\omega(p)^{\varphi(M)}.$$ Since $\omega(n)$ is not identically zero, we have $\omega(1)=1$. The above equation gives $|\omega(p)|^{\varphi(M)}=1$ so that $|\omega(p)|=1$.
\end{proof}

\begin{proof}[Proof of Theorem \ref{mainD}] Let $f:\B{N}\to\B{C}$ be a multiplicative function with $D$--finite generating series. Define $h=f\overline{f}:\B{N}\to\B{R}.$ Since $h$ is real--valued we may apply B\'ezivin's result for real functions (Theorem \ref{BezivinDF}) to give $h(n)=n^k\omega(n)$. By Lemma \ref{p=1} since $h$ is not eventually zero, $|\omega(p)|=1$ for all primes $p$ larger than the period of $\omega(n)$. Thus $|h(p)|=p^k$ for $p$ any prime large enough. There are two cases: either $k$ is even, or $k$ is odd.

In the case that $k$ is even, denote $$f_0(n):= \begin{cases} \frac{f(n)}{n^{k/2}\sqrt{\omega(n)}} & {\rm if\ } \omega(n)\neq 0\\ 0 & {\rm if\ } \omega(n)=0.\end{cases}$$ Since $n^{k/2}$ is a polynomial in $n$, $f_0(n)$ has a $D$--finite generating series. Note that $f_0(n)$ is still multiplicative. We denote the generating series for $f_0(n)$ by $F_0(z)$. By Lemma \ref{Bez3.5}, all of the singularities of $F_0(z)$ are located at roots of unity. Now define $$g(n):=\begin{cases} f_0(n) & {\rm if\ } f_0(n)\neq 0\\ 1 & {\rm if\ } f_0(n)=0.\end{cases}$$ Denote the generating series of $g(n)$ by $G(z)$. Note that $\{n:f_0(n)=0\}=\{n:\omega(n)=0\}$, which is a finite union of complete arithmetic progressions.  Thus $G(z)-F_0(z)$ is a rational function whose poles are located at roots of unity.  Since the $F_0(z)$ is $D$--finite and its coefficients are multiplicative, its singularities are located at roots of unity by Lemma 6.2.  Thus the singularities of $G(z)$ are located at roots of unity. By Lemma \ref{fred} there is a natural number $N$ such that for $0\leq j\leq N$ the function $G_j(z):=\sum_{n\geq 0} g(Nn+j)z^n$ has only $z=1$ as a singularity or $G_j(z)$ is an entire function. Since $|g(n)|=1$ for all $n$, the functions $G_j(z)$ are not entire. By construction, each $G_j(z)$ is $D$--finite and $$\overline{G_j}(z)=\sum_{n\geq 1}\frac{z^n}{g(Nn+j)}$$ since $|g(n)|=1$. The function $\overline{G_j}(z)$ is also $D$--finite and hence has only finitely many singularities. Applying Lemma \ref{agmon}, we see that for $0\leq j<N$ we have $$\lim_{n\to\infty}g(N(n+1)+j)-g(Nn+j)=0.$$ Since this holds for all $0\leq j<N$, we have $$\lim_{n\to\infty}g(n+N)-g(n)=0.$$

Denote the period of $\omega(n)$ by $M$. Then $f_0(n+NM)-f_0(n)\to 0$ since either $f_0(n+NM)=f_0(n)=0$ or $$f_0(n+NM)-f_0(n)=g(n+NM)-g(n)\to 0.$$ In either case $$\lim_{n\to\infty}f_0(n+NM)-f_0(n)=0.$$ By Proposition \ref{neps}, there exists an $\eps>0$ and a $M_1\in\B{N}$ such that $$|f_0(n+M_1)-f_0(n)|=O(n^{-\eps}).$$ Denote $$\widetilde{f_0}(n):=f_0(n)n^{\eps}.$$ Then \begin{equation}\label{katai1}\sum_{n\leq x}\left|\widetilde{f_0}(n)\right|=\sum_{\substack{n\leq x\\ \omega(n)\neq 0}}n^\eps\sim Cx^{1+\eps},\end{equation} for some positive constant $C$, since the set of $n$ for which $\omega(n)\neq 0$ is a nonempty finite union of arithmetic progressions. Equation \eqref{katai1} is the first step towards applying Theorem \ref{katai}. For the next step, notice that \begin{align*} \sum_{n\leq x}|\widetilde{f_0}(n+M_1)-\widetilde{f_0}(n)| &=\sum_{n\leq x}|f_0(n+M_1)(n+M_1)^\eps-f_0(n)n^\eps)|\\
&\leq \sum_{n\leq x}\big[|f_0(n+M_1)|((n+M_1)^\eps-n^\eps)\\
&\qquad +n^\eps|f_0(n+M_1)-f_0(n)|\big]\\
&=\sum_{n\leq x} O(1)=O(x).
\end{align*} By Theorem \ref{katai}, we have $\widetilde{f_0}(n)=n^{\gs+i\tau}u(n)$ where $\gs\leq 1$ and $u(n)$ is a multiplicative periodic function. Hence $$f_0(n)=n^{\gs-\eps+i\tau}u(n).$$ Since $|f_0(n)|=1$ for infinitely many $n$, we see that $\gs-\eps=0$, and so $$f_0(n)=n^{i\tau}u(n).$$ Since $u(n)$ is periodic and not identically zero, it must be the case that $\tau=0$ (cf. proof of Proposition 2 of \cite{Ger1}); thus $f_0(n)=u(n)$. 

Recall that $$f(n)=f_0(n)n^k\sqrt{\omega(n)}=n^ku(n)\sqrt{\omega(n)},$$ and the product $u(n)\sqrt{\omega(n)}$ is both periodic and multiplicative, which proves the theorem in the case that $k$ is even.

Now suppose that $k$ is odd. Let $f(n)$ be multiplicative and $P$--recursive and suppose that for all sufficiently large primes $p$ we have $|f(p)|^2 = p^k$. Then $$h(n):=f(n)^2$$ is multiplicative, $P$--recursive and has $|h(p)|^2=p^{2k}$ for $p$ large enough. By the result in the even case, $$h(n)=n^k \omega(n)$$ for some periodic multiplicative function $\omega(n)$. Pick $\omega_0(n)$ periodic with $\omega_0(n)^2=\omega(n)$. Then $$f(n)^2 = n^k \omega_0(n)^2,$$ so that $$f(n) = \eps_n n^{k/2} \omega_0(n)$$ where $\eps_n\in\{+1,-1\}$ for all $n$. Notice now that if we pick an arithmetic progression $am+b$ on which $\omega_0(n)$ is a nonzero constant $c$, then $$f(am+b) =(am+b)^{k/2}\eps_{am+b} c .$$ Let $K:=\B{Q}(f(1),f(2),f(3),\ldots)(c).$ Recall that $K$ is finitely generated by the $D$--finite hypothesis. But $$(am+b)^{k/2} = f(am+b)\eps_{am+b} c^{-1}\in K$$ for every $m\geq 1$, which contradicts that $K$ is a finitely generated extension of $\B{Q}$.
\end{proof}

\section{Concluding remarks}

Throughout our investigation, multiplicative periodic functions have played an essential role. We have chosen to denote these functions by $\chi$. This notation is not used coincidentally, but because of a relationship to Dirichlet characters. Indeed, let $f:\B{N}\to\B{C}$ be a periodic  multiplicative function. Since $f$ is multiplicative, $f(1)=1$. Now let $n\in\B{N}$ with $(n,d)=1$. Using Dirichlet's theorem for primes in arithmetic progressions, one has that there are infinitely many primes of the form $dx+n$. Now let $k\in\B{N}$ and choose $k$ distinct primes $dx_1+n,dx_2+n,\ldots,dx_k+n$. Since $f$ is multiplicative, and primes are coprime to each other, using the $d$--periodicity of $f$ we have $$f(n^k)=f\left(\prod_{i=1}^k(dx_i+n)\right)=\prod_{i=1}^kf(dx_i+n)=f(n)^k;$$ hence $f$ is completely multiplicative when restricted to the positive integers coprime to $d$. Also, by Euler's theorem, we have that $$f(1)=f(n^{\gf(d)})=f(n)^{\gf(d)},$$ so that $f(n)$ is a $\gf(d)$--th root of unity. Hence for all $n$ with $(n,d)=1$, $f$ agrees with a Dirichlet character $\chi$ modulo $d$. Indeed, one may describe these functions completely.	

\begin{theorem}[Leitmann and Wolke \cite{LW1}] Let $f:\B{N}\to\B{C}$ be a multiplicative function with period $d=p_1^{l_1}\cdots p_r^{l_r}$. Then for $i=1,\ldots,r$, there exist $n_i\in\B{N}$ ($0\leq n_i\leq l_i$) such that $$f(p^l)=\begin{cases} \chi(p^l) & (p,d)=1\\ a_{i,l} & p=p_i\ (i=1,\ldots,r)\end{cases}$$ where $a_{i,j}\in\B{C}$  ($i=1,\ldots,r$ and $j\geq 0$) with \begin{align*} a_{i,0} &=1,\\ a_{i,l} &=0 \qquad (n_i<l\leq l_i),\\ a_{i,l_i+t} &=a_{i,l_i}\chi^*(p_i^t) \qquad (i=1,\ldots,r),\end{align*} where $\chi$ is a character modulo $d$ and $\chi^*$ is a character modulo $d=p_1^{l_1-n_1}\cdots p_r^{l_r-n_r}.$
\end{theorem}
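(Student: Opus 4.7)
The case $(p,d)=1$ is already delivered by the paragraph immediately preceding the theorem, which furnishes a Dirichlet character $\chi$ modulo $d$ with $f(n)=\chi(n)$ whenever $(n,d)=1$. For each prime $p_i\mid d$, set $a_{i,l}:=f(p_i^l)$, note $a_{i,0}=1$, and define $n_i$ as the largest index in $\{0,1,\ldots,l_i\}$ at which $a_{i,l}$ is nonzero. This definition immediately secures the middle clause $a_{i,l}=0$ for $n_i<l\leq l_i$; the substance of the theorem is then the character-like evolution of $a_{i,l}$ for $l>l_i$ and the identification of a common modulus $d^*:=\prod_j p_j^{l_j-n_j}$ for $\chi^*$.

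The main engine is periodicity: writing $d=p_i^{l_i}d'$ with $d'=\prod_{j\neq i}p_j^{l_j}$ coprime to $p_i$, for each $t\geq 1$ and each $m$ coprime to $d$ one has
\[
p_i^{l_i+t}m+d=p_i^{l_i}\bigl(p_i^t m+d'\bigr),
\]
and $p_i^t m+d'$ is itself coprime to $d$ (it is $\equiv d'\pmod{p_i}$ and $\equiv p_i^tm\pmod{p_j^{l_j}}$ for $j\neq i$, each a unit). Combining multiplicativity with $f=\chi$ on integers coprime to $d$ yields the pivotal relation
\[
a_{i,l_i+t}\,\chi(m)\;=\;a_{i,l_i}\,\chi\bigl(p_i^t m+d'\bigr).
\]
If $n_i<l_i$, then $a_{i,l_i}=0$ and setting $m=1$ already forces $a_{i,l_i+t}=0$ for every $t\geq 1$.

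In the substantial case $n_i=l_i$, I would decompose $\chi=\prod_j\chi_j$ via the CRT with $\chi_j$ a character modulo $p_j^{l_j}$. Using $\chi_j(p_i^tm+d')=\chi_j(p_i)^t\chi_j(m)$ for $j\neq i$ and cancelling the common factor $\chi'(m):=\prod_{j\neq i}\chi_j(m)$, the relation becomes
\[
a_{i,l_i+t}\,\chi_i(m)\;=\;a_{i,l_i}\,\chi_i\bigl(p_i^tm+d'\bigr)\,\chi'(p_i)^t.
\]
Letting $m$ range over residues coprime to $d$ and using Dirichlet's theorem on primes in arithmetic progressions to realise every residue class mod $p_i^{l_i}$ coprime to $p_i$, the identity forces $\chi_i$ to be trivial; it then collapses to $a_{i,l_i+t}=a_{i,l_i}\chi'(p_i)^t$, with $\chi'$ a character mod $d'$.

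A parallel application of periodicity at the index $n_i$ in the case $n_i<l_i$ (starting from $f(p_i^{n_i}m)=f(p_i^{n_i}m+d)$ and cancelling the nonzero factor $a_{i,n_i}$) delivers $\chi_i(1+p_i^{l_i-n_i}u)=1$ for every unit $u$ mod $p_i^{l_i}$. Since $1+p_i^{l_i-n_i}$ generates the kernel of $(\B{Z}/p_i^{l_i}\B{Z})^\times\to(\B{Z}/p_i^{l_i-n_i}\B{Z})^\times$ (standard for odd $p_i$, with the usual small modification for $p_i=2$), $\chi_i$ descends to a character modulo $p_i^{l_i-n_i}$. Assembled across all $i$, this expresses $\chi$ as a single character $\chi^*$ modulo $d^*$, and the formula $a_{i,l_i+t}=a_{i,l_i}\chi^*(p_i^t)$ then holds uniformly (both sides vanishing when $n_i<l_i$, and agreeing via $\chi^*(p_i)=\chi'(p_i)$ when $n_i=l_i$). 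I expect the main obstacle to be precisely this descent step for each $\chi_i$ — extracting the right vanishing identities from periodicity and then performing the consistency check that the individual descents really assemble into one character of the claimed modulus $d^*$.
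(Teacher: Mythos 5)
The paper does not prove this theorem: it is quoted from Leitmann and Wolke \cite{LW1} at the close of the concluding remarks, and the preceding paragraph establishes only the coprime-to-$d$ piece you use as a starting point, namely that $f$ agrees with some Dirichlet character $\chi$ modulo $d$ on integers coprime to $d$. So there is no in-paper proof against which to compare; the theorem is invoked as a known result.

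On its own terms, your reconstruction has the right architecture: the two periodicity identities (at $p_i^{l_i+t}m+d$ and at $p_i^{n_i}m+d$), the CRT decomposition $\chi=\prod_j\chi_j$, the cancellation of $\chi'(m)$, and the final consistency check $\chi^*(p_i^t)=\chi'(p_i)^t$ when $n_i=l_i$ (which goes through because the $i$-th local factor of $\chi^*$ is then the trivial character mod $1$) are all the correct moves. Two places need tightening. First, ``the identity forces $\chi_i$ to be trivial'' is asserted without derivation: for a fixed small $t$ the factor $\chi_i(p_i^tm+d')$ still varies with $m$, so letting $m$ range over residues does not yet make the right-hand side constant. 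You need to specialize to $t\geq l_i$, so that $p_i^tm+d'\equiv d'\pmod{p_i^{l_i}}$ and the right side becomes the $m$-independent, nonzero quantity $a_{i,l_i}\chi_i(d')\chi'(p_i)^t$; only then does $a_{i,l_i+t}\neq 0$ follow, and only then is $\chi_i$ constant on units, hence trivial. Second, Dirichlet's theorem is superfluous --- the Chinese Remainder Theorem already produces, for each unit $a$ modulo $p_i^{l_i}$, an $m$ coprime to $d$ with $m\equiv a\pmod{p_i^{l_i}}$ --- while the $p_i=2$, $l_i-n_i=1$ case of the descent genuinely requires two of the elements $1+2u$ with $u$ odd (a single $1+2$ does not generate $(\B{Z}/2^{l_i}\B{Z})^\times$ once $l_i\geq 3$), so the ``usual small modification'' should not be entirely elided. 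With these repairs the argument is complete.
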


\noindent We note that the conclusion of this theorem holds over any field of characteristic zero, once again appealing to the Lefschetz principle.

In the case of positive characteristic, algebraic functions are much more pathological. For example, while Fatou's theorem shows that an algebraic function whose coefficients are uniformly bounded is rational, the function $$F(z)=\sum_{n\geq 0}z^{2^n}\in\B{F}_2[[z]]$$ is algebraic over $\B{F}_2(z)$, but is, nevertheless, irrational. Note that $F(z)$ is the generating function of a multiplicative function, and hence the conclusion of Theorem \ref{main} does not hold in positive characteristic. Christol gives a characterization of algebraic functions over finite fields in terms of automatic sequences.

\begin{theorem}[Christol \cite{Chr1}] Let $q=p^k$ be a prime power, let $\mathbb{F}_q$ be a finite field of size $q$, and let $(u_n)_{n\geq 0}$ a sequence with values in $\mathbb{F}_q$. Then, the sequence $(u_n)_{n\geq 0}$ is $p$--automatic if and only if the formal power series $\sum_{n\geq 0} u_n X^n$ is algebraic over $\mathbb{F}_q(X)$.
\end{theorem}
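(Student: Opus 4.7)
The plan is to analyze both directions via the section operators $\Lambda_j : \mathbb{F}_q[[X]] \to \mathbb{F}_q[[X]]$ defined by $\Lambda_j\bigl(\sum_n a_n X^n\bigr) = \sum_m a_{qm+j}\,X^m$ for $0 \leq j < q$. Since $a^q = a$ for $a \in \mathbb{F}_q$, applying the characteristic-$p$ Frobenius yields the fundamental decomposition
\begin{equation*}
F(X) \ = \ \sum_{j=0}^{q-1} X^j\,\Lambda_j(F)(X)^q.
\end{equation*}
A sequence over $\mathbb{F}_q$ is $p$-automatic if and only if it is $q$-automatic, which in turn holds if and only if its $q$-kernel $\bigl\{\Lambda_{j_k}\!\cdots\Lambda_{j_1}(F) : k \geq 0,\ 0 \leq j_i < q\bigr\}$ is finite.

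For \emph{automatic $\Rightarrow$ algebraic}, assume the $q$-kernel is $\{F_1,\ldots,F_N\}$ with $F_1 = F$. The decomposition writes each $F_i$ as an $\mathbb{F}_q[X]$-linear combination of $q$-th powers of kernel elements, so $L := \mathbb{F}_q(X)(F_1,\ldots,F_N)$ satisfies $L = \mathbb{F}_q(X)(F_1^q,\ldots,F_N^q) = \mathbb{F}_q(X) \cdot L^q$. Invoking the standard $p$-basis fact that for a finitely generated extension of characteristic-$p$ fields $L/K$ one has $[L : K \cdot L^q] = q^{\mathrm{trdeg}(L/K)}$, the equality $L = K \cdot L^q$ forces $\mathrm{trdeg}(L/K) = 0$, so $F \in L$ is algebraic over $\mathbb{F}_q(X)$.

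For \emph{algebraic $\Rightarrow$ automatic}, suppose $F$ satisfies $P(X, F) = 0$ with $P \in \mathbb{F}_q[X, Y]$ of $Y$-degree $d$. I would first show that algebraicity is preserved under each $\Lambda_j$: substitute the fundamental decomposition into $P(X, F) = 0$, expand using Frobenius, collect terms by the residue of exponents modulo $q$, and extract for each $\Lambda_j(F)$ a polynomial relation over $\mathbb{F}_q(X)$ of $Y$-degree at most $d$. Iterating, every element of the $q$-kernel is algebraic, and careful bookkeeping shows that they all lie in a single finite-dimensional $\mathbb{F}_q(X)$-subspace $W \subseteq \mathbb{F}_q((X))$ whose basis can be described explicitly from sections of $1, F, F^2, \ldots, F^{d-1}$.

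The main obstacle is to upgrade finite-dimensionality of $W$ over the infinite field $\mathbb{F}_q(X)$ to honest finiteness of the $q$-kernel. I would fix a basis $\{B_\ell\}$ of $W$ and a common denominator $D(X) \in \mathbb{F}_q[X]$ so that every kernel element $G \in W \cap \mathbb{F}_q[[X]]$ admits a representation $D(X)\,G = \sum_\ell R_\ell(X)\,B_\ell(X)$ with $R_\ell \in \mathbb{F}_q[X]$. A valuation analysis at the finite poles of the $B_\ell$, combined with the integrality of $G$ at $X = 0$, bounds $\deg R_\ell$ uniformly in $G$; since $\mathbb{F}_q[X]_{\leq M}$ is finite, only finitely many tuples $(R_\ell)$ are admissible, forcing the $q$-kernel to be finite and $(u_n)$ to be $p$-automatic. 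Establishing this uniform degree bound is the most delicate step of the argument.
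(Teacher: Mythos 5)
The paper does not give a proof of Christol's theorem; it is quoted in the concluding remarks as background (with a reference to \cite{Chr1}), so there is no in-paper argument to compare against. Judged on its own merits, your proposal uses the standard machinery (the Cartier/section operators $\Lambda_j$ and the decomposition $F=\sum_j X^j\Lambda_j(F)^q$), but both directions have issues. In the automatic $\Rightarrow$ algebraic direction, the ``standard $p$-basis fact'' as you state it, $[L:K\cdot L^q]=q^{\operatorname{trdeg}(L/K)}$, is false in general: taking $K=\mathbb{F}_p(t)$ and $L=K(t^{1/p})$ gives $\operatorname{trdeg}(L/K)=0$ but $[L:KL^p]=p$. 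The correct tool is $[E:E^q]=q^{\operatorname{trdeg}(E/\mathbb{F}_q)}$ for $E$ finitely generated over the \emph{perfect} base field $\mathbb{F}_q$; applied to $E=\mathbb{F}_q(X,F_1,\dots,F_N)$ together with $E=\mathbb{F}_q(X)E^q$ and $[\mathbb{F}_q(X)E^q:E^q]\le q$, it forces $\operatorname{trdeg}(E/\mathbb{F}_q)\le 1$, which is what you want. So the conclusion is reachable, but the lemma you invoke needs to be restated and reattributed to the absolute (not relative) setting.

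The algebraic $\Rightarrow$ automatic direction has a genuine gap. Containing the $q$-kernel in a finite-dimensional $\mathbb{F}_q(X)$-subspace $W$ and then trying to bound numerator degrees cannot work as described: already for $W=\mathbb{F}_q(X)\cdot 1$ the set $W\cap\mathbb{F}_q[[X]]$ contains all of $\mathbb{F}_q[X]$ and is infinite, and a valuation condition at $X=0$ alone imposes no constraint at infinity, so no uniform bound on $\deg R_\ell$ follows. (Your representation $D(X)G=\sum_\ell R_\ell B_\ell$ with $R_\ell\in\mathbb{F}_q[X]$ is itself an assumption, not a consequence of $G\in W\cap\mathbb{F}_q[[X]]$.) The information you are discarding is precisely that the kernel elements have coefficients in the \emph{finite} set $\mathbb{F}_q$ and are linked by the Cartier identity $\Lambda_j(A\,H^q)=\Lambda_j(A)\,H$. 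The standard fix is to use Ore's lemma to write $\sum_{i=0}^m B_iF^{q^i}=0$ with $B_0\ne 0$ and then to exhibit a finite-dimensional $\mathbb{F}_q$-vector space (not $\mathbb{F}_q(X)$-vector space), e.g.\ of elements $A(X)F^{q^i}/B_0$ with $\deg A$ bounded, that contains $F$ and is stable under every $\Lambda_j$; finiteness of the kernel is then immediate because a finite-dimensional $\mathbb{F}_q$-space is a finite set. Relatedly, your sketch for showing $\Lambda_j$ preserves algebraicity by ``collecting residues mod $q$'' only yields a joint polynomial relation among all of $\Lambda_0(F),\dots,\Lambda_{q-1}(F)$ simultaneously (and in characteristic $p$ there are no nontrivial $q$-th roots of unity to separate the residue classes), whereas the Ore relation gives cleanly that each $\Lambda_j(F)$ lies in $\mathbb{F}_q(X)(F)$.
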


\noindent In light of Christol's theorem, it is natural to ask if one can characterize automatic multiplicative functions. Partial progress has been made by Yazdani \cite{Yaz1} and Coons \cite{Coons1}. All examples of automatic multiplicative functions found thus far have the property that they are well behaved on the set of prime powers. We make this more explicit in the following conjecture.

\begin{conjecture} Let $k\geq 2$ and $f$ be a  $k$--automatic multiplicative function. Then there is an eventually periodic function $g$ such that $f(p)=g(p)$ for every prime $p$.
\end{conjecture}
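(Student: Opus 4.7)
The plan is to combine the finiteness of the value set of $f$ (forced by $k$-automaticity) with the rigidity of multiplicativity, first to pin down the values $f(p)$ on primes up to a finite set of roots of unity, and then to promote this structural information to full eventual periodicity on primes. To begin, since $f$ takes only finitely many values and $f(pq)=f(p)f(q)$ whenever $p,q$ are distinct primes with $f(p)f(q)\ne 0$, the multiplicative semigroup generated by $\{f(p):p\text{ prime},\,f(p)\ne 0\}$ is contained in the (finite) image of $f$, and hence is itself finite. Therefore every nonzero $f(p)$ is a root of unity, and there is a single integer $m$ with $f(p)\in\{0\}\cup\mu_m$ for every prime $p$. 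This reduces the conjecture to showing that, for each $\zeta\in\{0\}\cup\mu_m$, the set $P_\zeta=\{p\text{ prime}:f(p)=\zeta\}$ is, for large $p$, a union of arithmetic progressions with a common modulus.

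My preferred next step is to try to extract a Cobham-type dichotomy: show that a $k$-automatic multiplicative function must also be $\ell$-automatic for some $\ell$ multiplicatively independent from $k$, so that Cobham's theorem forces $f$ to be eventually periodic on all of $\B{N}$, which together with Lemma~\ref{declension} would immediately give the conjecture. To pursue this, I would study the $k$-kernel $\{(f(k^an+b))_n:a\ge 0,\,0\le b<k^a\}$ and exploit the fact that multiplicativity relates values $f(k^an+b)$ across different residues $(a,b)$ whenever the integer $k^an+b$ factors in a controlled way. Iterating this along prime powers and using Dirichlet's theorem to select primes in prescribed coprime residue classes should, in principle, generate enough identities between kernel elements to force automaticity in a second, independent base.

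The hardest part will be making this last step rigorous: the automata-theoretic and multiplicative constraints have to interact on the thin set of primes, and there is no off-the-shelf tool which accomplishes this. A fallback analytic route would be to invoke Mauduit--Rivat-type bounds showing that digit-based automatic sequences do not correlate with the primes, and combine these with the bounded multiplicativity of $f$ to force each $P_\zeta$ to agree, for large $p$, with a union of arithmetic progressions; but reconciling these analytic estimates simultaneously across all classes $P_\zeta$ is delicate. Finally, the techniques of Sections~\ref{sec2}--\ref{sec5} are unlikely to transfer: in positive characteristic Fatou's theorem fails, as illustrated by $\sum_{n\ge 0} z^{2^n}\in\B{F}_2[[z]]$, so the rational--transcendental dichotomy driving the arguments of this paper is not available, and a genuinely new ingredient seems required.
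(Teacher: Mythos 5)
This statement is presented in the paper as a \emph{conjecture}; the authors offer no proof of it, and indeed remark that characterizing automatic multiplicative functions is an open problem. So there is no ``paper's own proof'' to compare against. What you have written is, by your own admission, a plan rather than a proof: you flag that the Cobham-type step is not rigorous, and that the analytic fallback is ``delicate.'' Those caveats are correct, and as it stands the proposal does not establish the conjecture.

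Beyond the acknowledged open steps, there is also a gap earlier in the argument than you seem to realize. You claim that the multiplicative semigroup generated by $\{f(p):p\text{ prime},\ f(p)\ne 0\}$ lies inside the finite image of $f$, hence is finite, hence every nonzero $f(p)$ is a root of unity. But $f$ is only assumed multiplicative, not completely multiplicative, so $f(p)^a$ for $a\geq 2$ is \emph{not} equal to $f(p^a)$ and need not lie in the image of $f$. The containment you use would only give you products of $f(p)$ over \emph{distinct} primes $p$; a power $f(p)^a$ is in the image only if one can find $a$ distinct primes all with $f$-value $f(p)$. By pigeonhole at least one value is attained on infinitely many primes and is therefore a root of unity, but for a value attained on only finitely many primes the argument breaks down, and you cannot conclude it is a root of unity (or even of modulus one) from semigroup finiteness alone. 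So even the ``easy'' reduction to showing each $P_\zeta$ is eventually a union of arithmetic progressions is not yet in place. Any serious attack on the conjecture would need to handle this, and then supply the genuinely new ingredient you correctly identify as missing from the Cobham/Mauduit--Rivat routes.
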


Finally, we note that in positive characteristic, Kedlaya \cite{Kedlaya} has pointed out that the algebraic closure of the Laurent power series over a field $K$ is not as well behaved as in the characteristic $0$ case.  To alleviate this difficulty, he looks at the algebra of \emph{Hahn power series}, $K((z^{\B{Q}}))$.  In this ring, we take all power series of the form
$$\sum_{\alpha\in \B{Q}} c_{\alpha} z^{\alpha}$$ with $c_{\alpha}\in K$ such that the set of $\alpha\in \B{Q}$ for which $c_{\alpha}\not = 0$ is well-ordered.  The advantage of working with this ring is that it is algebraically closed.  Kedlaya also extends the notion of being $k$-automatic, for a natural number $k$, to functions whose domain is the rational numbers.  Given a finite set $\Delta$, it would be interesting to characterize completely multiplicative maps 
$f:\mathbb{Q}\rightarrow \Delta$ that are $k$-automatic in the sense of Kedlaya.\\

\noindent\tbf{Acknowledgement.} We would like to thank Jean--Paul B\'ezivin for directing us to his paper and also for making many helpful comments.

\appendix

\section{Proof of Lemma \ref{Bez3.5}}\label{Apx1}

This appendix contains a B\'ezivin's proof of Lemma \ref{Bez3.5}. All of lemmas and their proofs are translated versions from B\'ezivin \cite{Bez1} with some corrected typos and slight modifications for ease of reading. They are added here for completeness (see the remark after Lemma \ref{Bez3.5}).

To give the proof of Lemma \ref{Bez3.5}, we will need the following lemmas from \cite{Bez2, Bez1} in their originally stated form.

\begin{lemma}[B\'ezivin \cite{Bez2}]\label{B2.2} Suppose that $\psi(z):=\sum_{n\geq 0}a(n)z^n\in\B{C}[[z]]$ is a $D$--finite power series. Let $$\sum_{j=0}^t P_j(n)a(n+j)=0$$ be the recurrence relation satisfied by $a(n)$, where $P_i(z)\in\B{C}[x]$ with $P_t(z)$ nonzero. Let $q$ be a positive integer. Then the function $b(n):=a(nq)$ satisfies a recurrence relation of the form $$\sum_{k=0}^m H_k(n)b(n+k)=0,$$ where $H_k(z)\in\B{C}[z]$, $H_m(z)$ nonzero, and $m\leq t.$ Moreover, if the radius of convergence of $\psi(z)$ is finite and nonzero, the singularities of the series $\sum_{n\geq 0}b(n)z^n\in\B{C}[[z]]$ are among the $q$--th powers of the singularities of the series $\psi(z)$.
\end{lemma}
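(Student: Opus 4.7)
The plan is to establish the recurrence for $b(n)=a(qn)$ by a rank count over $\mathbb{C}(n)$, and then to deduce the location of singularities via the standard $q$-th root of unity averaging formula.

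First, since $P_t(z)$ is a nonzero polynomial, $P_t(n)\neq 0$ for all but finitely many $n$, and on this cofinite set the recurrence rewrites as
\[
a(n+t) \;=\; -\frac{1}{P_t(n)}\sum_{j=0}^{t-1} P_j(n)\,a(n+j).
\]
By induction, every shift $a(n+k)$ for $k\geq 0$ can then be expressed as a $\mathbb{C}(n)$-linear combination of the $t$ elements $a(n),a(n+1),\ldots,a(n+t-1)$; in other words, the $\mathbb{C}(n)$-module generated by the shifts of $a$ has rank at most $t$.

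Applying this to the arithmetic-progression subsequence, for each $k\geq 0$ the element $b(n+k)=a(qn+qk)$ is (after substituting $n\mapsto qn$ in the relations above) a $\mathbb{C}(n)$-linear combination of $a(qn),a(qn+1),\ldots,a(qn+t-1)$. Hence the $t+1$ shifts $b(n),b(n+1),\ldots,b(n+t)$ all lie in a $\mathbb{C}(n)$-span of only $t$ elements and must be linearly dependent. Clearing denominators in such a dependence produces polynomials $H_0,\ldots,H_t\in\mathbb{C}[n]$, not all zero, with $\sum_{k=0}^t H_k(n)\,b(n+k)=0$; taking $m$ to be the largest index with $H_m\neq 0$ yields the required recurrence of order $m\leq t$.

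For the singularity claim I would exploit the identity
\[
\sum_{n\geq 0} a(qn)\,w^{qn} \;=\; \frac{1}{q}\sum_{j=0}^{q-1}\psi(\omega^j w),
\]
where $\omega=e^{2\pi i/q}$; both sides agree on a disk around $w=0$. Substituting $z=w^q$ rewrites the left side as $B(z):=\sum_n b(n)z^n$, while the right side is an analytic combination of shifts of $\psi$. The $w$-singularities of the right side occur precisely at points $w_0$ with $\omega^j w_0$ a singularity of $\psi$ for some $j$, so any $z$-singularity of $B$ must sit at $z_0=w_0^q=(\omega^{-j}s)^q=s^q$ for some singularity $s$ of $\psi$. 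The main obstacle is making this last step rigorous: the formal identity is easy, but one must verify that $B(z)$ extends as a single-valued analytic function off $\{s^q\}$. This reduces to observing that the sum on the right is invariant under $w\mapsto\omega w$ (which just permutes the summands), so it genuinely depends only on $z=w^q$, and the choice of $q$-th root is irrelevant. Once single-valuedness is confirmed, the containment of singularities follows by taking images under $w\mapsto w^q$.
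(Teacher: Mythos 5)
The paper itself does not prove this lemma---it is quoted from B\'ezivin without proof---so there is no in-paper argument to compare against; I will assess the proposal directly. Your derivation of the recurrence for $b(n)=a(qn)$ is correct: over $\mathbb{C}(n)$ every shift $a(n+k)$ lies in the span of $a(n),\dots,a(n+t-1)$, so after substituting $n\mapsto qn$ the $t+1$ sequences $b(n),\dots,b(n+t)$ are $\mathbb{C}(n)$-linearly dependent, and clearing denominators gives polynomial coefficients with $m\leq t$; the relation holds for $n$ large, which is the standard convention for $P$-recursive sequences.

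The singularity argument, however, aims at the wrong target. You write that one must show $B(z)$ extends as a \emph{single-valued} analytic function off $\{s^q\}$, but that is false in general. Take $\psi(z)=(1-z)^{1/2}$ and $q=2$: then $B(w^2)=\frac{1}{2}\left[(1-w)^{1/2}+(1+w)^{1/2}\right]$, and a small loop around $z=1$, lifted to a loop around $w=1$, flips the sign of the first radical while leaving the second unchanged, so $B$ is genuinely multivalued on $\mathbb{C}\setminus\{1\}$. A $D$-finite function generically has branch points, so single-valuedness cannot be expected, and the invariance $\Phi(\omega w)=\Phi(w)$ of the \emph{initial} germ does not propagate to global single-valuedness of a continuation. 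The statement one actually needs is weaker: $B$ must admit analytic continuation along every path in $\mathbb{C}^{*}\setminus\{s_i^q\}$. To get this, lift such a path under the covering $w\mapsto w^q$ from $\mathbb{C}^{*}\setminus\{\omega^{-j}s_i\}$ onto $\mathbb{C}^{*}\setminus\{s_i^q\}$, continue $\Phi(w)=\frac{1}{q}\sum_{j}\psi(\omega^{j}w)$ along the lift, and at each point push the resulting germ down through the local biholomorphism $w\mapsto w^q$ (which is invertible away from $w=0$); this produces a continuous family of germs in $z$ agreeing with $B$ at the start, i.e.\ an analytic continuation of $B$. The relation $\Phi_{\gamma}(w)=\Phi_{\omega\gamma}(\omega w)$, which propagates the initial invariance, is what makes the push-down independent of which lift you chose. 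Your identity and the covering picture are exactly the right ingredients, but the framing in terms of single-valuedness must be replaced by this continuability argument.
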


\begin{lemma}[B\'ezivin \cite{Bez1}]\label{B3.1} Let $a,b:\B{N}\to\B{C}$ be two functions satisfying the recurrences $$\sum_{i=0}^t P_i(n)a(n+i)=0\qquad\mbox{and}\qquad \sum_{j=0}^s Q_j(n)b(n+j)=0$$ for all $n$, where $P_i(z),Q_j(z)\in\B{C}[z]$ with $P_t(z)$ and $Q_s(z)$ nonzero. Then $c(n):=a(n)+b(n)$ satisfies a relation of the same form $$\sum_{k=0}^m H_k(n)c(n+k)=0,$$ where $H_k(z)\in\B{C}[z]$, $H_m(z)$ is nonzero, and $m\leq s+t$, and also the function $d(n):=a(n)b(n)$ satisfies a relation of the same form $$\sum_{k=0}^r M_k(n)d(n+k)=0,$$ where $M_k(z)\in\B{C}[z]$, $M_r(z)$ is nonzero, and $r\leq st$.
\end{lemma}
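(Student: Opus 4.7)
The plan is to realize each P-recursive sequence as living in a finite-dimensional vector space of shifts over $\B{C}(n)$, and then exploit the fact that sums of vector spaces have dimension bounded by the sum of dimensions, while tensor (or pointwise) products have dimension bounded by the product of dimensions. Concretely, for a sequence $a$ satisfying $\sum_{i=0}^t P_i(n)a(n+i)=0$ with $P_t$ nonzero, let $W_a$ denote the $\B{C}(n)$-linear span of the shifted sequences $(a(n+i))_n$ for $i\geq 0$, where $\B{C}(n)$ acts on tails of sequences (the rational functions are defined for all but finitely many $n$, so this action is well-defined modulo eventual equality). The recurrence, divided by $P_t(n)$, expresses $a(n+t)$ as a $\B{C}(n)$-linear combination of $a(n), a(n+1), \ldots, a(n+t-1)$; by a straightforward induction on $k$, every $a(n+k)$ lies in the $\B{C}(n)$-span of $a(n),\ldots,a(n+t-1)$, so $\dim_{\B{C}(n)} W_a \leq t$. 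Symmetrically, $\dim_{\B{C}(n)} W_b \leq s$.

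For the sum $c(n)=a(n)+b(n)$, each shift $c(n+k)$ lies in $W_a+W_b$, which has $\B{C}(n)$-dimension at most $s+t$. Hence the $s+t+1$ shifts $c(n), c(n+1), \ldots, c(n+s+t)$ must be linearly dependent over $\B{C}(n)$, giving a relation $\sum_{k=0}^{s+t} h_k(n) c(n+k)=0$ with $h_k \in \B{C}(n)$ not all zero. Clearing denominators by multiplying through by a common multiple yields a polynomial recurrence $\sum_{k=0}^{s+t} H_k(n) c(n+k) = 0$ with $H_k \in \B{C}[n]$ not all zero, valid for all sufficiently large $n$. By shrinking the order if the top coefficient vanishes (keep only the highest $k$ with $H_k \neq 0$ and relabel), we get an order $m \leq s+t$ recurrence with $H_m$ nonzero. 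To get the recurrence for all $n\in\B{N}$ rather than just large $n$, observe that both sides are polynomial identities in $n$ evaluated on sequences, and any discrepancy on finitely many initial indices can be absorbed by multiplying by an extra factor $\prod(n-n_0)$, which does not change the order or the nonvanishing of the leading coefficient after relabeling.

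For the product $d(n)=a(n)b(n)$, let $W$ be the $\B{C}(n)$-linear span inside the sequence module of the pointwise products $(a(n+i)b(n+j))_n$ as $i,j$ range over all nonnegative integers. Since each $a(n+i)$ is a $\B{C}(n)$-combination of $a(n),\ldots,a(n+t-1)$ and each $b(n+j)$ is a $\B{C}(n)$-combination of $b(n),\ldots,b(n+s-1)$, every such product lies in the span of the $st$ elements $\{a(n+i)b(n+j): 0\leq i<t,\ 0\leq j<s\}$, so $\dim_{\B{C}(n)} W \leq st$. In particular $d(n+k)=a(n+k)b(n+k)\in W$ for every $k$, so the $st+1$ shifts $d(n),d(n+1),\ldots,d(n+st)$ are linearly dependent over $\B{C}(n)$, and clearing denominators and truncating as above produces the desired polynomial recurrence of order $r\leq st$ with $M_r$ nonzero. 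The main obstacle is the bookkeeping around dividing by polynomials that can vanish at finitely many integers and ensuring the recurrence holds for every $n\in\B{N}$, not just eventually; this is handled uniformly by clearing denominators and, if necessary, absorbing the finitely many bad indices into an auxiliary polynomial factor in the leading coefficient, which preserves the order bound and the nonvanishing of the top coefficient.
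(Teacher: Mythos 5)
Your argument is correct. Note, however, that the paper does not prove this lemma at all: it is quoted verbatim from B\'ezivin's article (as Lemma 3.1 there) and used as a black box, so there is nothing in the paper to compare against line by line. What you have written is the standard closure argument for $P$-recursive (holonomic) sequences, essentially the proof in Stanley's \emph{Enumerative Combinatorics}, Vol.~2, Theorem 6.4.9: the shifts of $a$ span a $\B{C}(n)$-vector space of dimension at most $t$ (working with germs of sequences at infinity, so that division by $P_t(n)$ is legitimate), likewise for $b$, and then $c(n+k)$ lives in a sum of dimension at most $s+t$ while $d(n+k)$ lives in the span of the $st$ products $a(n+i)b(n+j)$, forcing a linear dependence among $s+t+1$ (resp.\ $st+1$) consecutive shifts; clearing denominators gives polynomial coefficients. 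The one point that genuinely requires care is that the resulting recurrence a priori holds only for all sufficiently large $n$, whereas the statement asserts it for all $n$; your fix of multiplying through by $\prod(n-n_0)$ over the finitely many bad indices is valid and preserves both the order bound and the nonvanishing of the leading coefficient. I would only add that you should state explicitly that the $\B{C}(n)$-module you work in is the module of sequences modulo eventual equality (so that rational functions with poles at integers act sensibly); you gesture at this with ``tails of sequences,'' and with that made precise the proof is complete.
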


Lemma \ref{B2.2} was originally given in \cite[p.~137]{Bez2}, and is also stated as Lemma 2.2 of \cite{Bez1}. Lemma \ref{B3.1} was stated as Lemma 3.1 of \cite{Bez1}.

We will also need the following Lemmas from \cite{Bez1} in slightly different forms from how they were originally stated. Lemma \ref{B3.2} and Lemma \ref{B3.3} were originally stated as Lemmas 3.2 and 3.3 of \cite{Bez1}.

\begin{lemma}[B\'ezivin \cite{Bez1}]\label{B3.2} Suppose that $f:\B{N}\to\B{C}$ is a multiplicative function that is $P$--recursive, satisfying the recurrence $$\sum_{i=0}^t P_i(n)f(n+i)=0,$$ where $P_i(z)\in\B{C}[z]$ and $P_t(z)$ is nonzero. Set $N=(2t+1)!$ and let $q$ be an integer coprime to $N$. Then we have $$f(nq)=f(n)f(q)$$ for all $n$ sufficiently large.
\end{lemma}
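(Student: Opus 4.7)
The plan is to set $g(n) := f(nq) - f(n)f(q)$ and show that $g(n) = 0$ for all sufficiently large $n$. By multiplicativity of $f$, one has $g(n) = 0$ whenever $\gcd(n,q) = 1$ for free, so the entire task is to extend vanishing from the coprime-to-$q$ subset to every large $n$.

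The first step is to certify that $g$ is $P$-recursive of small order. By Lemma \ref{B2.2} the sequence $n \mapsto f(nq)$ satisfies a polynomial recurrence of order at most $t$, and $n \mapsto f(n) f(q)$ is a scalar multiple of $f$ and hence satisfies the same order-$t$ recurrence as $f$. Applying Lemma \ref{B3.1} to the difference, there exist $H_0(z), \ldots, H_m(z) \in \B{C}[z]$ with $H_m \not\equiv 0$ and $m \leq 2t$ such that
$$\sum_{k=0}^{m} H_k(n)\, g(n+k) \ = \ 0$$
for every $n$ sufficiently large.

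The core of the argument is then to locate a single $n_0$ from which this recurrence propagates vanishing. I would choose $n_0$ large enough so that simultaneously (i) the recurrence is valid for all $n \geq n_0$, (ii) $H_m(n) \neq 0$ for every $n \geq n_0$ (possible since $H_m$ has only finitely many roots), and (iii) each of $n_0, n_0+1, \ldots, n_0+m-1$ is coprime to $q$. Once (i)--(iii) hold, $g$ vanishes on a window of length $m$, and solving the recurrence for $g(n_0 + m)$ using (ii) gives $g(n_0 + m) = 0$; iterating gives $g(n) = 0$ for every $n \geq n_0$, which is exactly the identity $f(nq) = f(n) f(q)$.

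The only substantive ingredient is condition (iii), and this is precisely where the hypothesis $\gcd(q,(2t+1)!) = 1$ enters. Every prime $p$ dividing $q$ satisfies $p > 2t+1 \geq m$, so within any block of $m$ consecutive integers at most one can be a multiple of $p$, and the condition ``$n \not\equiv 0, -1, \ldots, -(m-1) \pmod{p}$'' excludes only $m < p$ residues and leaves at least one safe residue modulo $p$. The Chinese Remainder Theorem applied across the finitely many prime divisors of $q$ then yields a positive-density set of $n$ meeting (iii); this set automatically intersects the cofinite set imposed by (i) and (ii), producing $n_0$. The main obstacle in writing this out is merely the bookkeeping that ensures (i)--(iii) can be achieved simultaneously; given the clean bound $m \leq 2t$ strictly less than every prime divisor of $q$, the CRT density estimate is routine.
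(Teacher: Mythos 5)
Your proposal is correct and follows essentially the same route as the paper: set $u(n)=f(nq)-f(n)f(q)$, obtain a recurrence of order $m\leq 2t$ via Lemmas \ref{B2.2} and \ref{B3.1}, locate a window of consecutive integers all coprime to $q$ (so $u$ vanishes there by multiplicativity), and propagate forward using $H_m(n)\neq 0$. The only cosmetic difference is that the paper produces the window directly as $\{1+hq,\ldots,(2t+1)+hq\}$ --- coprime to $q$ since the residues mod $q$ lie in $\{1,\ldots,2t+1\}$ and $\gcd(q,(2t+1)!)=1$ --- whereas you invoke CRT over the prime divisors of $q$ to the same effect.
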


\begin{proof} Set $u(n)=f(nq)-f(n)f(q).$ By Lemma \ref{B2.2} and Lemma \ref{B3.1}, the function $u(n)$ satisfies a recurrence of the form $$\sum_{i=0}^m H_i(n)u(n+i)=0,$$ where $H_i(z)\in\B{C}[z]$, $H_m(z)$ is nonzero, and $m\leq 2t$.

Let $n$ be an integer of the form $n=k+hq$ with $1\leq k\leq 2t+1$ and $h\in\B{N}$. Then $n$ is coprime to $q$ by the above hypotheses.

Thus by the multiplicativity of the function $f(n)$, we have that $u(n)=0$ for all such integers $n$.

Now let $h$ be a large enough integer so that for $n\geq 1+hq$ we have $H_m(n)\neq 0$.  Thus we have for $m\leq 2t$ that $$u(1+hq)=\cdots=u(m+1+hq)=0.$$ The recurrence relation and the above hypothesis on $h$ thus implies $u(k+hq)=0$ for all $k\geq 1$, which proves the lemma.
\end{proof}

\begin{lemma}[B\'ezivin \cite{Bez1}]\label{B3.3} Suppose that $F(z)=\sum_{n\geq 1}f(n)z^n\in\B{C}[[z]]$ is a $D$--finite power series with multiplicative coefficients, and suppose that $f(n)$ is not eventually zero. Then there is a constant $P_0$ such that $f(p^k)\neq 0$ for all primes $p\geq P_0$ and all $k\in\B{N}$.
\end{lemma}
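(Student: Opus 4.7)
The plan is to argue by contradiction. Suppose there exist infinitely many primes $p$ coprime to $N := (2t+1)!$ and, for each such $p$, an exponent $k(p) \geq 1$ with $f(p^{k(p)}) = 0$. My first move will be to show that this forces $f(p) = 0$ itself for infinitely many such primes; I will then exploit those prime zeros together with the $P$-recursion to derive that $f$ is eventually zero, contrary to hypothesis.

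For the first step, I will upgrade Lemma~\ref{B3.2} into a statement of complete multiplicativity on prime powers coprime to $N$; namely that $f(p^k) = f(p)^k$ for every prime $p$ coprime to $N$ and every $k \geq 1$. This will follow by applying Lemma~\ref{B3.2} in two ways: once with $q = p^k$ (which is still coprime to $N$) to get $f(n p^k) = f(n) f(p^k)$ for $n$ past some threshold $M(p^k)$, and iteratively with $q = p$ to get $f(n p^k) = f(n) f(p)^k$ for $n$ past $M(p)$. Because $f$ is not eventually zero, I can pick $n$ large enough to satisfy both thresholds and still have $f(n) \neq 0$; comparing the two expressions then yields $f(p^k) = f(p)^k$. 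In particular, $f(p^{k(p)}) = 0$ forces $f(p) = 0$, and by multiplicativity $f(n) = 0$ whenever some $p$ in the resulting infinite bad set $\mathcal{P}$ divides $n$.

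For the second step, I will use the Chinese Remainder Theorem to manufacture $t$ consecutive zeros of $f$ arbitrarily far out: pick distinct primes $p_1, \ldots, p_t \in \mathcal{P}$ and find arbitrarily large $n$ with $p_i \mid n + i$ for $i = 1, \ldots, t$, so that $f(n+1) = \cdots = f(n+t) = 0$. Choosing $n$ past the finitely many zeros of the leading polynomial $P_t(z)$ of the original recurrence $\sum_{i=0}^{t} P_i(m) f(m+i) = 0$, applying the recurrence at $m = n+1$ gives $P_t(n+1) f(n+t+1) = 0$, hence $f(n+t+1) = 0$. Sliding the window of $t$ consecutive zeros forward and iterating yields $f(m) = 0$ for all $m \geq n+1$, contradicting the hypothesis that $f$ is not eventually zero.

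The main obstacle is Step~1, since the threshold $M(q)$ in Lemma~\ref{B3.2} depends on the modulus $q$ and we need to apply it with two different choices; the critical input that makes the comparison legitimate is the non-triviality hypothesis on $f$, which guarantees arbitrarily large $n$ with $f(n) \neq 0$ lying above both thresholds simultaneously.
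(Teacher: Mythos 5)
Your proof is correct, but it arranges the argument differently from the paper in the part where you manufacture $t$ consecutive zeros of $f$.

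The paper's proof handles the problem of exact divisibility directly at the level of the Chinese Remainder Theorem: given infinitely many prime powers $p_i^{k_i}$ with $f(p_i^{k_i})=0$, it solves $N\equiv -i+p_i^{k_i}\pmod{p_i^{k_i+1}}$, so that $p_i^{k_i}$ divides $M+i$ \emph{exactly} (and no higher power of $p_i$ does). Then $f(M+i)=f(p_i^{k_i})\,f\bigl((M+i)/p_i^{k_i}\bigr)=0$ follows from ordinary multiplicativity alone, and the recurrence kills $f$ thereafter. Notice this route does not invoke Lemma~\ref{B3.2} at all, and it needs the ``not eventually zero'' hypothesis only as the thing being contradicted.

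You instead first upgrade Lemma~\ref{B3.2} to the statement $f(p^k)=f(p)^k$ for $p$ coprime to $(2t+1)!$, by comparing the identity $f(np^k)=f(n)f(p^k)$ (taking $q=p^k$) with the iterated identity $f(np^k)=f(n)f(p)^k$ (taking $q=p$ repeatedly) at a large $n$ where $f(n)\neq 0$; this is where you use the non-triviality hypothesis an extra time, and it is essential there. That reduces the infinite bad set to actual primes with $f(p)=0$, whence $f(p^a)=0$ for all $a\geq 1$, and the CRT step can be done merely modulo $p_i$ rather than modulo $p_i^{k_i+1}$. The tail of the argument (sliding a window of $t$ consecutive zeros past the roots of $P_t$) is the same in both proofs. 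Your version extracts a slightly stronger intermediate fact (complete multiplicativity of $f$ on prime powers coprime to $N$, and hence that any prime-power vanishing forces vanishing at the prime itself) at the cost of an extra appeal to Lemma~\ref{B3.2}; the paper's version is leaner and stays at the level of plain multiplicativity by choosing a sharper CRT modulus. Both are valid.
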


\begin{proof} Let $$\sum_{i=0}^t P_i(n)f(n+i)=0$$ be the recurrence relation satisfied by $f(n)$, where $P_i(z)\in\B{C}[z]$ with $P_t(z)$ nonzero. Towards a contradiction, suppose there exists an infinite set of prime powers $p_i^{k_i}$ such that $f(p_i^{k_1})=0$.

Let $N$ be a solution to the system of congruences $$N\equiv -i+p_i^{k_i}\ (\bmod\ p_i^{k_i+1})\ \ 1\leq i\leq t.$$

Let $h\in\B{N}$ and set $$M=N+hp_1^{k_1+1}\cdots p_t^{k_t+1}.$$ For all values of $i$ and each choice of $h$, the integer $M+i$ is, for all $i=1,\ldots,t$, divisible by $p_i^{k_i}$ but not by any power larger than $k_i$. From the multiplicativity of $f(n)$, we have that $f(M+i)=0$ for $i=1,\ldots,t$. 

If we choose $h$ large enough, we will have that $P_t(n)\neq 0$ for all $n\geq M$.

Utilizing the recurrence relation, we have that $f(n)$ is zero for all $n\geq M+1$. This proves the result.
\end{proof}

We are now in a position to give the proof of Lemma \ref{Bez3.5}. This is given as Lemma 3.5 in \cite{Bez1}.

\begin{proof}[Proof of Lemma \ref{Bez3.5}] Let $q$ be an integer satifying the conditions of Lemma \ref{B3.2}. We will suppose that $q$ is chosen so that $f(q)\neq 0$, which is possible in virtue of Lemma \ref{B3.3}.

By the equality $f(nq)=f(n)f(q)$ for large enough $n$, we have that $$\sum_{n\geq 1}f(nq)z^n=f(q)\sum_{n\geq 1}f(n)z^n+T(z)$$ where $T(z)\in\B{C}[z]$.

Let $\omega$ be a singularity of $g(z)=\sum_{n\geq 1}f(n)z^n$. By the preceding equality, $\omega$ is also a singularity of $\sum_{n\geq 1}f(nq)z^n$. By Lemma \ref{B2.2}, $\omega$ is a $q$--th power of a singularity $\omega'$ of $g(z)$. Because there are only finitely many singularities of $g(z)$, $\omega$ must be a root of unity.
\end{proof}


\bibliographystyle{amsplain}
\providecommand{\bysame}{\leavevmode\hbox to3em{\hrulefill}\thinspace}
\providecommand{\MR}{\relax\ifhmode\unskip\space\fi MR }
\providecommand{\MRhref}[2]{%
  \href{http://www.ams.org/mathscinet-getitem?mr=#1}{#2}
}
\providecommand{\href}[2]{#2}

\end{document}